%
%
%
%
%
%
%
%
%
%
%

%
%
%
%
%
%
%
%

\documentclass[11pt]{amsart}
\usepackage{pstricks,pst-plot}

\definecolor{Black}{cmyk}{0,0,0,1}
\definecolor{OrangeRed}{cmyk}{0,0.6,1,0}            
\definecolor{DarkBlue}{cmyk}{1,1,0,0.20}
\definecolor{myblue}{rgb}{0.66,0.78,1.00}
\definecolor{Violet}{cmyk}{0.79,0.88,0,0}
\definecolor{Lavender}{cmyk}{0,0.48,0,0}

\parskip=\smallskipamount

\newtheorem{theorem}{Theorem}[section]
\newtheorem{lemma}[theorem]{Lemma}
\newtheorem{corollary}[theorem]{Corollary}
\newtheorem{proposition}[theorem]{Proposition}

\theoremstyle{definition}
\newtheorem{definition}[theorem]{Definition}

\newtheorem{remark}[theorem]{Remark}

\newcommand{\Aut}{\mathop{{\rm Aut}}}

\numberwithin{equation}{section}

%
%
%
%

\begin{document}
\title[Hyperbolic laminations]{Solving $\overline\partial_b$ on hyperbolic laminations}
%
%
\subjclass[2000]{}
\date{\today}
\keywords{}

\subjclass[2000]{}
\date{\today}
\keywords{}


\maketitle

\centerline{ J. E. FORN\AE SS\footnote{The first author is supported by an NSF grant DMS-1006294.\\
Keywords: Cauchy-Riemann equations, Levi flat manifolds, Kobayashi hyperbolicity, Riemann surface foliations.\\
2010 AMS classification. Primary: 32V20, 32W10; Secondary: 32F45} and E. F. WOLD}

\begin{abstract}
Let $X$ denote a compact set which is laminated by Riemann surfaces. We assume
that $X$ carries a positive CR line bundle $ L\rightarrow X$. The main result of the paper is that there exists a positive integer $s$ so that if $v$ is any continuous $(0,1)$ form with coefficients in $L^{\otimes s}$ there exists a continuous 
section $u$ of $L^{\otimes s}$ solving the equation  $\overline\partial_b u=v$.
\end{abstract}

\maketitle

\section{Introduction}

The Cauchy-Riemann equations or the
$\overline{\partial}$ equation are among  the most important tools in complex analysis. This is true in one complex dimension as well as in several complex variables. On CR manifolds one has similarly
the tangential Cauchy-Riemann equations. In this paper we will study the special case of compact CR manifolds which are Leviflat and foliated by Riemann surfaces.  
In this case the tangential Cauchy-Riemann equations reduce to the $\overline{\partial}$ equation on the individual leaves.
Since the manifolds are compact one cannot expect to solve the $\overline\partial$-equation for $(0,1)$-forms in general, and the natural thing is to consider sections of  
positive line bundles over the manifolds.   Then of course, by classical theory, we may solve $\overline\partial$ on each individual leaf  - the difficulty is to obtain transversal regularity, \emph{i.e.}, that the solutions vary nicely when you compare nearby leaves.  
Our main result is: 

\begin{theorem}\label{main}
Let $X$ be a compact hyperbolic Riemann surface lamination with a CR line bundle $L\rightarrow X$, and assume that $L$ is equipped with a positive metric $\sigma$.  Then there exists an integer $s\in\mathbb N$ such that 
for any continuous $(0,1)$-form $v$ with coefficients in $L^{\otimes s}$,  there exists 
a  continuous section $u$ of $L^{\otimes s}$ solving $\overline\partial_b u=v$.
\end{theorem}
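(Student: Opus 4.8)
The plan is to build the solution leaf‑by‑leaf using the canonical (minimal $L^2$, or rather the Poincaré–Bergman) solution operator on each leaf, and then to prove that this operator depends continuously on the leaf. Fix the leafwise Poincaré metric $\omega_P$ coming from hyperbolicity of the lamination; on each leaf $L_x$ the pair $(\omega_P, \sigma^{\otimes s})$ makes the $L^2$ Kodaira–Nakano estimate available once $s$ is large enough that the curvature of $\sigma^{\otimes s}$ dominates the Ricci form of $\omega_P$ (this is where positivity of $\sigma$ and compactness of $X$ — giving uniform bounds on the curvature of $\sigma$ and a uniform lower bound on the leafwise injectivity radius — enter). For such $s$, on each leaf there is a bounded solution operator $v\mapsto u_x$ with $u_x$ the $L^2$‑minimal solution of $\overline\partial u_x = v|_{L_x}$, and the $L^2$ bound is uniform in $x$.

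First I would set up the functional‑analytic framework on a single hyperbolic Riemann surface: given a continuous, hence bounded with respect to the Poincaré metric (the $(0,1)$‑form $v$ contracted against $\omega_P$ has a sup‑norm that is finite because $\omega_P$ blows up near the "boundary" directions), $(0,1)$‑form valued in $L^{\otimes s}$, solve $\overline\partial u = v$ with the $L^2(\omega_P,\sigma^{\otimes s})$ estimate, then upgrade to continuity of $u$ by interior elliptic regularity (the Cauchy transform / $\overline\partial$‑Poincaré lemma on coordinate discs, bootstrapped against the $L^2$ bound). Second, I would transfer this to the lamination: cover $X$ by finitely many flow boxes $U_i \cong \bD \times T_i$; in each flow box write the leafwise equation as a family, parametrized by $t\in T_i$, of $\overline\partial$‑equations on $\bD$ with coefficients depending continuously on $t$. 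The key analytic input is that the $L^2$‑minimal solution on a leaf, when restricted to a relatively compact subdisc of a flow box, varies continuously in $t$: this follows from continuity of the data together with the uniform $L^2$ estimate and the fact that the orthogonal projection onto $\ker\overline\partial$ (i.e. onto leafwise holomorphic sections) varies continuously — the latter because a normal‑families/Montel argument shows that leafwise holomorphic sections with a uniform $L^2$ bound form a locally compact family, and the reproducing (Bergman‑type) kernel depends continuously on the leaf.

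The main obstacle is precisely the transversal continuity of the solution, and within that, the continuity of the Bergman projection across leaves. The difficulty is that different leaves are genuinely different Riemann surfaces (some may be discs, some annuli, some of infinite type, and nearby leaves need not be biholomorphic), so one cannot simply pull everything back to a fixed model; one must argue intrinsically. I would handle this by a diagonal/compactness argument: suppose $t_n\to t_0$ but the solutions $u_{t_n}$ do not converge locally uniformly to $u_{t_0}$; using the uniform $L^2$ bound and elliptic estimates, pass to a subsequence along which $u_{t_n}$ converges locally uniformly (and in $L^2_{\rm loc}$) to some $u_\infty$ solving $\overline\partial u_\infty = v|_{L_{t_0}}$, then show $u_\infty - u_{t_0}$, being leafwise holomorphic and a limit of $L^2$‑minimal solutions, must itself be orthogonal to $\ker\overline\partial$ on $L_{t_0}$, hence zero — a contradiction. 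Making "limit of $L^2$‑minimal solutions is $L^2$‑minimal" precise requires lower semicontinuity of the $L^2$ norm under the relevant convergence plus the continuity of the holomorphic subspaces, which is the technical heart of the paper. Once transversal continuity is established inside each flow box, a partition of unity in the transverse variable glues the local solutions — the overlaps differ by leafwise holomorphic sections, which are continuous — and a final application of the solution operator to correct the $\overline\partial$ of the glued object (an error term that is $\overline\partial$‑exact on each leaf, continuous, and supported in a controlled way) yields the global continuous section $u$ with $\overline\partial_b u = v$.
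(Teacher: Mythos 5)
Your overall scheme (leafwise minimal $L^2$ solutions plus transverse continuity of the solution operator) points in the right direction, but it has two gaps that the paper's actual argument is specifically designed to get around, and your proposal does not close them.

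First, ``the $L^2$-minimal solution of $\overline\partial u_x=v|_{\mathcal L_x}$'' is generally not defined: the leaves of a compact hyperbolic lamination are non-compact and typically dense, hence of infinite Poincar\'e area, so a merely bounded continuous form $v$ restricted to a leaf need not lie in $L^2$ of that leaf, and the uniform $L^2$ bound on which your entire continuity argument rests does not exist. The paper avoids this by never solving globally on a leaf: it lifts to the universal cover $\mathbb D$ (using the continuity of the Kobayashi metric, i.e.\ Candel's theorem, to make the covering maps $f_t$ vary continuously), cuts $v$ into pieces compactly supported in small Kobayashi disks around a transversal, moves each of the infinitely many lifts of each piece to the origin by a disk automorphism, solves there with the auxiliary weight $s\psi$, $\psi=\log(1-|\zeta|^2)$, and sums. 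The tensor power $s$ exists precisely so that $dd^c(\sigma_t+s\psi)$ stays positive; the weight $s\psi$ then forces geometric decay of the individual solutions (via the counting and derivative estimates for automorphisms in Section 3), which is what makes the infinite equivariant sum converge. Your proposal assigns $s$ only the role of dominating the Ricci form, which is not the mechanism that makes the construction work.

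Second, your transverse-continuity argument via a normal-families limit of Bergman projections cannot be run as stated: nearby plaques in a flow box may lie on the \emph{same} dense leaf, or on genuinely different Riemann surfaces whose spaces of $L^2$ holomorphic sections have no a priori relation, so there is no sense in which ``a limit of $L^2$-minimal solutions is $L^2$-minimal'' on the limiting leaf. In particular, for a dense leaf the assertion that a single minimal solution restricts continuously to a transversal is a nontrivial constraint relating values of $u$ at points that are close in $X$ but arbitrarily far apart in the leaf; it does not follow from interior elliptic estimates. The paper's substitute is to reduce, via the covering maps, to a family of line bundles over the \emph{fixed} disk and to prove a quantitative continuity theorem for minimal solutions of such families (Theorem \ref{continuous}, via the exhaustion $\mathbb D(n)$ and a perturbation lemma for close metrics), together with an explicit equivariance check guaranteeing that the summed solution descends to the quotient -- an issue (consistency of the solution at distinct transversal points of the same leaf) that your gluing step does not address at all.
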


In  Xiaoai Chai \cite{Chai}, the analogous result was proved in general for the equation $du/dx=v$ for arbitrary foliations by real curves.  \

We will prove a stronger version of this theorem,  for the special case of a suspension over a compact Riemann surface, and as an application we will prove the following: 

\begin{theorem}\label{furstenberg}
There exists a $\mathcal C^1$-smooth hyperbolic minimal Riemann surface lamination in 
$\mathbb P^5$
with uncountably many extremal closed laminated currents which are mutually disjoint.  
\end{theorem}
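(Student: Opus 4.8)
\emph{Idea of proof.} The plan is to build $X$ concretely as a suspension over a compact hyperbolic Riemann surface whose transverse dynamics is a Furstenberg skew product of the two–torus, and then to use the suspension strengthening of Theorem~\ref{main} to realize this abstract lamination $\mathcal{C}^1$–smoothly inside $\P^5$. Fix a compact Riemann surface $\Sigma$ of genus at least $2$, so $\wt\Sigma=\D$ and there is a surjection $\rho_0\colon\pi_1(\Sigma)\to\Z$. Let $F\colon\bT^2\to\bT^2$, $F(x,y)=(x+\alpha,y+\phi(x))$, be a Furstenberg example — a $\mathcal{C}^\infty$ skew product which is minimal but not uniquely ergodic — and let $\pi_1(\Sigma)$ act on $\bT^2$ through $\rho_0$, the generator of $\Z$ acting by $F$. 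Form the suspension $X=(\D\times\bT^2)/\pi_1(\Sigma)$, where $\pi_1(\Sigma)$ acts by deck transformations on $\D=\wt\Sigma$ and by the above action on $\bT^2$. Then $X$ is a compact $\mathcal{C}^\infty$ lamination by Riemann surfaces whose bundle projection $\pi\colon X\to\Sigma$ has fibre $\bT^2$; each leaf is biholomorphic to $\D/\ker\rho_0$, the $\Z$–cover of $\Sigma$, hence a hyperbolic Riemann surface, so $X$ is a hyperbolic lamination; and $X$ is minimal exactly because $F$ is. Pulling back along $\pi$ a positively curved ample line bundle on $\Sigma$ (for instance $K_\Sigma$, ample since $g\geq 2$) produces a positive CR line bundle $L\to X$.

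Next I would locate the currents. Holonomy–invariant transverse measures of $X$ coincide with $F$–invariant probability measures on $\bT^2$ (a measure is invariant under the holonomy group $\langle F\rangle$ iff it is $F$–invariant), and under Sullivan's identification of transverse invariant measures with closed foliated currents the ergodic ones correspond to the extremal closed laminated currents on $X$. Now the fibre rotations $R_t\colon(x,y)\mapsto(x,y+t)$ commute with $F$, hence act on the simplex of $F$–invariant measures. Since $F$ is not uniquely ergodic there is an ergodic $F$–measure $\mu$ different from Haar measure; its orbit $\{(R_t)_*\mu:t\in\bT\}$ then consists of ergodic measures which are pairwise distinct — for if $\mu$ were invariant under all fibre rotations it would be Haar — and therefore pairwise mutually singular. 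Thus $X$ carries uncountably many mutually singular extremal closed laminated currents. (By minimality every such current has support $X$, so ``mutually disjoint'' must be understood as mutual singularity, which is exactly what is produced.)

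Finally I would embed $X$ in $\P^5$ by a Kodaira–type argument. Invoke the stronger, suspension version of Theorem~\ref{main}, which yields not merely a continuous but a transversally $\mathcal{C}^1$ solution of $\overline\partial_b u=v$ for continuous $L^{\otimes s}$–valued $(0,1)$–forms $v$. For $s$ large one then takes leafwise–holomorphic local sections of $L^{\otimes s}$, multiplies them by $\mathcal{C}^\infty$ bump functions in the $\bT^2$–direction, and corrects the resulting approximate sections to genuine $\mathcal{C}^1$ sections; finitely many of these give a base–point–free family separating points of $X$ and separating leafwise tangent directions, hence a leafwise–holomorphic $\mathcal{C}^1$ embedding $X\hookrightarrow\P^N$. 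Because $\dim_{\R}X=4$, the union of the complex secant lines of $X$ inside $\P^N$ is a set of real dimension at most $4+4+2=10$, while the union of its leafwise tangent lines has real dimension at most $4+2=6$; hence for a generic linear subspace $C\cong\P^{N-6}$ of $\P^N$ the projection away from $C$ restricts to a $\mathcal{C}^1$ embedding of $X$ into $\P^5$. This yields the desired $\mathcal{C}^1$–smooth hyperbolic minimal Riemann surface lamination in $\P^5$ carrying uncountably many mutually disjoint extremal closed laminated currents.

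The main obstacle is this last step. The construction of the lamination and of its currents is soft once Furstenberg's example is available — the fibre–rotation trick is precisely what upgrades ``not uniquely ergodic'' to ``uncountably many mutually singular ergodic measures''. But to obtain an embedding of class $\mathcal{C}^1$, rather than merely $\mathcal{C}^0$, one genuinely needs the suspension refinement of Theorem~\ref{main} with honest transverse estimates, and one must check that the separation properties of the corrected sections, and the transversality/Sard input in the generic projection, all persist at this limited regularity.
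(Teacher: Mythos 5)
Your overall architecture is the same as the paper's: a suspension over a genus~$\geq 2$ surface carrying Furstenberg-type transverse dynamics, uncountably many mutually singular ergodic transverse invariant measures (hence extremal closed laminated currents), a Kodaira-type embedding using the suspension version of the $\overline\partial_b$ theorem, and a generic projection down to $\P^5$. The first half of your argument is in fact more self-contained than the paper, which simply cites \cite{FornaessSibonyWold} for the existence of such a minimal suspension with uncountably many mutually singular extremal currents; your fibre-rotation argument ($R_t$ commutes with $F$, $F$ not uniquely ergodic, and a measure invariant under all $R_t$ and under $F$ must be Haar) is a correct reconstruction of that input, and you are right that ``mutually disjoint'' must be read as ``mutually singular.''

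There is, however, a genuine gap in your embedding step. You propose to take local leafwise-holomorphic sections $\xi$ with the desired separating/immersive behaviour, cut them off, and ``correct'' them by solving $\overline\partial_b u=\overline\partial_b\xi$, but you give no mechanism forcing the correction $u$ to preserve the jets of $\xi$ at the points (or along the transversals) where separation is needed; a generic $L^2$-minimal solution will simply destroy those properties, and your closing remark that one ``must check that the separation properties persist'' acknowledges the problem without resolving it. The paper's device --- and the reason Theorem~\ref{suspension} is stated for \emph{possibly singular} metrics $\sigma_*$ --- is to solve $\overline\partial_b$ in $L^2$ of a weight $k\sigma+\tau$ where $\tau$ is the pullback of a quasi-subharmonic function on the base with Lelong number $2$ at the chosen points (for leafwise separation and differentials) or singular along a chosen transversal $\mathbb T_x$ (for the transverse direction), after raising the bundle to a power $k$ with $kc_2>c_1+4s$ so that positivity survives the singular term. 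Local integrability then forces $u$ to vanish to order two at the prescribed locus, so $s_j=\xi_j-u_j$ inherits the value and differential of $\xi_j$ there, and the transverse $\mathcal C^1$ regularity of Theorem~\ref{suspension} propagates the separation to a neighbourhood. Without this (or an equivalent) device your sections need not be base-point-free, need not separate points, and need not immerse the transversals, so the embedding into $\P^N$ --- and hence the projection to $\P^5$ --- is not established as written.
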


The theorem is proved by solving $\overline\partial_b$ with \emph{smooth} transverse regularity 
for suspensions (see (5.1)), and thereby obtaining an embedding theorem \'{a} la Kodaira, applied to a
suspension considered in \cite{FornaessSibonyWold}.  Related to such an embedding result, 
Ghys \cite{Ghys2} and Deroin \cite{Deroin} have shown that meromorphic functions and projective maps separate points on  these laminations (see also Gromov \cite{Gromov}).  

Theorem \ref{furstenberg} is in strong contrast to the situation in $\mathbb P^2$ where any such
lamination supports a unique normalized $\partial\overline\partial$-closed 
laminated current \cite{FornaessSibony}.  We will discuss this in Section \ref{embedding}.\

In a sequel to this paper, we will discuss further applications, and also non hyperbolic laminations, as well as laminations without positive CR bundles. \\

We next describe the plan of the paper.
In Section 2 we discuss the Kobayashi metric $K_x$ on leaves of a hyperbolic foliation $X$, 
and we give a new proof of the upper semi continuity of $K_x$.  
Then in Section 3 we show some useful facts for the unit disc. In Section 4 we discuss families of positive line bundles over the unit disc and prove continuity estimates for $\overline{\partial}$ with values in these line bundles. The line bundles are trivial but the metric varies quite strongly.  In Section 5 we will prove the main theorem for a specific example: the case of a suspension over a compact Riemann surface. 
The proof will give the main ideas for the general case, it will reveal the further need for the results in section 2, but the situation being somewhat simpler than the general case, we will not need section 4.   We also obtain stronger transverse regularity in this special case.  In Section \ref{embedding} we prove Theorem \ref{furstenberg} via an embedding result \'{a} la Kodaira.  Finally we prove the main theorem solving $\overline\partial_b$ in Section 7.

The Cauchy-Riemann equations have been discussed in the Levi flat Riemann surface case before by several authors, see \cite{Cao-Shaw-Wang}, \cite{Iordan-Matthey}, \cite{Ohsawa-Sibony}, \cite{Siu} and references therein.

\section{Hyperbolic laminations and the Kobayashi Metric}

We first define what we mean by a Riemann surface lamination. Let $X$ be a topological space
with an open cover $\{U_\alpha\}_{\alpha \in A}.$ We assume that for each $\alpha$ there is a homemorphism $\phi_\alpha: U_\alpha\rightarrow\mathbb D(z_\alpha) \times T_\alpha(t_\alpha)$ where $\mathbb D$ is the open unit disc in $\mathbb C$ and $T_\alpha$ is a metrizable topological space. Moreover the
maps $\phi_{\beta\alpha}:=\phi_\beta \circ \phi^{-1}_\alpha$ have locally the form
$\phi_{\beta\alpha}(z_\alpha,t_\alpha)= (z_{\beta\alpha}(z_\alpha,t_\alpha),t_{\beta\alpha}(t_\alpha))$
where the function $z_\beta$ is holomorphic as a function of $z_\alpha$ for fixed $t_\alpha.$

The sets $U_\alpha$ are called flow boxes. The sets $\mathcal L_{\alpha,t_\alpha}:=\phi_\alpha^{-1}(\mathbb D \times \{t_\alpha\})$ are called plaques and are homeomorphic to the unit disc. A nonempty subset $\mathcal L\subset X$ is called a leaf (of the lamination) if whenever $x\in\mathcal L\cap U_\alpha$ for some $\alpha$ then $\mathcal L$ contains the plaque in $U_\alpha$ containing $x$ and moreover $\mathcal L$ is minimal with respect to this condition. The set $X$ is then a disjoint union of leaves and for every $x$, the leaf through $x, \mathcal L_x,$ consists of all points in $X$ which can be joined to $x$ with a curve which is locally contained in a plaque.  A basis for a topology on a leaf $\mathcal L$ is given by proclaiming that 
each plaque in $\mathcal L$ is an open set, and that each set $U\cap\mathcal L$
is open, where $U$ is an open subset of $X$.  Then each leaf is a Hausdorff 
topological space, and each leaf has a natural structure of a Riemann surface inherited from the maps $\phi_\alpha.$  We say that a Riemann surface lamination 
is hyperbolic if each leaf is hyperbolic, \emph{i.e.}, it is universally covered by the unit disk.  \

Let $L\rightarrow X$ be a continuous complex line bundle.  We will say 
that $L$ is a complex line bundle on $X$ if it is defined by transition functions 
$f_{\alpha\beta}$ on $U_\alpha\cap U_\beta$, where $f_{\alpha\beta}$
is holomorphic along plaques.  By a smooth section of $L$ we will mean 
a continuous section which is smooth along the leaves.    
A weight $\sigma$ on $L$ will be a family 
of continuous functions $\sigma_\alpha$ on $U_\alpha$, smooth along the plaques, 
with $\sigma_\alpha = \sigma_\beta + 2\cdot\log|f_{\alpha\beta}|$ on $U_\alpha\cap U_\beta$.  We also assume that all partial derivatives of each $\sigma_\alpha$ vary 
continuously between leaves.  The weight $\sigma$ is said to be positive 
if each $\sigma$ is strictly subharmonic along the leaves.  \

For a Riemann surface lamination the notions of the tangent- and co-tangent bundle only have meaning along the leaves.  Considering these however, 
we have a natural definition of $(0,1)$-forms with coefficients in $L$, and also 
the $\overline\partial$-operator acting on sections along the leaves, denoted by $\overline\partial_b$.   A $(0,1)$-form is said to be smooth if, in local coordinates, it 
is continuous and smooth along plaques.   Note that if $v_t(z)$, $t\in\mathbb T$, is a family of continuous $(0,1)$-forms on the disk $\mathbb D$, continuous in the parameter $t$, and if $u_t$ is a family of $L^2$-functions solving $\overline\partial u_t=v_t$
in the week sense, and also $\|u_t-u_{t_0}\|_{L^2(\mathbb D)}\rightarrow 0$ as $t\rightarrow t_0$ for all $t_0\in\mathbb T$, 
then $u_t(z)$ is continuous in both variables.  This follows from the facts that the family of solutions given by 
the Cauchy integral has this property, that weakly holomorphic functions are holomorphic, and the $L^2$ to sup-norm estimate.

Let $X$ be a Riemann surface lamination, and assume that $X$ is equipped with 
a hermitian metric $\|\cdot\|_X$ along leaves, varying continuously also between leaves.  
For a point $x\in X$ let $\mathcal F_x$ denote the family of holomorphic maps 
$f:\mathbb D\rightarrow\mathcal L_x$ with $f(0)=x$.  For a map $f\in\mathcal F_x$
and $\zeta\in\mathbb D$ we let $f'(\zeta)$ denote the tangent vector $f_*(\zeta)(\partial/\partial\zeta)$.
The Kobayashi metric $K=K_X$ at a point $x\in X$ is defined by 
$$
K_x(x):=[\underset{f\in\mathcal F_x}{\sup}\{\|(f'(0))\|_X\}]^{-1}.
$$

Our approach to solving $\overline\partial_b$ on hyperbolic laminations will be to lift the problem to line bundles over the  
unit disk (the universal covers of the leaves), solve the $\overline\partial$-equations 
there according to a certain procedure, and then push the solutions back down.   
It will therefore be important to understand how the leaves distribute, and, moreover, that the universal covering maps vary regularly when we pass between leaves.  We will need the following result regarding the Kobayashi metric 
$K_x$:

\begin{theorem}\label{kmetric}
Let $X$ be a compact Riemann surface lamination, and assume that all leaves in $X$ are hyperbolic.  
Then $K_x$ is a continuous function on $X$.  
Moreover, if $x_j$ is a sequence 
of points in $X$ converging to a point $x_0\in X$, $v_j$ is a sequence 
of tangent vectors at the points $x_j$ converging to a nonzero tangent vector $v_0$ at 
$x_0$, and if $f_j:\mathbb D\rightarrow\mathcal L_{x_j}$ are the universal 
covering maps with $f_j(0)=x_j, f'_j(0)=\lambda_j\cdot v_j$, $\lambda_j>0$, 
then the sequence $f_j$ converges uniformly on compacts to the universal 
covering map $f_0:\mathbb D\rightarrow\mathcal L_x$ with $f(0)=x$ and $f'(0)=\lambda_0\cdot v_0, \lambda_0>0$.  
\end{theorem}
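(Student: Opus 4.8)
The plan is to deduce both statements from a normal families argument on the universal covering maps, using compactness of $X$ and the uniformization of each leaf. First I would set up, for each $x \in X$ together with a choice of unit tangent vector $v$ at $x$, the unique universal covering map $f_{x,v}\colon\mathbb D\to\mathcal L_x$ normalized by $f_{x,v}(0)=x$ and $f_{x,v}'(0)=\lambda\cdot v$ with $\lambda>0$; uniqueness here is the standard rigidity of covering maps of the disk (two such differ by an automorphism of $\mathbb D$ fixing $0$ with positive derivative, hence are equal). Note $\lambda = \|f_{x,v}'(0)\|_X = 1/K_x(x)$ when $v$ is a unit vector, so continuity of $K_x$ is equivalent to continuity of $x\mapsto\lambda$, which in turn will follow once we know the maps $f_{x,v}$ vary continuously.

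The core is the convergence statement. Given $x_j\to x_0$ and $v_j\to v_0\ne 0$ as in the theorem, consider the covering maps $f_j:=f_{x_j, v_j/\|v_j\|}$ (absorbing $\|v_j\|$ into $\lambda_j$). I would first argue these form a normal family: working in a fixed flow box $U_\alpha$ around $x_0$ with coordinates $(z_\alpha, t_\alpha)$, the leaves through nearby points are, near $x_0$, graphs $z\mapsto(z, t_{\alpha}(x_j))$, and I would use a Cauchy-estimate / Montel argument on small disks together with the fact that the image leaves all lie in the compact set $X$. Any subsequential limit $g$ is then holomorphic, maps $\mathbb D$ into $X$, and by continuity of the lamination structure its image lies in $\mathcal L_{x_0}$ (one checks $g$ stays in a leaf: locally it lands in plaques that converge to plaques of $\mathcal L_{x_0}$), with $g(0)=x_0$ and $g'(0)=\mu\cdot v_0$ for some $\mu\ge 0$. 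The two things to rule out are (i) $\mu=0$, i.e. degeneration, and (ii) $g$ not being a covering map (it is a priori only a holomorphic map $\mathbb D\to\mathcal L_{x_0}$). For (i), I would invoke Theorem \ref{kmetric}'s companion inequality in the reverse direction — more precisely, the easy upper semicontinuity of $K_x$, or equivalently a uniform lower bound on $\lambda_j$: since $X$ is compact and each leaf hyperbolic, there is a uniform upper bound on the Kobayashi metric (this is the new proof of upper semicontinuity promised in Section 2), giving $\lambda_j\ge c>0$, so $\mu\ge c>0$. For (ii): $g\colon\mathbb D\to\mathcal L_{x_0}$ lifts through the universal covering $f_0=f_{x_0,v_0/\|v_0\|}$ to a holomorphic self-map $h$ of $\mathbb D$ with $h(0)=0$; comparing derivatives at $0$ gives $\mu = \lambda_0\cdot|h'(0)|$ with $\lambda_0 = 1/K_{x_0}$. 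But the reverse semicontinuity — $\limsup_j K_{x_j} \le K_{x_0}$, i.e. $\liminf\lambda_j\ge\lambda_0$ — forces $\mu\ge\lambda_0$, hence $|h'(0)|\ge 1$, so by the Schwarz lemma $h$ is a rotation, and being normalized it is the identity; thus $g=f_0$. Since every subsequential limit equals $f_0$, the whole sequence converges to $f_0$ uniformly on compacts, and in particular $\lambda_j\to\lambda_0$, proving continuity of $K_x$.

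The main obstacle I anticipate is establishing the two-sided control of $\lambda_j=1/K_{x_j}$ purely from compactness and leafwise hyperbolicity, without circularity. The upper bound on $K_x$ (equivalently $\lambda_j$ bounded below) is the genuinely nontrivial input: it says hyperbolicity is ``uniform'' over the compact lamination, and it is exactly what Section 2 supplies via a new argument; I would quote it. The matching lower semicontinuity of $\lambda$ (upper semicontinuity of $K$) is comparatively soft — it follows from the fact that a limit of maps $f_j\in\mathcal F_{x_j}$ with $\|f_j'(0)\|\to\sup$ is a competitor in $\mathcal F_{x_0}$, using continuity of the lamination structure and of $\|\cdot\|_X$ between leaves. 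With both bounds in hand, the normal-families/Schwarz-lemma endgame above is routine. One should also take care that the limit map $g$ indeed has image inside a single leaf $\mathcal L_{x_0}$ rather than a union of leaves; this is handled by the local graph description in flow boxes and a connectedness argument, since $g(\mathbb D)$ is connected and locally contained in plaques converging to plaques of $\mathcal L_{x_0}$.
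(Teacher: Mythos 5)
There is a genuine gap, and it sits exactly at the step your whole endgame depends on. In your step (ii) you need $\liminf_j\lambda_j\ge\lambda_0$, i.e.\ $\limsup_j K_{x_j}\le K_{x_0}$ (upper semicontinuity of $K$), to force $|h'(0)|\ge 1$ and invoke Schwarz. But in your closing paragraph you declare this direction ``comparatively soft'' and justify it by the normal-families argument that ``a limit of maps $f_j\in\mathcal F_{x_j}$ with $\|f_j'(0)\|\to\sup$ is a competitor in $\mathcal F_{x_0}$.'' That argument proves the \emph{opposite} inequality: a limit of competitors at $x_j$ being a competitor at $x_0$ gives $\limsup_j\lambda_j\le\lambda_0$, i.e.\ \emph{lower} semicontinuity of $K$. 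Upper semicontinuity of $K$ requires the converse construction: given the extremal (covering) map $f_0:\mathbb D\to\mathcal L_{x_0}$, one must produce holomorphic maps $\mathbb D_r\to\mathcal L_{x_j}$ converging to $f_0|_{\mathbb D_r}$. This is a Royden-type statement and is the genuinely hard half; the paper devotes Theorem \ref{usc} to it, gluing local lifts of $f_0$ into nearby leaves across a chain of topological disks by means of Forstneri\v{c}'s splitting lemma (the point being that $f_0$ need not be injective and no Stein neighborhood of $f_0(\overline{\mathbb D}_r)$ is available in a lamination). Your proposal contains no substitute for this construction, and you have also mislabelled which inequality Section 2's ``new argument'' supplies: a uniform upper bound $K\le C$ (which is what rules out degeneration $\mu=0$ in your step (i)) is elementary from the definite size of plaques and is not what Theorem \ref{usc} is about.

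A secondary but real issue is your normality claim. ``Cauchy estimates on small disks plus compactness of $X$'' does not give equicontinuity of $\{f:\mathbb D\to\mathcal L_x\}$, because the radius on which a given map stays inside one flow box is not uniform. The paper gets normality from a Brody reparametrization: if $K$ had no positive lower bound one would produce a nonconstant entire curve in a leaf, contradicting hyperbolicity; the resulting bound $K\ge c>0$ yields the uniform estimate $\|f'(\zeta)\|\le c^{-1}(1-|\zeta|^2)^{-1}$ and hence sequential compactness of the family. Your overall architecture (characterize covering maps as the extremal maps with $\|f'(0)\|=K_x^{-1}$, then combine continuity of $K$ with normal families and the Schwarz lemma) coincides with the paper's; the missing ingredients are precisely the Brody lower bound and, above all, an actual proof of upper semicontinuity of $K$.
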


This result was proved by Candel \cite{Candel} (see also \cite{Ghys},\cite{Verjovsky}).
Recently, Dinh, Nguyen and Sibony \cite{DihnNguyenSibony} proved that 
$K_x$ is actually H\"{o}lder continuous.  
Lower semi-continuity is proved following Brody \cite{Brody}: we first obtain a 
strictly positive lower bound $c>0$ for $K$; otherwise we could produce 
a non-degenerate image of $\mathbb C$ in a leaf $\mathcal L$.   Knowing this we 
have that any holomorphic map $f:\mathbb D\rightarrow\mathcal L$ satisfies 
$|f'(\zeta)|\leq c\cdot (1/(1-|\zeta|^2))$.  Hence $\mathcal F:=\{Hol(\mathbb D,\mathcal\mathcal L_x):x\in X\}$ is sequentially compact, and so $K$ is lower semi-continuous.  
Assume for a moment that we also know that $K$ is upper semicontinuous, and note the following: if $x\in X$ is a point, and $f:\mathbb D\rightarrow\mathcal L_x$ is holomorphic, $f(0)=x$, then $f$ is a universal covering map if and only if $|f'(0)|=K_x^{-1}$; this follows from the Schwarz lemma and the fact than such map can be factored through the universal covering map.  Hence any convergent sequence 
of universal covering maps is a universal covering map, and so we easily obtain the last claim of the theorem.  \

We will give a new proof of the upper semi-continuity of $K_X$.  
In the case where $X$ is a complex manifold, the upper semi-continuity 
of $K_X$ is a well known 
theorem of Royden \cite{Ro}.  The crucial point in his approach
is to prove that if $f:\mathbb D\rightarrow X$
is an embedding and if $r<1$, then $f(\overline{\mathbb D}_r)$ admits a Stein neighborhood 
in $X$, so the strategy is not immediately applicable in the case of laminations.

\begin{theorem}\label{usc}
Let $X$ be any Riemann surface lamination.  
Then the $K_X$ is upper 
semi-continuous on $X$.  
\end{theorem}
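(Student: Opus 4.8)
The plan is to prove upper semi-continuity of $K_X$ directly from the definition, by producing, near a given point $x_0 \in X$ and a nearby point $x$, a holomorphic map $f:\mathbb D \to \mathcal L_x$ with $f(0)=x$ and $\|f'(0)\|_X$ close to the near-optimal value achieved at $x_0$. Fix $\varepsilon>0$ and choose $g \in \mathcal F_{x_0}$ with $\|g'(0)\|_X > K_{x_0}(x_0)^{-1} - \varepsilon$. Since the image $g(\overline{\mathbb D}_r)$ for $r<1$ close to $1$ is contained in finitely many flow boxes, I would first reduce to a purely local problem: it suffices to perturb $g|_{\mathbb D_r}$, leaf by leaf across the plaques it meets, to a map into a nearby leaf. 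The key point, and the substitute for Royden's Stein-neighborhood argument, is that within a single flow box $U_\alpha \cong \mathbb D(z_\alpha) \times T_\alpha(t_\alpha)$ the disc is a \emph{trivial} product: a plaque is just a horizontal slice, and holomorphic maps into nearby slices $\mathbb D \times \{t\}$ are obtained from holomorphic maps into $\mathbb D \times \{t_0\}$ simply by keeping the same $z_\alpha$-component and changing the constant $T_\alpha$-value. The transition maps $\phi_{\beta\alpha}$ are holomorphic along plaques, so this compatibility is preserved when $g$ crosses from one flow box to the next, provided the $t$-parameters are moved consistently.

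Concretely, here are the steps I would carry out. (1) Cover $g(\overline{\mathbb D}_r)$ by finitely many flow boxes and choose a Lebesgue number, so that $g$ maps each disc in a fixed finite open cover of $\overline{\mathbb D}_r$ into a single flow box. (2) For $x$ close to $x_0$, the leaf $\mathcal L_x$ meets the first flow box $U_{\alpha_0}$ in a plaque $\mathbb D \times \{t(x)\}$ with $t(x)$ close to $t(x_0)$; define $f$ on the first disc of the cover by composing the $z$-component of $g$ with this plaque. (3) Continue along the chain of flow boxes: on overlaps the two local definitions of $f$ agree because the transition functions act holomorphically along plaques and we are transporting the same $z$-coordinate while only adjusting $t$; the new $t$-values stay close to the old ones by continuity of $\phi_{\beta\alpha}$ and $\phi_\alpha$. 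This glues to a well-defined holomorphic $f:\mathbb D_r \to \mathcal L_x$ with $f(0)=x$. (4) Rescale the domain $\mathbb D_r \to \mathbb D$; since the $z$-component of $f$ converges uniformly on $\overline{\mathbb D}_r$ to that of $g$ as $x\to x_0$, and the metric $\|\cdot\|_X$ varies continuously between leaves, we get $\|f'(0)\|_X \to \|g'(0)\|_X$, whence $\limsup_{x\to x_0} K_x(x)^{-1} \ge \|g'(0)\|_X > K_{x_0}(x_0)^{-1}-\varepsilon$. Letting $\varepsilon \to 0$ gives $\limsup_{x\to x_0} K_x(x)^{-1} \ge K_{x_0}(x_0)^{-1}$, i.e. upper semi-continuity of $K_X$.

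The subtle point — the main obstacle — is step (3): one must verify that the chain of local modifications is \emph{globally consistent on the disc}, not merely well-defined on each overlap. The issue is that $g(\mathbb D_r)$ may wind around so that the same flow box is entered many times along different parts of $\mathbb D_r$, and the $t$-parameter one would assign there must be the same on each visit; this forces one to track the plaque structure along $g$ carefully, essentially passing to a finite "flow-box chain" decomposition of $\mathbb D_r$ indexed by the connected pieces of $g^{-1}(U_\alpha)$ rather than by the boxes themselves, and to check that the holonomy along $g$ is respected. A clean way to organize this is to note that $g^*X$ — the pullback lamination over $\mathbb D_r$ — is itself a Riemann surface lamination whose leaf through $0$ is (a quotient of) $\mathbb D_r$, and that nearby leaves of the lamination $X$ pull back to graphs over $\mathbb D_r$ for $x$ sufficiently close to $x_0$; these graphs are exactly the maps $f$ we want. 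Establishing that such graphs exist over all of $\mathbb D_r$ (not just locally) is where the compactness of $\overline{\mathbb D}_r$ and the finiteness of the flow-box cover do the real work.
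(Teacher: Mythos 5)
Your overall strategy (take a near-extremal $g\in\mathcal F_{x_0}$, restrict to $\mathbb D_r$, push it into a nearby leaf $\mathcal L_x$, rescale, and let $r\to1$, $\varepsilon\to0$) is the same as the paper's, but the heart of the matter is your step (3), and the claim on which it rests is false. Within a single flow box the lift is indeed obtained by keeping the $z_\alpha$-component and changing the transverse parameter; but on the overlap of two flow boxes the two local lifts do \emph{not} agree. Writing $\phi_{\beta\alpha}(z_\alpha,t_\alpha)=(z_{\beta\alpha}(z_\alpha,t_\alpha),t_{\beta\alpha}(t_\alpha))$, the lift performed in $U_\alpha$ has $z_\beta$-coordinate $z_{\beta\alpha}(z_\alpha\circ g,\,t)$, whereas the lift performed in $U_\beta$ has $z_\beta$-coordinate $z_{\beta\alpha}(z_\alpha\circ g,\,t_0)$; these coincide only when $z_{\beta\alpha}$ is independent of the transverse variable, i.e.\ only when the lamination is locally a genuine product across the two boxes. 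In general the two lifts agree only up to an error that is small when $t$ is near $t_0$, and the whole difficulty of the theorem is to correct these mismatches coherently over all of $\overline{\mathbb D}_r$. Your proposed repair --- that nearby leaves of the pullback $g^*X$ are graphs over $\mathbb D_r$ --- is a restatement of the conclusion (a Reeb-stability type assertion), not an argument: it is exactly the statement that the local lifts can be glued.

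The paper supplies the missing mechanism. It exhausts $\overline{\mathbb D}_r$ by topological disks $A_k=A_{k-1}\cup B_k$ arranged so that each $f(B_k)$ lies in one flow box and each overlap $C_k=B_{k+1}\cap A_k$ avoids the critical set of $f$, so that $f$ is injective on a neighborhood $\tilde C_k$ of $C_k$. Given the inductively built approximation $f_j$ near $A_k$ and the flow-box lift $g_j$ near $B_{k+1}$, the discrepancy $\gamma_j=g_j^{-1}\circ f_j$ is a holomorphic map of $\tilde C_k$ close to the identity (this is where injectivity is essential --- a point your sketch does not address, since $g$ may have critical points), and Forstneri\v{c}'s splitting theorem yields $\gamma_j=\beta_j\circ\alpha_j^{-1}$ with $\alpha_j,\beta_j$ injective and close to the identity; setting $\tilde f_j=f_j\circ\alpha_j$ near $A_k$ and $\tilde f_j=g_j\circ\beta_j$ near $B_{k+1}$ makes the two pieces match. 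Without some such correction device (a compositional splitting lemma or an implicit-function-theorem substitute for Royden's Stein neighborhoods), your construction does not produce a well-defined map on $\mathbb D_r$, so the proposal has a genuine gap at its central step.
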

\begin{proof}
Let $x\in X$ and let $f:\mathbb D\rightarrow\mathcal L_x$ be a holomorphic
map with $f(0)=x$ and $f'(0)\neq 0$.  Let $x_j$ be a sequence of points 
in $X$ converging to $x$.  We will show that for any $0<r<1$, the map 
$f_r:=f|_{\mathbb D_r}$ is the uniform limit of maps $f_j:\mathbb D_r\rightarrow\mathcal L_{x_j}$. \

Let $Z\subset{\mathbb D}_r$ denote the singular locus of $f$, \emph{i.e.}, the finite set of points where $f'$ vanishes.  
We will cover $\overline{\mathbb D}_r$ by a suitable finite increasing sequence $A_j$
of closed topological disks.   Each $A_j$ is obtained by defining 
$A_j:=A_{j-1}\cup B_j$ where $B_j$ is a closed topological disk for 
$j\geq 2$, and $A_1=B_1$.  
We want that   
\begin{itemize}
\item[1)] $C_j:=B_{j+1}\cap A_{j}\neq\emptyset$, 
\item[2)] $(\overline{A_{j-1}\setminus B_j})\cap (\overline {B_j{\setminus A_{j-1}}})=\emptyset$, 
\item[3)] $Z\cap C_j=\emptyset$, and
\item[4)] $f(B_j)$ is contained in a coordinate chart $U_j\subset X$ for each $j$. 
\end{itemize}
We make sure that each $C_j$ has an open neighborhood $\tilde C_j$ in $\mathbb D$
such that
\begin{itemize}
\item[5)] $f|_{\tilde C_j}$ is injective. 
\end{itemize} 

By choosing $A_1$ small enough there is an open neighborhood $W_1$
of $A_1$ in $\mathbb D$ such that $f:\overline W_1\rightarrow\mathcal L_x$
is the uniform limit of a sequence $f_j:\overline W_1\rightarrow\mathcal L_{x_j}$; 
simply lift $f$ within a flowbox.   We will proceed by induction.  \

Assume that we have found an open neighborhood $W_k$ of $A_k$, $k\geq 1$, 
such that $f:\overline W_k\rightarrow\mathcal L_x$ is the uniform limit 
of a sequence $f_j:\overline W_k\rightarrow\mathcal L_{x_j}$.  
Choose an open neighborhood $\widehat B_{k+1}$ of $B_{k+1}$
such that $f(cl(\widehat B_{k+1}))$ is contained in a coordinate chart.  
By possibly having to choose a smaller $\tilde C_k$ we may assume that 
$\tilde C_k\subset\subset W_k\cap\widehat B_{k+1}$.

According to \cite{Forstneric}, Theorem 4.1., there exist open neighborhoods 
$A_k'$, $B_{k+1}'$ and $C_k'$ of $A_k$, $B_{k+1}$ and $C_k$ respectively, $C_k'\subset A_k'\cap B_{k+1}'\subset\tilde C_k$, such that if $\gamma:\tilde C_k\rightarrow\mathbb D$ is a holomorphic map sufficiently close 
to the identity, then there exist injective holomorphic maps $\alpha:A_k'\rightarrow\mathbb D$
and $\beta:B_{k+1}'\rightarrow\mathbb D$ such that $\gamma=\beta\circ\alpha^{-1}$ on $C_k'$. 
Moreover, $\alpha$ and $\beta$ can be assumed uniformly close to the identity, 
depending on $\gamma$.  

Fix a flow box containing $f(cl(\widehat B_{k+1}))$, and let $g_j:\widehat B_{k+1}\rightarrow\mathcal L_{x_j}$ be 
the sequence of maps obtained by lifting $f:\widehat B_{k+1}$ to the leaf 
$\mathcal L_{x_j}$.  Then $\gamma_j:=g_j^{-1}\circ f_j\rightarrow id$
uniformly on $\tilde C_j$ as $j\rightarrow\infty$.  Let $\alpha_j,\beta_j$
be a sequence of splittings as alluded to above.   Then if we choose a small
enough open neighborhood $W_{k+1}$ of $A_{k+1}$ we have that 
the map 
$\tilde f_j$ defined as $\tilde f_j:=f_j\circ\alpha_j$ near $A_k$
and $\tilde f_j=g_j\circ\beta_j$ near $B_{k+1}$ are well defined on $\overline W_{k+1}$
and converges uniformly to the map $f$ as $k\rightarrow\infty$.  

\end{proof}

\section{Preparations for the analysis on families of  unit discs}\label{preliminaries}

In this section we discuss decompositions of the unit disc which reflects the way the disc covers leaves
of hyperbolic laminations, and we also estimate the deck transformations. This will be used in the next section to investigate the $\overline{\partial}$-	equation for data pulled back from the lamination.

\begin{definition}
Throughout this paper we let $\psi(\zeta)$ denote the function $\psi(\zeta)=\log (1-|\zeta|^2)$ defined 
on the unit disk $\mathbb D$ in $\mathbb C$.
\end{definition}

Note that the Poincar\'{e} metric on the disk is given by $P(\zeta)=e^{-\psi(\zeta)}|d\zeta|$, and recall that the Poincar\'{e} distance $d_P(0,a)$ between 
the origin and a point $a\in\mathbb D$ is given by $d_P(0,a)=\frac{1}{2}\log[(1+|a|)/(1-|a|)]$.

\begin{definition}
We let $A_n:=\{\zeta\in\mathbb D:1-(\frac{1}{2})^{n}\leq |\zeta|<1-(\frac{1}{2})^{n+1}\}$.
We let $\mathbb D(n)=\{\zeta\in\mathbb D: |\zeta|\leq1-(\frac{1}{2})^{n+1}\}$.
\end{definition}

Note that for all $a\in A_n$ we have that 
\begin{itemize}
\item[i)] $(\frac{1}{2})^{n+1}\leq 1-|a|^2\leq(\frac{1}{2})^{n-1}$,
\item[ii)] $2^{n-1}\leq e^{-\psi(a)}\leq 2^{n+1}$, and 
\item[iii)] $d_P(0,a)\leq n+2$. 
\end{itemize}

\begin{lemma}\label{psiaut}
Let $\phi\in\Aut_{hol}\mathbb D$ with $\phi(0)\in A_n$.  Then $e^{-\psi(\phi(\zeta))}\leq 2^{n+3}e^{-\psi(\zeta)}$ for all $\zeta\in\mathbb D$.
\end{lemma}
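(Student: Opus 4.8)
The plan is to reduce the statement, via the identity $e^{-\psi(\zeta)}=1/(1-|\zeta|^2)$, to the purely real-analytic estimate
$$
\frac{1-|\zeta|^2}{1-|\phi(\zeta)|^2}\leq 2^{n+3}\qquad(\zeta\in\mathbb D),
$$
and then to prove this by putting $\phi$ in normal form and applying the classical automorphism identity for $1-|\phi(\zeta)|^2$.

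Since $|\phi(\zeta)|$ is unchanged if we post-compose $\phi$ with a rotation of $\mathbb D$, and such a rotation preserves $A_n$, I may assume that $\phi(\zeta)=\phi_a(\zeta):=\frac{a-\zeta}{1-\overline a\zeta}$ with $a=\phi(0)\in A_n$. A direct computation, expanding $|1-\overline a\zeta|^2-|a-\zeta|^2$, yields the standard identity
$$
1-|\phi_a(\zeta)|^2=\frac{(1-|a|^2)(1-|\zeta|^2)}{|1-\overline a\zeta|^2},
$$
and hence
$$
\frac{1-|\zeta|^2}{1-|\phi(\zeta)|^2}=\frac{|1-\overline a\zeta|^2}{1-|a|^2}.
$$

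It then remains to bound the two factors on the right. For the numerator, $|1-\overline a\zeta|\leq 1+|a|\,|\zeta|<2$, so $|1-\overline a\zeta|^2<2^2$. For the denominator, since $a=\phi(0)\in A_n$, property (i) of the annuli $A_n$ gives $1-|a|^2\geq(\tfrac{1}{2})^{n+1}=2^{-(n+1)}$. Combining the two estimates, the quotient is $<2^2\cdot 2^{n+1}=2^{n+3}$, which is the asserted inequality.

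This is a short elementary computation and I do not expect a serious obstacle. The only point requiring a bit of care is bookkeeping: one must apply the annulus bound to $|a|=|\phi(0)|$ (and observe that the unimodular rotation factor drops out of every modulus in sight), rather than inadvertently working with $\phi^{-1}(0)$; and one should keep track of the constants $2$ versus $2^{n+1}$ so that they combine exactly to $2^{n+3}$.
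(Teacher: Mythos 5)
Your proof is correct and follows essentially the same route as the paper: the paper writes $|\phi'(\zeta)|=(1-|a|^2)/|1-\overline a\zeta|^2$ and invokes the Schwarz--Pick identity $|\phi'(\zeta)|=(1-|\phi(\zeta)|^2)/(1-|\zeta|^2)$, which is exactly your algebraic identity for $1-|\phi_a(\zeta)|^2$, and then both arguments conclude with the bounds $|1-\overline a\zeta|^2<4$ and $1-|a|^2\geq(\tfrac12)^{n+1}$.
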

\begin{proof}
Write $\phi(\zeta)=e^{i\beta}(\zeta-a)/(1-\overline a\zeta)$ with $a\in A_n$.  
Then $\phi'(\zeta)=e^{i\beta}\cdot(1-a\overline a)/(1-\overline a\cdot\zeta)^2$, and so 
$|\phi'(\zeta)|\geq\frac{1}{4}(1-|a^2|)$ for all $\zeta\in\mathbb D$.  
By the Schwarz-Pick Lemma we also have that 
$|\phi'(\zeta)|=(1-|\phi(\zeta)|^2)/(1-|\zeta|^2)$, so we get that 
$1/(1-|\phi(\zeta)|^2)\leq 4/[(1-|a|^2)(1-|\zeta|^2)]$.  
\end{proof}

\begin{lemma}\label{cover}
Let $\phi\in\Aut_{hol}\mathbb D$ with $\phi(0)\in A_n$.  Then 
$$
\mathbb D_{(\frac{1}{2})^{n+k+3}}(\phi(0))\subset\phi(\mathbb D_{(\frac{1}{2})^k}).
$$
\end{lemma}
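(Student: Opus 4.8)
The plan is to reduce to the concrete formula for a disk automorphism and to use the fact that the image of a subdisk under a Möbius transformation is again a Euclidean disk, whose radius is controlled by the distortion of $\phi$. First I would write $\phi(\zeta) = e^{i\beta}(\zeta - a)/(1 - \overline{a}\zeta)$ with $a = \phi^{-1}(0) \cdot$(unit) $\in A_n$, i.e. $|a| = |\phi(0)|$ with $\phi(0) \in A_n$, so by property (i) we have $1 - |a|^2 \geq (\tfrac12)^{n+1}$. The key classical fact is that $\phi$ maps the Poincaré disk $\{d_P(0,\zeta) < \rho\}$ onto the Poincaré disk $\{d_P(\phi(0), w) < \rho\}$ isometrically, and each such Poincaré ball is a genuine Euclidean disk. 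So it suffices to compare the Euclidean radius of the Poincaré ball $B_P(\phi(0), \rho)$, where $\rho = d_P(0, (\tfrac12)^k \cdot 1)$ is the Poincaré radius of $\mathbb{D}_{(1/2)^k}$, against $(\tfrac12)^{n+k+3}$.

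The main estimate is then purely about the Euclidean size of a hyperbolic ball. I would use the Schwarz–Pick identity $|\phi'(\zeta)| = (1 - |\phi(\zeta)|^2)/(1 - |\zeta|^2)$ together with the lower bound $|\phi'(\zeta)| \geq \tfrac14(1-|a|^2)$ derived exactly as in Lemma \ref{psiaut} (from $|1 - \overline{a}\zeta| \leq 2$). This gives, for every $\zeta \in \mathbb{D}$,
\[
1 - |\phi(\zeta)|^2 \;\geq\; \tfrac14(1-|a|^2)\,(1 - |\zeta|^2).
\]
Restricting to $\zeta \in \mathbb{D}_{(1/2)^k}$, the Euclidean length of the image of any radial segment from $0$ to such $\zeta$ is at least $\tfrac14(1-|a|^2) \cdot (1 - (\tfrac12)^{2k})^{?}$... more precisely I would integrate: for $|\zeta| \leq (\tfrac12)^k$,
\[
|\phi(\zeta) - \phi(0)| \;\geq\; \int_0^{|\zeta|} |\phi'(t\zeta/|\zeta|)|\,dt \;\geq\; \tfrac14(1-|a|^2)\,|\zeta|,
\]
but this bounds the image from below, which is the wrong direction. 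Instead, since $\phi(\mathbb{D}_{(1/2)^k})$ is a Euclidean disk centered somewhere near $\phi(0)$, I want a lower bound on its radius, equivalently a \emph{lower} bound on $|\phi(\zeta) - \phi(0)|$ for $\zeta$ on the boundary circle $|\zeta| = (\tfrac12)^k$ — which is exactly what the displayed inequality gives along with the observation that $\phi(0) \in \phi(\mathbb{D}_{(1/2)^k})$ forces the disk to contain $\mathbb{D}_{r}(\phi(0))$ with $r = \operatorname{dist}(\phi(0), \partial\phi(\mathbb{D}_{(1/2)^k}))$, and that distance is at least $\min_{|\zeta|=(1/2)^k}|\phi(\zeta)-\phi(0)|$.

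So the concrete chain is: $r \geq \min_{|\zeta| = (1/2)^k} |\phi(\zeta) - \phi(0)| \geq \tfrac14(1-|a|^2)(\tfrac12)^k \geq \tfrac14 (\tfrac12)^{n+1}(\tfrac12)^k = (\tfrac12)^{n+k+3}$, using (i) in the third step. The one subtlety to handle carefully is the claim that $\phi(\mathbb{D}_{(1/2)^k})$, being a round disk containing $\phi(0)$, contains the round disk about $\phi(0)$ of radius equal to the distance from $\phi(0)$ to its boundary circle; this is immediate once one knows the image is round, and that in turn is the standard fact that Möbius transformations send circles to circles. I expect the only real bookkeeping obstacle to be making the "image of a subdisk is a disk and contains a concentric disk of the claimed radius" step rigorous without circular reasoning, but the lower bound $|\phi(\zeta)-\phi(0)| \geq \tfrac14(1-|a|^2)|\zeta|$ on the bounding circle, combined with roundness, closes it cleanly.
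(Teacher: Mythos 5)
Your overall strategy is the same as the paper's: everything rests on the derivative bound $|\phi'(\zeta)|\geq\tfrac14(1-|a|^2)\geq(\tfrac12)^{n+3}$ taken from the proof of Lemma \ref{psiaut}. But the step you actually lean on is written backwards. For $|\zeta|=(\tfrac12)^k$, the quantity $\int_0^{|\zeta|}|\phi'(t\zeta/|\zeta|)|\,dt$ is the Euclidean \emph{length of the image} of the radial segment, so it satisfies $|\phi(\zeta)-\phi(0)|\leq\int_0^{|\zeta|}|\phi'|\,dt$, not $\geq$. A pointwise lower bound on $|\phi'|$ along a segment never by itself lower-bounds the displacement of the endpoints (the image curve could in principle double back), so your displayed inequality $|\phi(\zeta)-\phi(0)|\geq\tfrac14(1-|a|^2)|\zeta|$ is unjustified as derived --- and you half-notice the direction problem but then feed that same inequality into the final chain anyway. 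That is the gap.

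The inequality you need is nevertheless true, and the repair is short. Either compute directly, using $\phi(0)=-e^{i\beta}a$:
\[
|\phi(\zeta)-\phi(0)|=\left|\frac{\zeta-a}{1-\overline a\zeta}+a\right|=\frac{(1-|a|^2)\,|\zeta|}{|1-\overline a\zeta|}\geq\frac{(1-|a|^2)\,|\zeta|}{2}\geq(\tfrac12)^{n+k+2}\quad\text{for }|\zeta|=(\tfrac12)^k,
\]
or use injectivity: take $w\in\partial\phi(\mathbb D_{(\frac12)^k})$ realizing the distance from $\phi(0)$ to the boundary of the image, pull the segment $[\phi(0),w]$ back by $\phi^{-1}$ to a path from $0$ to $\partial\mathbb D_{(\frac12)^k}$ of Euclidean length at least $(\tfrac12)^k$, and note that $\phi$ stretches lengths by at least $\tfrac14(1-|a|^2)\geq(\tfrac12)^{n+3}$. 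The second route also makes your appeal to ``M\"obius maps send circles to circles'' unnecessary: for \emph{any} bounded open $U$ containing $p$ one has $\mathbb D_{\operatorname{dist}(p,\partial U)}(p)\subset U$, so roundness of the image plays no role. With either fix your argument closes and coincides with the paper's one-line proof.
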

\begin{proof}
>From the previous proof we have that $|\phi'(\zeta)|\geq (\frac{1}{2})^{n+3}$.
\end{proof}

\begin{lemma}\label{derivativeupper}
Let $\phi\in\Aut_{hol}\mathbb D$ with $\phi(0)\in A_n$.  Then 
$$
|\phi'(\zeta)|\leq 2^{n+2}
$$
for all $\zeta\in\mathbb D$.  
\end{lemma}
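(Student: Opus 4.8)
The plan is to reuse the explicit description of a disk automorphism from the proof of Lemma~\ref{psiaut} and to estimate the numerator and the denominator separately. Write $\phi(\zeta) = e^{i\beta}(\zeta - a)/(1 - \overline a\zeta)$, where $a = -e^{-i\beta}\phi(0)$, so that $|a| = |\phi(0)|$; the hypothesis $\phi(0)\in A_n$ thus means $|a| < 1 - (\frac{1}{2})^{n+1}$. Differentiating gives $\phi'(\zeta) = e^{i\beta}(1 - |a|^2)/(1 - \overline a\zeta)^2$, and the reverse triangle inequality yields $|1 - \overline a\zeta| \geq 1 - |a||\zeta| \geq 1 - |a|$ for every $\zeta \in \mathbb D$. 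Hence
$$
|\phi'(\zeta)| \leq \frac{1 - |a|^2}{(1 - |a|)^2} = \frac{1 + |a|}{1 - |a|}
$$
uniformly in $\zeta$.

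It then remains only to feed in the size of $|a|$: from $|a| < 1 - (\frac{1}{2})^{n+1}$ we get $1 - |a| > (\frac{1}{2})^{n+1}$, while $1 + |a| < 2$ trivially, so $|\phi'(\zeta)| < 2\cdot 2^{n+1} = 2^{n+2}$, which is the claimed bound.

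There is no real obstacle here, but one point deserves care: the lower bound for $1 - |a|$ should be read off directly from the defining inequality of $A_n$, which gives $(\frac{1}{2})^{n+1}$. Passing instead through the weaker estimate $1 - |a|^2 \geq (\frac{1}{2})^{n+1}$ of item (i), together with $1 - |a| \geq \frac{1}{2}(1 - |a|^2)$, would only produce the constant $2^{n+3}$. This pointwise bound is of the same order of magnitude as the factor $2^{n+3}$ appearing in Lemma~\ref{psiaut}: both reflect that $\phi$ can stretch by roughly $1/(1 - |\phi(0)|) \sim 2^n$ near the boundary point in the direction of $\phi(0)$, while it contracts in the opposite direction.
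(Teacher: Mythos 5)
Your proof is correct and follows essentially the same route as the paper: the explicit Blaschke-factor formula for $\phi$, the bound $|1-\overline a\zeta|\geq 1-|a|$, and the resulting estimate $|\phi'(\zeta)|\leq(1+|a|)/(1-|a|)\leq 2\cdot 2^{n+1}$. The paper's proof is just the one-line inequality $|\phi'(\zeta)|\leq(1+|a|)/(1-|a|)$ with the final numerical step left implicit; you have filled in that step correctly, including the correct reading $1-|a|>(\tfrac{1}{2})^{n+1}$ from the definition of $A_n$.
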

\begin{proof}
We have $|\phi'(\zeta)|=(1-|a^2|)/|(1-\overline a\zeta)|^2\leq (1+|a|)/(1-|a|)$.
\end{proof}

\begin{lemma}\label{derivative}
Let $\phi\in\Aut_{hol}\mathbb D$ with $\phi(0)\in A_n$.  Then 
$$
|\phi'(\zeta)|\leq
(1-r)^{-2}(\frac{1}{2})^{n-1}
$$
for $\zeta\in\mathbb D_{r}$.
\end{lemma}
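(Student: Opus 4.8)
The plan is to argue exactly as in the proofs of Lemmas~\ref{psiaut}--\ref{derivativeupper}, starting from the explicit form of a disc automorphism. First I would write $\phi(\zeta)=e^{i\beta}(\zeta-a)/(1-\overline a\zeta)$; since $|\phi(0)|=|a|$, the hypothesis $\phi(0)\in A_n$ means exactly that $a\in A_n$, and in particular note~(i) above gives $1-|a|^2\leq(\frac{1}{2})^{n-1}$. Differentiating, $\phi'(\zeta)=e^{i\beta}(1-|a|^2)/(1-\overline a\zeta)^2$, so that
\[
|\phi'(\zeta)|=\frac{1-|a|^2}{|1-\overline a\zeta|^2}.
\]

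Next I would bound the two factors separately. The numerator is controlled purely in terms of $n$ by note~(i), as just recalled. For the denominator the key observation is that no information about $a$ beyond $|a|<1$ is needed: for $\zeta\in\mathbb D_r$ one has $|1-\overline a\zeta|\geq 1-|a|\,|\zeta|\geq 1-|\zeta|\geq 1-r$, hence $|1-\overline a\zeta|^2\geq(1-r)^2$. Combining the two estimates yields
\[
|\phi'(\zeta)|\leq\frac{(\frac{1}{2})^{n-1}}{(1-r)^2}=(1-r)^{-2}\Bigl(\tfrac{1}{2}\Bigr)^{n-1}
\]
for all $\zeta\in\mathbb D_r$, which is the assertion.

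There is no genuine obstacle here; the only point worth a moment's care is to keep the entire $n$-dependence in the numerator (via note~(i)) and to use only $|a|<1$ in the denominator estimate, so that the bound comes out with exactly the stated shape and the correct blow-up rate as $r\to 1$. This is the estimate that, together with the uniform bound of Lemma~\ref{derivativeupper}, will be used in the $\overline\partial$-analysis on families of discs: Lemma~\ref{derivativeupper} controls $\phi'$ globally while this lemma gives a sharper control on a fixed compact subdisc, where the factor $(\tfrac12)^{n-1}$ is what matters.
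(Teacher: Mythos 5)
Your proof is correct and follows exactly the route the paper intends: the paper's own proof consists only of the identity $|\phi'(\zeta)|=(1-|a|^2)/|1-\overline a\zeta|^2$, and your argument supplies the two evident estimates (numerator bounded by $(\tfrac12)^{n-1}$ via note (i), denominator bounded below by $(1-r)^2$) that complete it.
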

\begin{proof}
We have that $|\phi'(\zeta)|=|(1-|a^2|)|/|(1-\overline a\zeta)^2|$ for all $\zeta\in\mathbb D$.
\end{proof}

\begin{lemma}
Let $\phi\in\Aut_{hol}\mathbb D$ with $\phi(0)\in A_n$.  For any $0<r<1$ we 
have that 
$$
|\phi(\zeta)|\geq 1-(\frac{1}{2})^n\cdot [1 + \frac{2r}{(1-r)^2}], 
$$
for all $\zeta\in\mathbb D_r$.  
\end{lemma}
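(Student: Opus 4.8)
The plan is to estimate the displacement $|\phi(\zeta)-\phi(0)|$ by integrating $\phi'$ along the radial segment from $0$ to $\zeta$, using the derivative bound already established in Lemma~\ref{derivative}, and then to conclude by the triangle inequality. The point is that $\phi(0)\in A_n$ forces $|\phi(0)|$ to be very close to $1$, while on $\mathbb D_r$ the map $\phi$ moves points by at most $O((\tfrac12)^n)$, so the image of $\mathbb D_r$ stays deep inside the disk.

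First I would write $\phi$ in the normal form $\phi(\zeta)=e^{i\beta}(\zeta-a)/(1-\overline a\zeta)$, so that $|\phi(0)|=|a|$ and hence $a\in A_n$; in particular $|\phi(0)|=|a|\geq 1-(\tfrac12)^n$ by the definition of $A_n$. Next, for $\zeta\in\mathbb D_r$ the segment $\{t\zeta: 0\leq t\leq 1\}$ lies in $\mathbb D_r$, so
\[
|\phi(\zeta)-\phi(0)|=\Bigl|\int_0^1\phi'(t\zeta)\,\zeta\,dt\Bigr|\leq |\zeta|\cdot\sup_{\eta\in\mathbb D_r}|\phi'(\eta)|\leq r\cdot(1-r)^{-2}\Bigl(\tfrac12\Bigr)^{n-1},
\]
where the last inequality is exactly Lemma~\ref{derivative}.

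Since $r\,(\tfrac12)^{n-1}=2r\,(\tfrac12)^n$, combining this with $|\phi(\zeta)|\geq|\phi(0)|-|\phi(\zeta)-\phi(0)|$ yields
\[
|\phi(\zeta)|\geq\Bigl(1-\bigl(\tfrac12\bigr)^n\Bigr)-\frac{2r}{(1-r)^2}\Bigl(\tfrac12\Bigr)^n=1-\Bigl(\tfrac12\Bigr)^n\Bigl[1+\frac{2r}{(1-r)^2}\Bigr],
\]
which is the claimed inequality.

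There is essentially no obstacle here: this is a routine estimate, and the only thing to be careful about is that the bound of Lemma~\ref{derivative} already absorbs both ingredients one would otherwise spell out by hand, namely $1-|a|^2\leq(\tfrac12)^{n-1}$ (property (i)) and $|1-\overline a\zeta|\geq 1-r$ for $\zeta\in\mathbb D_r$. Once the normal form and the integral representation of $\phi(\zeta)-\phi(0)$ are in place, the conclusion is immediate.
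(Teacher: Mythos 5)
Your proof is correct and follows exactly the route the paper intends: the paper's one-line proof ("Lemma \ref{derivative} and the mean value theorem") is precisely your integration of $\phi'$ along the radial segment combined with $|\phi(0)|\geq 1-(\tfrac{1}{2})^{n}$ and the triangle inequality. Nothing is missing.
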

\begin{proof}
This follows from Lemma \ref{derivative} and the mean value theorem.  
\end{proof}

\begin{lemma}\label{derivativer}
Let $r>0$.  There exists a constant $c>0$ such that 
$$
e^{-\psi(\phi(\zeta))}\geq c\cdot 2^n,
$$
for $\phi\in\Aut_{hol}\mathbb D$ with $\phi(0)\in A_n$, 
for all $\zeta\in\mathbb D_r$ and all $n\in\mathbb N$.   
\end{lemma}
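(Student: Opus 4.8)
The plan is to show that $1-|\phi(\zeta)|^{2}$ is bounded above by a constant multiple of $2^{-n}$, uniformly in $n\in\mathbb N$, $\phi\in\Aut_{hol}\mathbb D$ with $\phi(0)\in A_{n}$, and $\zeta\in\mathbb D_{r}$. Since $e^{-\psi(\phi(\zeta))}=(1-|\phi(\zeta)|^{2})^{-1}$, such a bound immediately yields the desired inequality, with $c$ the reciprocal of the constant obtained. This is essentially a quantitative repackaging of the (unnamed) lemma preceding this one, so the proof is a short computation rather than a new idea.

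First I would control $1-|\phi(\zeta)|$. Integrating $\phi'$ along the radial segment from $0$ to $\zeta$ and invoking Lemma~\ref{derivative} gives
$$
|\phi(\zeta)-\phi(0)|\ \leq\ r\cdot\sup_{\mathbb D_{r}}|\phi'|\ \leq\ r(1-r)^{-2}\Bigl(\tfrac{1}{2}\Bigr)^{n-1}\ =\ \frac{2r}{(1-r)^{2}}\Bigl(\tfrac{1}{2}\Bigr)^{n}
$$
for all $\zeta\in\mathbb D_{r}$. Since $\phi(0)\in A_{n}$ we have $1-|\phi(0)|\leq(\tfrac{1}{2})^{n}$, so by the triangle inequality
$$
1-|\phi(\zeta)|\ \leq\ \bigl(1-|\phi(0)|\bigr)+|\phi(0)-\phi(\zeta)|\ \leq\ \Bigl(\tfrac{1}{2}\Bigr)^{n}\Bigl[1+\frac{2r}{(1-r)^{2}}\Bigr].
$$
Then, using the trivial bound $1+|\phi(\zeta)|\leq 2$,
$$
1-|\phi(\zeta)|^{2}\ =\ \bigl(1-|\phi(\zeta)|\bigr)\bigl(1+|\phi(\zeta)|\bigr)\ \leq\ 2^{-n}\Bigl[2+\frac{4r}{(1-r)^{2}}\Bigr]\ =\ 2^{-n}\cdot\frac{2(1+r^{2})}{(1-r)^{2}},
$$
whence $e^{-\psi(\phi(\zeta))}\geq 2^{n}\cdot\frac{(1-r)^{2}}{2(1+r^{2})}$, and one may take $c=\dfrac{(1-r)^{2}}{2(1+r^{2})}$.

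There is no real obstacle here; the only point worth noting is the uniformity of $c$ in $n$ and in $\phi$, which is automatic because Lemma~\ref{derivative} and the inequality $1-|\phi(0)|\leq 2^{-n}$ already absorb the entire $n$-dependence of the estimate, while the factor $1+|\phi(\zeta)|\leq 2$ is bounded independently of everything. The constant degenerates as $r\to 1$, as it must, since the Poincaré ball of radius $d_{P}(0,r)$ around $\phi(0)$ grows unboundedly in the Euclidean metric near $\partial\mathbb D$.
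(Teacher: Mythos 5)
Your proof is correct and follows exactly the paper's route: the paper's one-line proof invokes the preceding (unnamed) lemma, which in turn is derived from Lemma~\ref{derivative} and the mean value theorem, and your argument simply writes out both of those steps explicitly, arriving at the same bound $1-|\phi(\zeta)|\leq(\tfrac{1}{2})^{n}[1+\tfrac{2r}{(1-r)^{2}}]$. The computation and the uniformity discussion are both fine.
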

\begin{proof}
Use the previous lemma.  
\end{proof}

Let $S$ denote the strip $\{\zeta\in\mathbb C:0<Re(\zeta)< 1\}$.  For $k=0,1,...,2^5-2$  and $l\in\mathbb Z$ let $S_{k,l}$
denote the rectangle 
$$
S_{k,l}:=\{x+iy\in S:k\cdot(\frac{1}{2})^5<x<(k+2)\cdot (\frac{1}{2})^5 \mbox{ and } l\cdot(\frac{1}{2})^8<y< (l+2)\cdot(\frac{1}{2})^8\}.
$$
Choose a partition of unity $\alpha_{k,l}$ with respect to the cover $\{S_{k,l}\}$ of $S$ which is translation 
invariant in the $y$-direction, \emph{i.e.}, $\alpha_{k,l+j}(x,y)=\alpha_{k,l}(x,y-j\cdot(\frac{1}{2})^8)$. Let $c_p>0$ be a constant such that $\|\alpha_{k,l}\|_{\mathcal C^1(S_{k,l})}\leq c_p$
for all $k,l$. \

For $n=1,2,..$ let $f_n:S\rightarrow A_n$ denote the map 
$$
f_n(x,y)=(1-(\frac{1}{2})^n + x\cdot(\frac{1}{2})^{n+1})e^{2\pi i y(\frac{1}{2})^{n+1}}.
$$ 
For $n=1,2,...$ let $\tilde S_{k,l,n}=f_n(S_{k,l})$ for $k=0,...,2^5-2$ and $l=0,1,...,2^{n+9}-1$, 
and let $\tilde\alpha_{k,l,n}$ denote the function $\tilde\alpha_{k,l,n}=\alpha_{k,l}\circ f_n^{-1}$.
Then $\{\tilde\alpha_{k,l,n}\}$ is a partition of unity with respect to the cover $\{\tilde S_{k,l,n}\}$
of $A_n^\circ$.  Note that 
\begin{itemize}
\item[1.] any point $\zeta\in A_n^\circ$ is contained in at most four $\tilde S_{k,l,n}$'s,
\item[2.] if $a\in\tilde S_{k,l,n}$ and $\phi\in\Aut_{hol}\mathbb D$ satisfies $\phi(0)=a$, then 
$\tilde S_{k,l,n}\subset\phi(\mathbb D_{\frac{1}{2}})$ (Lemma \ref{cover}), 
\item[3.] there exists a constant $\tilde c_p>0$ such that $\|\tilde\alpha_{k,l,n}\|_{\mathcal C^1(\tilde S_{k,l,n})}\leq\tilde c_p\cdot 2^{n}$ for all $k,l,n$.
\end{itemize}
It follows from Lemma \ref{cover}, Lemma \ref{derivative}, and $3.$, that 
\begin{itemize}
\item[4.] there exists a constant $\tilde c>0$ such that 
if $a\in\tilde S_{k,l,n}$ and $\phi\in\Aut_{hol}\mathbb D$ satisfies $\phi(0)=a$, then 
$\|\tilde\alpha_{k,l,n}\circ\phi\|_{\mathcal C^1(\phi^{-1}(\tilde S_{k,l,n}))}\leq\tilde c$ for all $k,l,n$.
\end{itemize}
Let $\chi(x)$ be a decreasing function which is one on the interval $(0,\frac{1}{4})$ and which is 
zero on $(\frac{3}{4},1)$.    Let $\tilde\chi_n:=\chi\circ f_n^{-1}$.   We may assume that 
\begin{itemize}
\item[5.]  if $a\in\tilde S_{k,l,n}$ and $\phi\in\Aut_{hol}\mathbb D$ satisfies $\phi(0)=a$, then 
$\|\tilde\chi_{n}\circ\phi\|_{\mathcal C^1(\phi^{-1}(\tilde S_{k,l,n}))}\leq\tilde c$ for all $k,l,n$.
\end{itemize}

\begin{lemma}\label{distconst}
For any $r>0$ there exists a constant $c>0$ such that if $E\subset A_n, n\in\mathbb N,$ is a
set of points with $d_P(e_1,e_2)\geq r$ for all $e_1,e_2\in E$ with $e_1\neq e_2$, 
then $\sharp(E)\leq c\cdot 2^n$.
\end{lemma}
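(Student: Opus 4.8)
The plan is to bound $\sharp(E)$ by a hyperbolic--area count on a slight enlargement of the annulus $A_n$. Recall that the area form of the Poincar\'{e} metric $e^{-\psi}|d\zeta|$ is $e^{-2\psi(\zeta)}\,dA(\zeta)$, where $dA$ is Lebesgue measure, and that since $\Aut_{hol}\mathbb D$ acts transitively by isometries, the Poincar\'{e} area of a Poincar\'{e} disk of radius $\rho$ is a constant $V(\rho)>0$ depending only on $\rho$. Fix $r>0$ and let $E\subset A_n$ be $r$-separated. Write $B_P(e,r/2)$ for the open Poincar\'{e} disk of radius $r/2$ about $e$. By the triangle inequality the disks $\{B_P(e,r/2)\}_{e\in E}$ are pairwise disjoint, and each of them is contained in the open set $N:=\{w\in\mathbb D:d_P(w,A_n)<r/2\}$. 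Hence $\sharp(E)\cdot V(r/2)\le\mathrm{Area}_P(N)$, and it suffices to show $\mathrm{Area}_P(N)\le c(r)\cdot 2^n$.

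The heart of the matter is the comparison $1-|w|^2\asymp 2^{-n}$ on $N$, with implicit constants depending only on $r$; this is where Lemmas \ref{psiaut} and \ref{derivativer} enter. Given $w\in N$, pick $a\in A_n$ with $d_P(w,a)<r/2$ and $\phi\in\Aut_{hol}\mathbb D$ with $\phi(0)=a$, and set $\zeta:=\phi^{-1}(w)$, so that $|\zeta|<r_0:=\tanh(r/2)<1$. Lemma \ref{psiaut} gives $e^{-\psi(w)}=e^{-\psi(\phi(\zeta))}\le 2^{n+3}e^{-\psi(\zeta)}\le 2^{n+3}(1-r_0^2)^{-1}$, while Lemma \ref{derivativer} furnishes a constant $c_0=c_0(r)>0$ with $e^{-\psi(w)}=e^{-\psi(\phi(\zeta))}\ge c_0\cdot 2^n$ (applicable since $|\zeta|<r_0$ and $\phi(0)\in A_n$). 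Since $e^{-\psi(w)}=(1-|w|^2)^{-1}$, we obtain constants $0<c_1(r)\le c_2(r)$ with
\[
c_1(r)\,2^{-n}\ \le\ 1-|w|^2\ \le\ c_2(r)\,2^{-n}\qquad\text{for every }w\in N.
\]

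Now the area bound is a one-line estimate. The lower inequality shows $N\subset\{w\in\mathbb D:1-|w|^2\le c_2(r)2^{-n}\}$, a Euclidean region of Lebesgue measure at most $\pi c_2(r)2^{-n}$; the upper inequality shows $e^{-2\psi(w)}=(1-|w|^2)^{-2}\le c_1(r)^{-2}4^n$ on $N$. Therefore
\[
\mathrm{Area}_P(N)=\int_N e^{-2\psi(w)}\,dA(w)\ \le\ c_1(r)^{-2}4^n\cdot\pi c_2(r)2^{-n}\ =\ \pi c_2(r)c_1(r)^{-2}\cdot 2^n,
\]
and combining with $\sharp(E)\cdot V(r/2)\le\mathrm{Area}_P(N)$ gives $\sharp(E)\le c\cdot 2^n$ with $c:=\pi c_2(r)c_1(r)^{-2}V(r/2)^{-1}$, which depends only on $r$.

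The only step that is not bookkeeping is the two-sided bound $1-|w|^2\asymp 2^{-n}$ on the $(r/2)$-neighborhood $N$ of $A_n$, and with Lemmas \ref{psiaut} and \ref{derivativer} available it is immediate; the rest is the standard ``disjoint balls fit in a region of area $\asymp 2^n$'' count. If one prefers to avoid hyperbolic area, the same bound follows from the cover $\{\tilde S_{k,l,n}\}$ of $A_n^\circ$: by item~2 above each $\tilde S_{k,l,n}$ lies in a Poincar\'{e} disk of radius $\tfrac12\log 3$, hence meets $E$ in at most a constant $N(r)$ of points (the maximal number of $r$-separated points in a disk of fixed radius, finite by the disjoint-balls argument), and there are at most $2^5\cdot 2^{n+9}$ such sets; the remaining set $A_n\setminus A_n^\circ$ is the circle $\{|\zeta|=1-(\tfrac12)^n\}$, of Poincar\'{e} length $\asymp 2^n$, which therefore carries at most $C(r)\cdot 2^n$ points of $E$ as well.
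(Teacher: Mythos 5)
Your proof is correct, but it runs along a genuinely different line from the paper's. You use a packing (volume) argument: the Poincar\'{e} disks $B_P(e,r/2)$ are disjoint, each has fixed hyperbolic area $V(r/2)$, and they all sit inside the $(r/2)$-neighborhood $N$ of $A_n$, whose hyperbolic area you bound by $c(r)\cdot 2^n$ via the two-sided comparison $1-|w|^2\asymp 2^{-n}$ on $N$ (correctly extracted from Lemmas \ref{psiaut} and \ref{derivativer}, applied at the automorphism centered at a nearby point of $A_n$). The paper instead uses a covering argument: it redoes the construction of the rectangles $S_{k,l}$ at a finer scale depending on $r$, so that each image cube $\tilde S_{k,l}$ has Poincar\'{e} diameter less than $r$ by Lemma \ref{cover}; then each cube contains at most one point of $E$, and $O_r(2^n)$ cubes cover $A_n$. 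Your closing remark --- using the existing cover $\{\tilde S_{k,l,n}\}$ at scale $1/2$ and allowing each piece to carry at most $N(r)$ points of an $r$-separated set --- is essentially the paper's method in a coarser form, modulo your (correct) separate treatment of the boundary circle of $A_n$. The volume argument is arguably cleaner and avoids rebuilding the cover at a new scale; the covering argument avoids any area computation and stays entirely within the lemmas already set up in Section \ref{preliminaries}. One cosmetic slip: after your displayed two-sided bound, the phrases ``lower inequality'' and ``upper inequality'' are swapped relative to the display (the containment $N\subset\{1-|w|^2\le c_2(r)2^{-n}\}$ comes from the upper bound on $1-|w|^2$), but the estimates themselves are used correctly.
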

\begin{proof}
Fix $k\in\mathbb N$ such that the Poincar\'{e} radius of the disk $\mathbb D_{(\frac{1}{2})^k}$ is less than $\frac{r}{2}$.  By Lemma \ref{cover} we have that 
if $\phi\in\Aut_{hol}\mathbb D$ with $\phi(0)\in A_n$ then $D_{(\frac{1}{2})^{n+k+3}}(\phi(0))\subset\phi(\mathbb D_{(\frac{1}{2})^k})$.   Copy the construction of 
the cubes $S_{k,l}$ as above, but with sides of length $(\frac{1}{2})^{k+3}$
and $(\frac{1}{2})^{k+6}$ respectively.   Then the corresponding 
cubes $\tilde S_{k,l}$ have diameters less than $(\frac{1}{2})^{n+k+3}$, and a number
$2^{k+4-2}\cdot (2^{n+k+8}-1)$ of cubes is needed to cover $A_n$.

\end{proof}

\section{Families of line bundles over the disk}

Our approach to solve $\overline\partial_b$ on lamination will be to solve 
$\overline\partial$ for sections of positive line bundles over $\mathbb D$.  
Let $L\rightarrow\mathbb D$ be a line bundle with a positive metric $\sigma$. 
Since any line bundle over $\mathbb D$ is trivial, we may solve $\overline\partial$
using H\"{o}rmander: assume that $dd^c\sigma\geq c\cdot dV$, and let $v\in L^2_{(0,1)}(L,\sigma)$.  Then there exists $u\in L^2(L,\sigma)$ with $\overline\partial u=v$ and 
$$
\underset{\mathbb D}{\int\int}|u|^2e^{-\sigma}dV\leq\frac{1}{c}\underset{\mathbb D}{\int\int}|v|^2e^{-\sigma}dV.
$$
We need to study how these (canonical) solutions vary for certain families of line bundles over $\mathbb D$. \

Given an open set $U\subset\mathbb C$ 
we let $\|\cdot\|_{U,1}$ denote the norm 
$$
\|g\|_{U,1}:=\underset{\zeta\in U,s+t\leq 1}{\sup}\{|(\partial^{s+t}g)/(\partial x^s\partial y^t)(\zeta)|
\}, 
$$
defined for each $g\in\mathcal C^1(U)$.   
Note that if $\sigma_1$ and $\sigma_2$ are metrics on a line bundle $L\rightarrow U$, then the difference $\sigma_1-\sigma_2$ is a function on $U$.

Let $\{U_j\}_{j=1}^\infty$ be a locally finite cover of the disk $\mathbb D$, 
and let $\mathbb T$ be a topological space.  We shall consider 
families of line bundles over $\mathbb D$ parametrized by $\mathbb T$. 
A line bundle $L_t$ is given by a collection of transition functions 
$f_{t,i,j}\in\mathcal O(U_{ij})$, and a metric $\sigma_t$ on $L_t$
is given by a collection $\sigma_{t,j}$ of locally integrable functions, satisfying 
the compatibility condition
$$
\sigma_{t,i}-\sigma_{t,j}=2\cdot \log|f_{t,i,j}|
$$
on $U_{ij}$.  We will assume that 
\begin{itemize}
\item[1.] there exists a constant $c>0$ such that $dd^c\sigma_t\geq\frac{c}{(1-|\zeta|^2)^2}dV$ for all $t\in\mathbb T$,  
\item[2.] for any pair $i,j\in\mathbb N$ and any $t_0\in\mathbb T$ we have that $\|f_{t,i,j}-f_{t_0,i,j}\|_{U_{ij},1}\rightarrow 0$ as $t\rightarrow t_0$, and
\item[3.] for any $j\in\mathbb N$ and any $t_0\in\mathbb T$ we have that $\|\sigma_{t,j}-\sigma_{t_0,j}\|_{U_j,1}\rightarrow 0$ as $t\rightarrow t_0$.
\end{itemize}

\begin{remark}
By $3.$ it is understood that the non-smooth parts of the metrics cancel.  
\end{remark}

We may of course regard the union of the $L_t$-s as a bundle over $\mathbb D\times\mathbb T$. 
We denote this bundle by $L_{\mathbb T}$. \

\begin{remark}\label{invariantconstant}
Note that 
$$
dd^c(s\psi)(\zeta)=\frac{-4s}{(1-|\zeta|^2)^2}dV, 
$$
so if $c>4s$ we may solve $\overline\partial$ for sections in $L^2(L_t,\sigma_t+s\psi)$ with 
estimates: $dd^c(\sigma_t+s\psi)\geq (c-4s)dV$.  Also, if $\varphi\in\Aut_{hol}\mathbb D$ then 
$dd^c (\varphi^*\psi)=dd^c\psi$, and so $dd^c(\varphi^*\sigma_t + s\psi)\geq
(c-4s)dV$.  
\end{remark}

The following is the main result of this section. 

\begin{theorem}\label{continuous}
Let $\{L_t,\sigma_t\}_{t\in\mathbb T}$ be a family of line bundles satisfying 1.-- 3. Let 
$s\in\mathbb N$ and assume that $c>4s$.  Let $V\subset\subset\mathbb D$ be a domain, and
let $v_t\in \mathcal C_{(0,1)}(L_t,\sigma_t)$ be a continuous family of forms 
supported in $V$.  For each $t\in\mathbb T$ let $u_t$ be the $L^2(L_t,\sigma_t+s\psi)$-minimal solution to the equation $\overline\partial u_t=v_t$.  Then $u_t$ is a continuous 
section of $L_\mathbb T$.
\end{theorem}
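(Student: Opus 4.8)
The plan is to reduce the continuity of $t\mapsto u_t$ to the continuity statement for the $\overline\partial$-equation on the fixed disk $\mathbb D$ that was recorded in Section~2: if $w_t$ is a family of $L^2$-forms on $\mathbb D$, continuous in $t$, and $h_t$ are weak solutions of $\overline\partial h_t = w_t$ with $\|h_t-h_{t_0}\|_{L^2(\mathbb D)}\to 0$, then $h_t(\zeta)$ is jointly continuous. So the real content is to show that the $L^2(L_t,\sigma_t+s\psi)$-minimal solutions $u_t$ satisfy $\|u_t - u_{t_0}\|\to 0$ as $t\to t_0$, once everything is trivialized. First I would fix $t_0\in\mathbb T$ and trivialize all the bundles $L_t$ simultaneously on a neighborhood of $\overline V$: since $V\Subset\mathbb D$, only finitely many $U_j$ meet $\overline V$, and by assumption~2 the transition functions $f_{t,i,j}\to f_{t_0,i,j}$ in $\mathcal C^1$, so we may choose (depending continuously on $t$, via assumption~2 and the argument that weakly holomorphic limits are holomorphic) holomorphic trivializations that converge in $\mathcal C^1$ on a fixed slightly larger domain $V'\Subset\mathbb D$. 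Under such a trivialization the metric $\sigma_t$ becomes an honest function $\varphi_t$ on $V'$, and by assumption~3 (with the remark that the singular parts cancel) we get $\varphi_t\to\varphi_{t_0}$ in $\mathcal C^1(V')$, in particular uniformly; note $v_t$, being supported in $V$, becomes an ordinary $(0,1)$-form $\tilde v_t$ with coefficients continuous in $(t,\zeta)$ and $\|\tilde v_t-\tilde v_{t_0}\|_{L^2}\to 0$.

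The second step is to understand the minimal solution under the trivialization. Outside $V'$ the bundle need not be globally trivial, but that does not matter: the $L^2(L_t,\sigma_t+s\psi)$-minimal solution $u_t$ is characterized by $\overline\partial u_t=v_t$ together with orthogonality to the kernel of $\overline\partial$, i.e.\ to the $L^2(L_t,\sigma_t+s\psi)$-holomorphic sections. Globally trivialize $L_t$ over all of $\mathbb D$ by a section $e_t$ (possible since every line bundle over the disk is trivial); write $u_t = \hat u_t\, e_t$, and let $\rho_t$ be the resulting weight $e^{-\rho_t}=\|e_t\|^2 e^{-s\psi}$ on the scalar side. Then $\hat u_t$ is the $L^2(\mathbb D, e^{-\rho_t})$-minimal solution of $\overline\partial \hat u_t = \hat v_t$, where $\hat v_t$ is supported in $V$. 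The point is that on $V'$ the ratio $e_t/(\text{the local trivialization of step 1})$ is a nonvanishing holomorphic function converging in $\mathcal C^1$; so although the \emph{global} weight $\rho_t$ is only controlled on $V'$, that is all we need because the data $\hat v_t$ lives there.

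The third and central step is the $L^2$-convergence $\|\hat u_t - \hat u_{t_0}\|_{L^2(\mathbb D,e^{-\rho_{t_0}})}\to 0$. Here I would use the standard comparison trick for minimal solutions, exploiting H\"ormander's estimate with the uniform lower bound from assumption~1 together with the Remark: $dd^c(\sigma_t+s\psi)\ge (c-4s)\,dV$ for \emph{all} $t$. Fix a minimal solution $u_{t_0}$; push it over to the $L_t$-trivialization on $V'$ and extend by cutting off to get an (approximate) solution of $\overline\partial = v_t$ for $L_t$; the error $\overline\partial(\text{difference})$ is small in $L^2(e^{-\rho_t})$ by Step 1 (the cutoff is applied where $\hat v_t\equiv 0$ anyway, so one arranges the cutoff to interpolate between the two solutions in a region where both are holomorphic — this is exactly the place where one needs the weights to be uniformly comparable on $V'$), solve away the error with H\"ormander using the uniform constant, and conclude $\|u_t - (\text{transported }u_{t_0})\|\to 0$; minimality of $u_t$ then gives $\limsup \|u_t\|\le \|u_{t_0}\|$, and a symmetric argument plus weak convergence upgrades this to norm convergence $\|u_t - u_{t_0}\|\to 0$. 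Finally, feed $\hat u_t$, $\hat v_t$ into the Section~2 continuity principle on $\mathbb D$ to get joint continuity of $\hat u_t(\zeta)$, hence of $u_t$ as a section of $L_{\mathbb T}$.

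I expect the main obstacle to be Step 3, specifically making the comparison between $u_t$ and $u_{t_0}$ rigorous when the two solutions are sections of \emph{different} line bundles whose global trivializations are uncontrolled away from $V'$: one must route the comparison entirely through the region $V'$ where assumptions~1--3 give uniform control, using that the forms are supported in $V\Subset V'$ so that the "gluing" of an $L_t$-solution to an $L_{t_0}$-solution happens in the free region where both are holomorphic and the weights differ by a bounded factor. The uniform ellipticity constant $c-4s>0$ from the Remark is what keeps all the H\"ormander estimates uniform in $t$, which is essential for the $\limsup$ argument to close.
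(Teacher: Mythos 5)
Your overall frame is the right one --- reduce everything to $L^2$-convergence of the minimal solutions and then invoke the remark from Section 2 to upgrade $L^2$-continuity in $t$ to joint continuity --- and your Steps 1--2 (local trivialization near $\overline V$, characterization of $u_t$ by orthogonality to $\mathcal OL^2$) are fine. But Step 3, which you yourself flag as the main obstacle, has a genuine gap that the sketch does not close. If you transport $u_{t_0}$ to the $L_t$-side on $V'$ and cut off with $\chi$ supported in $V'$, the error $\overline\partial\chi\cdot u_{t_0}$ is supported in $V'\setminus V$ and has $L^2$-norm comparable to $\|u_{t_0}\|_{L^2(V'\setminus V)}$ --- a fixed positive quantity, not $o(1)$ as $t\to t_0$. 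There is nothing to ``interpolate to'' outside $V'$: the competitor you produce does not satisfy $\|w_t\|_t\le\|u_{t_0}\|_{t_0}+o(1)$, so the $\limsup$ inequality never gets off the ground. The deeper issue is that minimality is a global condition (orthogonality to the infinite-dimensional space of holomorphic $L^2$-sections), decided by the weight and transition functions all the way out to $\partial\mathbb D$, while hypotheses 2--3 give convergence only on each fixed $U_j$, i.e.\ locally uniformly in $\mathbb D$, with no uniformity near the boundary. Two solutions of $\overline\partial u=v_t$ differ by a global holomorphic section, so a comparison ``routed entirely through $V'$'' cannot identify the minimal one; likewise the weak-limit step of your argument cannot identify the limit as the $t_0$-minimal solution without boundary-uniform control of the holomorphic $L^2$-spaces.

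The paper resolves exactly this point with a two-scale localization that your proposal is missing. One introduces the minimal solution $u_{t,n}$ of the same equation on the relatively compact subdisk $\mathbb D(n)$; on a fixed $\mathbb D(n_0)$ the weights do converge in $\mathcal C^1$, and Lemma \ref{closemetrics} compares $u_{t,n_0}$ with $u_{t_0,n_0}$. The global minimal solution is then compared with $u_{t,n_0}$ via Corollary \ref{globalvsn}, which bounds $\|u_t-u_{t,n}\|$ by $c_2\|u_{t,n}\|_{L^2(A_n)}$ with $c_2$ independent of $n$ and $t$; the right-hand side is made small uniformly in $t$ near $t_0$ using the comparability of weights on the compact set. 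The constant-independent-of-$n$ claim is the hard part: cutting off near $\partial\mathbb D$ produces an error $\overline\partial\tilde\chi_n\cdot u$ with $|\overline\partial\tilde\chi_n|\sim 2^n$ on $A_n$, and a naive H\"ormander estimate loses a factor $2^{2n}$. Proposition \ref{debarondisk} removes this loss by decomposing $A_n$ into the pieces $\tilde S_{k,l,n}$ and pulling each piece back by an automorphism of $\mathbb D$ centered in $A_n$ (the content of Section 3). Some substitute for this boundary analysis is indispensable; without it the comparison of global minimal solutions across varying weights, with only the uniform curvature bound 1.\ holding near $\partial\mathbb D$, cannot be closed.
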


We prove first some intermediate results, and then we prove the theorem at 
the end of the section .  

\begin{proposition}\label{debarondisk}
Let $\{L_t,\sigma_t\}_{t\in\mathbb N}$ be a family of line bundles satisfying 1--3,  let $s\in\mathbb N$ and 
assume that $c>4s$.  There exists a constant $c_1>0$ such that the following holds:
 
For any $t\in\mathbb T$ and for any section $u\in\mathcal OL^2(L_t|_{A_n},\sigma_t +s\psi)$, 
define $v:=\overline\partial(\tilde\chi_n\cdot u)$ ($v=0$ over $\mathbb D\setminus A_n$).
Then there exists $u_n\in \mathcal C^\infty L^2(L_t,\sigma_t+s\psi)$ with $\overline\partial u_n=v$, and 
$$
\underset{\mathbb D}{\int\int}|u_n|^2e^{-(\sigma_t+s\psi)}dV\leq c_1\underset{A_n}{\int\int}|u|^2e^{-(\sigma_t+s\psi)}dV.
$$
\end{proposition}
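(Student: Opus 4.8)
The plan is to estimate the $L^2$-norm of $v = \overline\partial(\tilde\chi_n\cdot u) = (\overline\partial\tilde\chi_n)\cdot u$ and then apply the Hörmander estimate of Remark \ref{invariantconstant} to produce the solution $u_n$. Since $\overline\partial u_n = v$ is solvable with the estimate
$$
\underset{\mathbb D}{\int\int}|u_n|^2 e^{-(\sigma_t+s\psi)}\,dV \leq \frac{1}{c-4s}\underset{\mathbb D}{\int\int}|v|^2 e^{-(\sigma_t+s\psi)}\,dV,
$$
everything reduces to bounding $\int\int_{A_n}|\overline\partial\tilde\chi_n|^2\,|u|^2 e^{-(\sigma_t+s\psi)}\,dV$ by $c_1\int\int_{A_n}|u|^2 e^{-(\sigma_t+s\psi)}\,dV$ with $c_1$ independent of $n$ and $t$. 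So the whole point is to show that the weight $|\overline\partial\tilde\chi_n|^2$ (measured with respect to the \emph{Poincaré} metric, since $v$ is a $(0,1)$-form and the pointwise norm of a form in the integrand uses the metric on $\mathbb D$) is uniformly bounded.

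First I would recall that $\tilde\chi_n = \chi\circ f_n^{-1}$, where $f_n\colon S\to A_n$ is the explicit biholomorphism from Section \ref{preliminaries}, and that $\chi$ is a fixed cutoff on the strip. The key computation is that the map $f_n$ distorts the Euclidean metric on $S$ into something comparable to $2^{-n}$ times the Euclidean metric on $A_n$, but comparable to a \emph{constant} multiple of the Poincaré metric on $A_n$ (this is exactly the content of estimates i)--iii) following the definition of $A_n$, together with the fact that $f_n$ was built to be a "Poincaré-normalized" chart of the annulus). Concretely, $|\overline\partial\tilde\chi_n(\zeta)| = |(\overline\partial\chi)(f_n^{-1}(\zeta))|\cdot|(f_n^{-1})'(\zeta)|$, and since $e^{-\psi(\zeta)}\asymp 2^n$ on $A_n$ by ii), the Poincaré norm $e^{\psi(\zeta)}|\overline\partial\tilde\chi_n(\zeta)|$ is $\asymp 2^{-n}|(\overline\partial\chi)(f_n^{-1}(\zeta))|\cdot|(f_n^{-1})'(\zeta)|$, and the Euclidean derivative $|(f_n^{-1})'|$ is $\asymp 2^n$ on $A_n$. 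Hence the Poincaré-norm of $\overline\partial\tilde\chi_n$ is bounded by a constant times $\|\overline\partial\chi\|_\infty$, uniformly in $n$. This is the analogue of item 5.\ in Section \ref{preliminaries}, and indeed one expects to simply invoke that item (it asserts uniform $\mathcal C^1$-bounds for $\tilde\chi_n$ after pulling back by automorphisms, which is the same phenomenon in a slightly different guise).

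With the uniform bound $|\overline\partial\tilde\chi_n|^2_{P}\leq C$ in hand (Poincaré norm), the integrand satisfies $|v|^2_P\, e^{-(\sigma_t+s\psi)}\leq C\,|u|^2\,e^{-(\sigma_t+s\psi)}$ pointwise on $A_n$, and integrating gives the claimed estimate with $c_1 := C/(c-4s)$, which depends only on $\chi$, $c$, and $s$ — not on $n$ or $t$. The smoothness of $u_n$ away from where it matters is automatic from elliptic regularity of $\overline\partial$ since $v$ is smooth (here $u\in\mathcal OL^2$ is holomorphic on $A_n$ so $\tilde\chi_n u$ is smooth and $v$ is smooth). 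The main obstacle, and the only genuinely non-formal point, is the uniform-in-$n$ control of $\overline\partial\tilde\chi_n$ in the Poincaré metric; once one is comfortable that the cutoffs $\tilde\chi_n$ were constructed precisely so that their Poincaré-gradients are uniformly bounded (equivalently, that $f_n$ is a Poincaré-bounded-geometry parametrization of $A_n$), the rest is the standard Hörmander argument.
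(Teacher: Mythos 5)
Your proof is correct, but it takes a genuinely different route from the paper's. The paper does not estimate $\overline\partial(\tilde\chi_n\cdot u)$ globally; instead it decomposes $v$ via the partition of unity $\{\tilde\alpha_{k,l,n}\}$ into pieces supported in the small sets $\tilde S_{k,l,n}$, pulls each piece back to a fixed compact part of $\mathbb D$ by an automorphism $\varphi$ with $\varphi(0)\in\tilde S_{k,l,n}$ (so that items 4.\ and 5.\ of Section \ref{preliminaries} give \emph{Euclidean} $\mathcal C^1$-bounds on the cutoffs that are uniform in $k,l,n$), solves there with the constant-curvature H\"ormander estimate $dd^c(\varphi^*\sigma_t+s\psi)\geq(c-4s)dV$, pushes forward, and sums using the bounded overlap of the $\tilde S_{k,l,n}$ and Lemma \ref{psiaut} to control the weight distortion. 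You instead apply H\"ormander once, globally, keeping the full hyperbolic curvature lower bound $dd^c(\sigma_t+s\psi)\geq\frac{c-4s}{(1-|\zeta|^2)^2}dV$ from hypothesis 1.\ and Remark \ref{invariantconstant}, so that the relevant norm of $v$ is the Poincar\'e norm, and then you observe that $e^{\psi}|\overline\partial\tilde\chi_n|\asymp 2^{-n}\cdot 2^{n}=O(1)$ uniformly in $n$. Both arguments exploit exactly the same cancellation between $|\nabla\tilde\chi_n|_{\mathrm{Eucl}}\sim 2^{n}$ and $(1-|\zeta|^2)\sim 2^{-n}$ on $A_n$; the paper realizes it through conformal invariance and localization, you through the sharp form of the $L^2$ estimate. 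Your version is shorter and avoids the overlap counting, but note one point: the only H\"ormander inequality actually recorded in Section 4 of the paper is the constant-coefficient one, $\int|u|^2e^{-\sigma}\leq\frac{1}{c}\int|v|^2e^{-\sigma}$ with $dd^c\sigma\geq c\,dV$ and Euclidean norms; if you apply that version (using only $(1-|\zeta|^2)^{-2}\geq 1$), the argument loses a factor $4^{n}$ and fails. So you must explicitly invoke the pointwise form $\int|u|^2e^{-\phi}\leq\int\frac{|f|^2}{\phi_{\zeta\overline\zeta}}\,e^{-\phi}$ with the variable lower bound on $\phi_{\zeta\overline\zeta}$ — which is standard, but is not literally the estimate stated in Remark \ref{invariantconstant} that your first sentence cites.
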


\begin{proof}
We use the partition of unity $\{\tilde\alpha_{k,l,n}\}$ with respect to $\{\tilde S_{k,l,n}\}$
defined in Section \ref{preliminaries}, and we write 
$$
v=\overline\partial(\tilde\chi_n\cdot u)=\sum_{k,l} \tilde\alpha_{k,l,n}\cdot\overline\partial(\tilde\chi_n\cdot u).
$$
Note that $v_{k,l,n}:=\tilde\alpha_{k,l,n}\cdot\overline\partial(\tilde\chi_n\cdot u)$ is $\mathcal C^\infty$-smooth
on $\mathbb D$ and is supported in $\tilde S_{k,l,n}$.

\begin{lemma}\label{debarsquare}
There exists a constant 
$c_3>0$, independent of $k,l,n$, such that the following holds:

There exists a section $u_{k,l,n}\in\mathcal C^\infty L^2(L_t,\sigma_t+s\psi)$
with $\overline\partial u_{k,l,n}=v_{k,l,n}$, and 
$$
\underset{\mathbb D}{\int\int}|u_{k,l,n}|^2e^{-(\sigma_t+s\psi)}dV\leq c_3\underset{\tilde S_{k,l,n}}{\int\int}|u|^2e^{-(\sigma_t+s\psi)}dV
$$
\end{lemma}
\begin{proof}
Let $\varphi\in\Aut_{hol}\mathbb D$ such that $\varphi(0)\in\tilde S_{k,l,n}$
and, consequently, 
$\tilde S_{k,l,n}\subset\varphi(\mathbb D_{\frac{1}{2}})$.
Let $v_{k,l,n}^*$ denote the section $v_{k,l,n}^*:=\varphi^*v_{k,l,n}$ of the bundle $\varphi^*L_t$.
We want to solve $\overline\partial u_{k,l,n}^*=v_{k,l,n}^*$, and then push the solution back forward using 
$\varphi$.   We use the metric $\varphi^*\sigma_t+s\psi$ (see Remark \ref{invariantconstant}).  
Note that, by 4. and 5. in Section \ref{preliminaries}, $v_{k,l,n}^*=\varphi^*\tilde\alpha_{k,l,n}\cdot\overline\partial [\varphi^*(\tilde\chi_n\cdot u)] = 
((\tilde\alpha_{k,l,n}\cdot u)\circ\varphi)\cdot\overline\partial [\varphi^*(\tilde\chi_n)] $, and so 
$|v_{k,l,n}^*|^2\leq c_4\cdot |\varphi^*u|^2$, where $c_4$ is independent of $k,l,n$ and $t$.  We have that 

\begin{align*}
\underset{\mathbb D}{\int\int}|v_{k,l,n}^*|^2e^{-(\varphi^*\sigma_t+s\psi)}dV & \leq 
c_4\underset{\varphi^{-1}(\tilde S_{k,l,n})}{\int\int}|\varphi^*u|^2e^{-(\varphi^*\sigma_t+s\psi)}dV\\
& =
c_4\underset{\tilde S_{k,l,n}}{\int\int}\varphi_*[|\varphi^*u|^2e^{-(\varphi^*\sigma_t+s\psi)}dV]\\
& = c_4\underset{\tilde S_{k,l,n}}{\int\int}|u|^2e^{-\sigma_t}\cdot e^{-2\psi}\cdot e^{-(s-2)(\psi\circ\varphi^{-1})}dV\\
& \leq c_5 \underset{\tilde S_{k,l,n}}{\int\int}|u|^2e^{-\sigma_t}\cdot e^{-2\psi}dV\\
& \leq c_6((\frac{1}{2})^n)^{s-2}\underset{\tilde S_{k,l,n}}{\int\int}|u|^2e^{-\sigma_t-s\psi}dV.
\end{align*}
By H\"{o}rmander there exists a section $u_{k,l,n}^*$
solving $\overline\partial u_{k,l,n}^*=v_{k,l,n}^*$ with 
$$
\underset{\mathbb D}{\int\int}|u_{k,l,n}^*|^2e^{-(\varphi^*\sigma_t+s\psi)}dV\leq (c-4s)^{-1}\cdot
c_6((\frac{1}{2})^n)^{s-2}\underset{\tilde S_{k,l,n}}{\int\int}|u|^2e^{-\sigma_t-s\psi}dV.
$$
Now let $u_{k,l,n}:=\varphi_*u_{k,l,n}^*$.  We get that   
\begin{align*}
\underset{\mathbb D}{\int\int}|u_{k,l,n}|^2e^{-(\sigma_t+s\psi)}dV & =
\underset{\mathbb D}{\int\int}\varphi^*[|u_{k,l,n}|^2e^{-(\sigma_t+s\psi)}dV]\\
& =  \underset{\mathbb D}{\int\int}|u_{k,l,n}^*|^2e^{-\varphi^*\sigma_t}\cdot e^{-2\psi}\cdot e^{-(s-2)(\psi\circ\varphi)}dV\\
& \leq (2^{n+3})^{s-2}\underset{\mathbb D}{\int\int}|u_{k,l,n}^*|^2e^{-\varphi^*\sigma_t}\cdot e^{-2\psi}\cdot e^{-(s-2)\psi}dV\\
& \leq (c-4s)^{-1}\cdot c_6\cdot 2^{3(s-2)}\underset{\tilde S_{k,l,n}}{\int\int}|u|^2e^{-\sigma_t-s\psi}dV,
\end{align*}
where in the first inequality we used Lemma \ref{psiaut}.
\end{proof}

By Lemma \ref{debarsquare} there exists for each pair $k,l$ a section $u_{k,l,n}$
solving $\overline\partial u_{k,l,n}=\tilde\alpha_{k,l,n}\cdot\overline\partial(\tilde\chi_n\cdot u)$, with 
$$
\underset{\mathbb D}{\int\int}|u_{k,l,n}|^2e^{-(\sigma_t+s\psi)}dV\leq c_3\underset{\tilde S_{k,l,n}}{\int\int}|u|^2e^{-(\sigma_t+s\psi)}dV.
$$
Define $u_n:=\sum_{k,l}u_{k,l,n}$.  Since any point $\zeta\in A_n$ intersects at most 
four $\tilde S_{k,l,n}$-s we get that 
$$
\underset{\mathbb D}{\int\int}|u_n|^2e^{-(\sigma_t+s\psi)}dV\leq 4\cdot c_3\underset{A_n}{\int\int}|u|^2e^{-(\sigma_t+s\psi)}dV.
$$
\end{proof}

\begin{corollary}\label{globalvsn}
There exists a constant $c_2$ such that the following holds.  Let $U\subset\subset\mathbb D$
and choose $N\in\mathbb N$ such that $A_n\cap\overline U=\emptyset$ for all $n\geq N$.
Let $v\in L_{(0,1)}^2(L_t,\sigma_t+s\psi)$ with $v$ supported in $U$.  Let $u_n$
be the $L^2(L_t|_{\mathbb D(n)},\sigma_t+s\psi)$-minimal solution to $\overline\partial u_n=v$,
and let $u$ be the $L^2(L_t,\sigma_t+s\psi)$-minimal solution to $\overline\partial u=v$.  We extend $u_n$ to $\mathbb D$ by setting $u_n=0$ outside $\mathbb D(n).$
Then 
$$
\|u_n-u\|_{L^2(L_t,\sigma_t+s\psi)}\leq c_2\cdot  \|u_n\|_{L^2(L_t|_{A_n},\sigma_t+s\psi)}.
$$
\end{corollary}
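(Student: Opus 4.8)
The plan is to manufacture from $u_n$ a genuine global $L^2$ solution $\hat u$ of $\overline\partial\hat u=v$ on $\mathbb D$ that coincides with $u_n$ outside the annulus $A_n$, and then to extract the estimate from the minimality of $u$ together with an orthogonality property of the truncated minimal solutions.

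First, cut $u_n$ off by the function $\tilde\chi_n$ of Section \ref{preliminaries}. Since $\overline U\subset\mathbb D(N-1)\subset\mathbb D(n-1)$ for $n\geq N$, the function $\tilde\chi_n$ equals $1$ on a neighborhood of $\mathrm{supp}\,v$ and is supported in a compact subset of $\mathbb D(n)^\circ$; hence $\tilde\chi_n u_n$, extended by zero, is a well-defined section in $L^2(L_t,\sigma_t+s\psi)$ on $\mathbb D$, and since it is compactly supported in $\mathbb D(n)^\circ$ no boundary current appears and the product rule gives $\overline\partial(\tilde\chi_n u_n)=v+\overline\partial\tilde\chi_n\cdot u_n$ distributionally. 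On the support of $\overline\partial\tilde\chi_n$, a compact subset of $A_n^\circ$, the section $u_n$ is holomorphic because $v$ vanishes there; thus $u_n|_{A_n}\in\mathcal OL^2(L_t|_{A_n},\sigma_t+s\psi)$, and Proposition \ref{debarondisk} produces $w_n\in\mathcal C^\infty L^2(L_t,\sigma_t+s\psi)$ with $\overline\partial w_n=\overline\partial\tilde\chi_n\cdot u_n$ and $\|w_n\|^2\leq c_1\|u_n\|^2_{L^2(L_t|_{A_n},\sigma_t+s\psi)}$. Setting $\hat u:=\tilde\chi_n u_n-w_n$ then gives $\overline\partial\hat u=v$ on $\mathbb D$, and since $1-\tilde\chi_n$ is bounded by $1$ and supported in $A_n$ we obtain $\|u_n-\hat u\|=\|(1-\tilde\chi_n)u_n+w_n\|\leq(1+\sqrt{c_1})\,\|u_n\|_{L^2(L_t|_{A_n},\sigma_t+s\psi)}$.

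Next, let $P$ denote the orthogonal projection of $L^2(L_t,\sigma_t+s\psi)$ onto its closed subspace $\mathcal H$ of holomorphic sections. The set of $L^2$ solutions of $\overline\partial\,\cdot=v$ is $\hat u+\mathcal H$, so the minimal solution is $u=\hat u-P\hat u$, i.e. $\hat u-u=P\hat u$. The key observation is that $Pu_n=0$: for any $h\in\mathcal H$, since $u_n$ vanishes off $\mathbb D(n)$ we have $\langle u_n,h\rangle_{L^2(L_t,\sigma_t+s\psi)}=\langle u_n,h|_{\mathbb D(n)}\rangle_{L^2(L_t|_{\mathbb D(n)},\sigma_t+s\psi)}$, and $h|_{\mathbb D(n)}$ lies in the space of holomorphic $L^2(L_t|_{\mathbb D(n)},\sigma_t+s\psi)$ sections, to which $u_n$ — being the $L^2(L_t|_{\mathbb D(n)},\sigma_t+s\psi)$-minimal solution on $\mathbb D(n)$ — is orthogonal. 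Hence $\hat u-u=P\hat u=P(\hat u-u_n)$, so $\|\hat u-u\|\leq\|\hat u-u_n\|\leq(1+\sqrt{c_1})\|u_n\|_{L^2(L_t|_{A_n},\sigma_t+s\psi)}$; combining with the previous estimate, $\|u_n-u\|_{L^2(L_t,\sigma_t+s\psi)}\leq 2(1+\sqrt{c_1})\|u_n\|_{L^2(L_t|_{A_n},\sigma_t+s\psi)}$, which is the assertion with $c_2:=2(1+\sqrt{c_1})$ — a constant depending only on $c_1$, hence independent of $t$, $n$, $U$ and $v$.

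The routine ingredients are the product-rule identity $\overline\partial(\tilde\chi_n u_n)=v+\overline\partial\tilde\chi_n\cdot u_n$ with the remark that the zero-extension contributes no boundary term (because $\tilde\chi_n$ vanishes near $\partial\mathbb D(n)$ and is identically $1$ near $\mathrm{supp}\,v$), and the interior regularity making $u_n$ holomorphic on $A_n$. The step that genuinely carries the estimate, and which I would want to spell out carefully, is the orthogonality relation $Pu_n=0$: without it the holomorphic component $P\hat u$ of $\hat u$ would be uncontrollable, whereas this relation lets one discard it and bound the entire error by the single quantity $\|u_n\|_{L^2(L_t|_{A_n},\sigma_t+s\psi)}$ on the right-hand side.
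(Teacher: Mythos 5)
Your proposal is correct and follows essentially the same route as the paper: cut off $u_n$ with $\tilde\chi_n$, correct the error term $\overline\partial\tilde\chi_n\cdot u_n$ via Proposition \ref{debarondisk}, and exploit the orthogonality of the truncated minimal solution $u_n$ (extended by zero) to the global holomorphic sections in order to control the holomorphic component of the resulting global solution. The paper phrases that last step as a duality bound $|\langle\tilde u_n,f\rangle|=|\langle(1-\chi_n)u_n,f\rangle|\leq\|u_n\|_{L^2(L_t|_{A_n},\sigma_t+s\psi)}$ over holomorphic $f$ of unit norm, whereas you use $Pu_n=0$ and the contractivity of the projection, which is the same idea with marginally different bookkeeping.
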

\begin{proof}
We let $\chi_n$ denote $\tilde\chi_n$ extended to be $1$ on $\mathbb D(n-1)$.  Let 
$\tilde u_n:=\chi_n\cdot u_n$ and let $\tilde u_n'=\tilde\chi_n\cdot u_n$ ($\tilde u_n'=0$ on $\mathbb D(n-1)$).  We have that $\overline\partial\tilde u_n=v + \overline\partial\tilde u_n'$.    Solve 
$\overline\partial u'=\overline\partial\tilde u_n'$ according to Proposition \ref{debarondisk}.  Then 
$\overline\partial (\tilde u_n - u')=v$.  If we let $\pi$ denote the orthogonal projection 
$\pi:L^2(L_t,\sigma_t+s\psi)\rightarrow\mathcal OL^2(L_t,\sigma_t+s\psi)^\perp$, we need 
to estimate $\|u_n - \pi(\tilde u_n - u')\|_{L^2(L_t,\sigma_t+s\psi)}$.  To simplify 
notation we denote the norm by $\|\cdot\|_t$.

We have 
\begin{align*}
\|u_n - \pi(\tilde u_n - u')\|_t& = 
\|\chi_n\cdot u_n + (1-\chi_n)\cdot u_n - \pi(\tilde u_n - u')\|_t\\
& \leq \|\tilde u_n - \pi(\tilde u_n)\|_t + \|(1-\chi_n)\cdot u_n\|_t + \|u'\|_t\\
& \leq\|\tilde u_n - \pi(\tilde u_n)\|_t + (1+\sqrt{c_1})\|u_n\|_{L^2(L_t|_{A_n},\sigma_t+s\psi)}
\end{align*}
Note that 
$$
\|\tilde u_n - \pi(\tilde u_n)\|\leq\underset{\|f\|_{L^2_t}\leq1}{\underset{f\in\mathcal OL^2_t}{\sup}}\{|\langle \tilde u_n,f\rangle|\}.
$$
Let $f\in\mathcal OL^2(L,\sigma_t+s\psi)$ with $\|f\|_{L^2(L,\sigma_t+s\psi)}=1$.
We have that $\langle (\chi_n+(1-\chi_n))u_n,f\rangle=0$, and so 
$|\langle\tilde u_n,f\rangle|=|\langle (1-\chi_n)u_n,f\rangle|$.  By the Cauchy-Schwartz inequality 
we get that
 $$
 |\langle\tilde u_n,f\rangle|\leq\|u_n\|_{L^2(L_t|_{A_n},\sigma_t+s\psi)}.
$$
\end{proof}

\begin{lemma}\label{closemetrics}
Let $U\subset\mathbb C$ be a domain, let $L\rightarrow U$ be a line bundle, and let $c>0$.
Then for any $\epsilon>0$ there exists a $\delta>0$ such that the following holds: Let $\sigma_1$
and $\sigma_2$ be metrics on $L$ with $dd^c\sigma_j\geq c\cdot dV$, 
and assume that $\|\sigma_1-\sigma_2\|_{U,1}\leq\delta$.  Let 
$v_i\in L^2_{0,1}(L,\sigma_i)$, for $i=1,2$, and let 
$u_i$ denote the $L^2(L,\sigma_i)$-minimal solution to the equation $\overline\partial u_i=v_i$ for 
$i=1,2$.  Then 
$$
\|u_1-u_2\|_{L^2(L,\sigma_1)}\leq c^{-1}\|v_1-v_2\|_{L^2(L,\sigma_1)}+\epsilon\|v_2\|_{L^2(L,\sigma_2)}.
$$
\end{lemma}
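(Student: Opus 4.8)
The plan is to exploit two structural facts: for a \emph{fixed} metric the minimal-solution operator $v\mapsto u$ is linear, and the difference of any two solutions of one and the same $\overline\partial$-equation is holomorphic, hence can be played off against the two (different) minimality conditions. Write $\rho:=\sigma_1-\sigma_2$; since the transition functions of $L$ cancel, $\rho$ is a genuine function on $U$ with $\sup_U|\rho|\le\delta$ (the $s+t=0$ term of $\|\cdot\|_{U,1}$, which is all of the $\mathcal C^1$-bound we shall actually use). Note that $\|\sigma_1-\sigma_2\|_{U,1}\le\delta$ makes $L^2(L,\sigma_1)$ and $L^2(L,\sigma_2)$ the same space with equivalent norms, and $\mathcal OL^2(L,\sigma_1)=\mathcal OL^2(L,\sigma_2)$ as sets, since holomorphicity does not see the metric.

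First I would split off the $v_1-v_2$ contribution. Let $w$ be the $L^2(L,\sigma_1)$-minimal solution of $\overline\partial w=v_1-v_2$ and $u_2'$ the $L^2(L,\sigma_1)$-minimal solution of $\overline\partial u_2'=v_2$; both exist by H\"ormander since $dd^c\sigma_1\ge c\,dV$. Because $w+u_2'$ solves $\overline\partial(\cdot)=v_1$ and is orthogonal to $\mathcal OL^2(L,\sigma_1)$, it \emph{is} the $\sigma_1$-minimal solution $u_1$. Hence $u_1-u_2=w+(u_2'-u_2)$, and H\"ormander gives $\|w\|_{L^2(L,\sigma_1)}\le c^{-1}\|v_1-v_2\|_{L^2(L,\sigma_1)}$. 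So by the triangle inequality it remains only to show $\|u_2'-u_2\|_{L^2(L,\sigma_1)}\le\epsilon\|v_2\|_{L^2(L,\sigma_2)}$.

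Set $h:=u_2'-u_2$. Since $\overline\partial u_2'=\overline\partial u_2=v_2$, we have $h\in\mathcal OL^2$, so $h$ may be tested against both minimality conditions: $u_2'\perp_{\sigma_1}\mathcal OL^2$ gives $\langle u_2',h\rangle_{\sigma_1}=0$, and $u_2\perp_{\sigma_2}\mathcal OL^2$ gives $\langle u_2,h\rangle_{\sigma_2}=0$. Therefore
\[
\|h\|_{L^2(L,\sigma_1)}^2=\langle u_2'-u_2,h\rangle_{\sigma_1}=-\langle u_2,h\rangle_{\sigma_1}=\langle u_2,h\rangle_{\sigma_2}-\langle u_2,h\rangle_{\sigma_1}=\int_U u_2\,\overline h\,(e^{-\sigma_2}-e^{-\sigma_1})\,dV.
\]
Since $e^{-\sigma_2}-e^{-\sigma_1}=e^{-\sigma_1}(e^{\rho}-1)$ with $|e^{\rho}-1|\le e^{\delta}-1$, Cauchy--Schwarz with weight $e^{-\sigma_1}$ gives $\|h\|_{L^2(L,\sigma_1)}^2\le(e^{\delta}-1)\,\|u_2\|_{L^2(L,\sigma_1)}\,\|h\|_{L^2(L,\sigma_1)}$, hence $\|h\|_{L^2(L,\sigma_1)}\le(e^{\delta}-1)\|u_2\|_{L^2(L,\sigma_1)}$. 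Finally $\|u_2\|_{L^2(L,\sigma_1)}\le e^{\delta/2}\|u_2\|_{L^2(L,\sigma_2)}\le e^{\delta/2}c^{-1}\|v_2\|_{L^2(L,\sigma_2)}$, using $e^{-\sigma_1}\le e^{\delta}e^{-\sigma_2}$ and H\"ormander for $\sigma_2$ (whose curvature bound $dd^c\sigma_2\ge c\,dV$ is the remaining hypothesis). Choosing $\delta$ small enough that $(e^{\delta}-1)e^{\delta/2}c^{-1}\le\epsilon$ finishes the argument.

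The computations here are routine; the only real idea is the observation that $h=u_2'-u_2$ is holomorphic and that pairing it against the mismatched pair of minimality relations converts the $\mathcal C^0$-closeness of the two metrics directly into the small constant $\epsilon$. The main thing to be careful about is bookkeeping: keeping track of which metric each inner product and each $L^2$-norm refers to, and invoking H\"ormander's estimate with the curvature bound for the correct metric at each step.
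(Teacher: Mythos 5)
Your argument is correct, and its opening move is the same as the paper's: writing $u_1-u_2=w+(u_2'-u_2)$ with $w$ the $\sigma_1$-minimal solution for $v_1-v_2$ is exactly the paper's observation that $\pi_1(u_1-u_2)=u_1-\pi_1(u_2)$ obeys the H\"ormander bound. Where you genuinely diverge is in estimating the remaining holomorphic discrepancy. The paper argues by duality: it tests $u_2-\pi_1(u_2)$ against an arbitrary $g\in\mathcal OL^2(L,\sigma_1)$ with $\|g\|_1\le 1$, rewrites $\langle u_2,g\rangle_1$ as the $\sigma_2$-pairing of $u_2$ with $g\,e^{\sigma_2-\sigma_1}$, and then must repair the non-holomorphy of $g\,e^{\sigma_2-\sigma_1}$ by solving an auxiliary equation $\overline\partial\tilde u=g\,\overline\partial(e^{\sigma_2-\sigma_1})$ with the $\sigma_2$-estimate; this is precisely where the derivative part of the hypothesis $\|\sigma_1-\sigma_2\|_{U,1}\le\delta$ is consumed. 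You instead pair the single holomorphic section $h=u_2'-u_2$ against the two mismatched orthogonality relations, which collapses everything to the identity $\|h\|_1^2=\int_U u_2\overline h\,(e^{-\sigma_2}-e^{-\sigma_1})\,dV$ and one application of Cauchy--Schwarz. This buys three things: no auxiliary $\overline\partial$-problem, an explicit $\delta(\epsilon)$, and the fact that only $\sup_U|\sigma_1-\sigma_2|\le\delta$ is needed, so you prove a slightly stronger statement than the one asserted (the $\mathcal C^1$-closeness is not actually required). One shared cosmetic point: both you and the paper quote H\"ormander in the form $\|u\|\le c^{-1}\|v\|$, whereas the displayed estimate in Section 4 gives $c^{-1/2}$ at the level of norms; this affects only the inessential constant in front of $\|v_1-v_2\|_{L^2(L,\sigma_1)}$.
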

\begin{proof}
To shorten notation let $\|\cdot\|_j$ denote the $L^2$-norm with respect to the weight $\sigma_j$
for $j=1,2$.
Let $\pi_1$ denote the orthogonal projection $\pi_1:L^2(L,\sigma_1)\rightarrow\mathcal OL^2(L,\sigma_1)^\perp$.  Then $\pi_1(u_1-u_2)=u_1-\pi_1(u_2)$ satisfies 
$$
\|u_1-\pi_1(u_2)\|_{1}\leq c^{-1}\|v_1-v_2\|_{1}.
$$
Hence, we need to show that if $\delta$ is small enough, then $\|u_2-\pi_1(u_2)\|_{1}\leq\epsilon\|v_2\|_{2}$.  For this it is enough 
to show that if $g\in\mathcal OL^2(L,\sigma_1)$, $\|g\|_{1}\leq 1$, then $|\langle u_2, g\rangle_1|\leq\epsilon\|v_2\|_{2}$.
We have that 
\begin{align*}
|\langle u_2, g\rangle|_1 & = |\underset{U}{\int\int} u_2\cdot\overline ge^{-\sigma_1}dV|
= |\underset{U}{\int\int} u_2\cdot\overline {g\cdot e^{\sigma_2-\sigma_1}}e^{-\sigma_2}dV|.
\end{align*}
Let $\tilde v$ denote the form $\tilde v:=g\overline\partial(e^{\sigma_2-\sigma_1})$.  
Clearly, for any $\epsilon_1>0$, we may choose $\delta>0$ small enough such that 
$\|\tilde v\|_{2}\leq\epsilon_1$.  Let $\tilde u$ be the 
$L^2(L,\sigma_2)$-minimal solution to $\overline\partial\tilde u=\tilde v$.   We have that 
$$
\|\tilde u\|_{2}\leq c^{-1}\cdot\epsilon_1.
$$
We have that 
$$
\underset{U}{\int\int} u_2\cdot\overline {g\cdot e^{\sigma_2-\sigma_1} - \tilde u}\cdot e^{-\sigma_2}dV = 0,
$$
and so it is enough to estimate $|\langle u_2,\tilde u\rangle_2|$, and by Cauchy-Schwarz
we have that 
$$
|\langle u_2,\tilde u\rangle|_2\leq\|u_2\|_2\cdot c^{-1}\cdot\epsilon_1\leq c^{-2}\cdot\epsilon_1\cdot\|v_2\|_2.
$$
\end{proof}

\medskip

\emph{Proof of Theorem \ref{continuous}:}  

We may assume that $L_\mathbb T$ is the trivial bundle over $\mathbb D\times\mathbb T$
(solve Cousin II using the Cauchy integral formula for solving $\overline\partial$.) \

As stated, for each $t$ let $u_t$ denote the $L^2(\sigma_t+s\psi)$-minimal solution to the equation $\overline\partial u_t=v_t$.  Let $\epsilon>0$.  For each $n\in\mathbb N$ and $t\in\mathbb T$ let $u_{t,n}$ be 
the $L^2(\sigma_t + s\psi|_{\mathbb D(n)})$-minimal solution to the equation $\overline\partial u_{t,n}=v_{t}|_{\mathbb D(n)}$.  To simplify notation we 
denote the norms by $\|\cdot\|_{t}$ and $\|\cdot\|_{t,n}$.    
Fix $t_0\in\mathbb T$.
Then $u_{t_0,n}$ converges to $u_{t_0}$ in $L^2(\sigma_{t_0}+s\psi)$
and there exists an $N\in\mathbb N$ such that 
\begin{itemize}
\item[i)] $\|u_{t_0}-u_{t_0,n}\|_{t_0,n}\leq\epsilon$, and 
\item[ii)] $\|u_{t_0,n}\|_{L^2(\sigma_{t_0}+s\psi|_{A_n})}\leq\epsilon$,
\end{itemize}
for all $n\geq N$.   Fix $n_0\geq N$.  
By Lemma \ref{closemetrics} there 
exists an open neighborhood $V$ of $t_0$ such that 

\begin{itemize} 
\item[iii)] $\|u_{t,n_0}-u_{t_0,n_0}\|_{t,n_0}\leq\epsilon$,
\end{itemize}
for all $t\in V$.  By possibly having to choose a smaller $V$ we may assume that 
the any weight $\sigma_t$ is comparable to $\sigma_{t_0}$ on $\mathbb D(n_0)$, 
\emph{i.e.}, we have that 
$$
\frac{1}{2}e^{-\sigma_t}\leq 
e^{-\sigma_{t_0}}
\leq 2e^{-\sigma_t}
$$ 
for all $t\in V$.   We get that 
\begin{itemize} 
\item[iv)] $\|u_{t,n_0}\|_{L^2(\sigma_{t}+s\psi|_{A_{n_0}})}\leq 3\cdot\epsilon$,
\end{itemize}
for all $t\in V$.  By Corollary \ref{globalvsn} we have that 
\begin{itemize} 
\item[v)] $\|u_t - u_{t,n_0}\|_{t,n_0}\leq c_2\cdot 3\cdot\epsilon$,
\end{itemize}
for all $t\in V$.  
We get
\begin{align*}
\|u_t-u_{t_0}\|_{t_0,n_0} & \leq 2\cdot \|u_t-u_{t,n_0}\|_{t,n_0}\\
&+ 2\cdot\|u_{t,n_0}-u_{t_0,n_0}\|_{t,n_0}\\
& +\|u_{t_0,n_0}-u_{t_0}\|_{t_0,n_0}\\
& \leq (6\cdot c_2 + 3)\cdot\epsilon.
\end{align*}

\section{$\overline\partial_b$ on suspensions}

Our goal in this section is to prove Theorem \ref{main} in the special case that 
the lamination is a so-called suspension (see below for the construction). 
For applications we will show that we also get transversal smoothness, and we will allow singular metrics. 

\begin{theorem}\label{suspension}
Let $X$ be a compact Riemann surface of genus  greater than or equal to two, 
and assume that we are given a $\mathcal C^1$-smooth suspension $g:Y\rightarrow X$. 
Then there exists a constant $s>0$ such that the following holds:

Assume that we are given a line bundle $L_*\rightarrow X$ with a (possibly singular)
metric $\sigma_*$.
We let $L=g^*L_*$, 
$\sigma=g^*\sigma_*$, and we let $\tilde L$ denote the bundle $\pi^*L$
and $\tilde\sigma$ denote the metric $\pi^*\sigma$.   Let $\psi(\zeta):=\log(1-|\zeta|^2)$, 
and assume that $dd^c (\tilde\sigma + s\psi)$ is positive.   Then for 
any $\mathcal C^1$-smooth $(0,1)$-form $v$ on $Y$ with coefficients in $L$ and 
$v\in L^2_{loc}(\sigma)$,  there exists a $\mathcal C^1$-smooth section 
$u\in\Gamma(L)$ with $u\in L^2_{loc}(\sigma)$ and $\overline\partial_b u=v$.  \
To obtain transversally continuous solutions it is enough to assume $v$ is continuous and that $s=5$. 
\end{theorem}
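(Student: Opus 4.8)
We want to solve $\overline\partial_b u = v$ on a suspension $g:Y\to X$, with $X$ a compact hyperbolic Riemann surface, by lifting to the universal cover. A suspension over $X$ is built from a representation $\rho:\pi_1(X)\to \mathrm{Homeo}(T)$ of the fundamental group into the homeomorphism group of a compact transverse space $T$; concretely $Y = (\mathbb D\times T)/\pi_1(X)$, where $\pi_1(X)$ acts on $\mathbb D$ by deck transformations of the universal cover $\mathbb D\to X$ and on $T$ via $\rho$. So the leafwise universal cover of every leaf is a copy of $\mathbb D$, and all leaves are uniformized by the \emph{same} Fuchsian group $\Gamma=\pi_1(X)$ acting on $\mathbb D$ in the \emph{same} way; only the transverse action varies. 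This is the simplification the authors allude to: the deck transformations do not move with $t$, so we do not need Section~4 here. Concretely, I would fix the covering projection $\pi:\mathbb D\times T\to Y$, set $\tilde L=\pi^*L$, $\tilde\sigma=\pi^*\sigma$, and pull the form $v$ back to a $\Gamma$-invariant $(0,1)$-form $\tilde v$ on $\mathbb D\times T$, which for each fixed $t\in T$ is a $(0,1)$-form $\tilde v_t$ on $\mathbb D$ with coefficients in the trivial bundle, and $\|\tilde v_t\|$ depends $\mathcal C^1$-continuously (resp.\ continuously) on $t$ on compact subsets of $\mathbb D$ by the $\mathcal C^1$-smoothness of the suspension.

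\textbf{Producing a canonical leafwise solution.} For each $t$, I would take $u_t$ to be the $L^2(\tilde L_t,\tilde\sigma_t+s\psi)$-minimal solution to $\overline\partial u_t=\tilde v_t$ on $\mathbb D$, which exists and satisfies a Hörmander estimate because $dd^c(\tilde\sigma_t+s\psi)\geq (c-4s)\,dV>0$ (Remark \ref{invariantconstant}). The key point is that this minimal solution is \emph{$\Gamma$-equivariant}: for $\gamma\in\Gamma$ the pullback $\gamma^*u_t$ solves $\overline\partial(\gamma^*u_t)=\gamma^*\tilde v_t$, and since $\gamma$ acts on the data in the way compatible with the suspension structure — $\gamma^*\tilde v_t = \tilde v_{\gamma\cdot t}$ up to the bundle identification, and $\gamma^*\psi=\psi$ since $\gamma\in\Aut_{hol}\mathbb D$ — uniqueness of the minimal solution forces the family $\{u_t\}$ to transform correctly under $\Gamma$, hence to descend to a genuine section $u$ of $L$ over $Y$ with $\overline\partial_b u = v$. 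Here $\psi$ being $\Aut_{hol}\mathbb D$-invariant is exactly what makes the weight $\tilde\sigma_t+s\psi$ compatible with the group action, so $s\psi$ can be added freely without destroying equivariance.

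\textbf{Regularity in $t$ and the main obstacle.} The transverse continuity of $u$ in the case $v$ merely continuous, $s=5$, is exactly Theorem \ref{continuous} applied fiberwise: one localizes $v$ on a compactly contained domain $V\Subset\mathbb D$ (after a partition-of-unity cut in the transverse direction, using that $T$ is metrizable and the suspension is locally a product), notes that the family $\{\tilde L_t,\tilde\sigma_t\}$ satisfies hypotheses 1--3 of Section~4 — hypothesis 1 with $c>4s$ coming from $dd^c(\tilde\sigma+s\psi)>0$ and $dd^c\psi=-4(1-|\zeta|^2)^{-2}dV$, hypotheses 2--3 from the $\mathcal C^1$-smoothness of the suspension — and concludes that $u_t$ is continuous in $(\zeta,t)$, hence descends to a continuous section. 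For the $\mathcal C^1$-smooth statement one must additionally differentiate the minimal-solution family in the transverse variable; the expected main obstacle is precisely this transverse $\mathcal C^1$ regularity: one differentiates the equation $\overline\partial u_t = \tilde v_t$ and the varying weight $e^{-(\tilde\sigma_t+s\psi)}$ formally in $t$, obtains an equation for the transverse derivative $\partial_t u_t$ with a source term built from $\partial_t\tilde v_t$ and $(\partial_t\tilde\sigma_t)u_t$, and must solve it with $L^2$ estimates — the need for a \emph{larger} $s$ (not just $s=5$) comes from absorbing the extra factors of $(1-|\zeta|^2)^{-1}$ produced when transverse derivatives hit the metric near $\partial\mathbb D$, which is controlled using the weight estimates of Section~2 (Theorem \ref{kmetric}) and the annulus decomposition of Section~\ref{preliminaries}. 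I would handle the continuous case fully via Theorem \ref{continuous} and then bootstrap to $\mathcal C^1$ by iterating this differentiated estimate, checking at each stage that the minimal solution depends $\mathcal C^1$ on $t$ by a Lemma \ref{closemetrics}-type perturbation argument applied to the differentiated equation.
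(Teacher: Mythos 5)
Your plan rests on taking, for each fixed $t$, \emph{the} $L^2(\tilde\sigma_t+s\psi)$-minimal solution of $\overline\partial u_t=\tilde v_t$ on all of $\mathbb D$ and letting uniqueness enforce $\Gamma$-equivariance. This object does not exist, and that is precisely the difficulty the paper's proof is built to circumvent. The lifted form $\tilde v_t$ is $\Gamma$-quasi-periodic (it satisfies $\tilde v_t=\varphi^*\tilde v_{\phi(\varphi)(t)}$ over every translate $\varphi(U_{id})$ of a fundamental piece), so it does not decay toward $\partial\mathbb D$, while the weight $e^{-s\psi}=(1-|\zeta|^2)^{-s}$ blows up there; consequently $\tilde v_t\notin L^2(\tilde\sigma_t+s\psi)$ and there is no global minimal solution in that space. (Even the solution ultimately constructed only lives in $L^2_{loc}(\tilde\sigma)$.) The paper instead decomposes $\tilde v=\sum_\varphi\tilde v_\varphi$ over the group, pulls each piece back to the origin by $\varphi$ (where its $L^2(\tilde\sigma+s\psi)$-norm is bounded \emph{uniformly} in $\varphi$ and $t$), takes the minimal solution of each local problem, pushes forward by $\varphi$, and sums. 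The heart of the proof, which your proposal never engages with, is the convergence of this infinite series: combining the counting Lemma \ref{distconst} ($O(2^n)$ group elements with $\varphi^{-1}(0)\in A_n$) with the distortion Lemmas \ref{derivativeupper} and \ref{derivativer} yields a term bound $O((\tfrac12)^{n(s-4)/2})$, and summing over $n$ is exactly where $s\geq 5$ enters. Your attribution of $s=5$ to the curvature condition $c>4s$ of Section 4 is therefore misplaced, and indeed the paper does not use Theorem \ref{continuous} for suspensions at all.

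A second, smaller error: $\gamma^*\psi\neq\psi$ for $\gamma\in\Aut_{hol}\mathbb D$; by Schwarz--Pick, $\psi\circ\gamma=\psi+\log|\gamma'|$, and only $dd^c\psi$ is invariant (Remark \ref{invariantconstant}). So even formally, pullback by $\gamma$ is not an isometry of $L^2(\tilde\sigma+s\psi)$ and equivariance of a minimal solution would not follow by the uniqueness argument you give; in the paper, equivariance is instead checked directly on the local pieces via $\varphi^*\tilde v_{\varphi,t}=(\tau\circ\varphi)^*\tilde v_{\tau\circ\varphi,\phi(\tau)(t)}$. Finally, for the $\mathcal C^1$ statement your idea of differentiating the equation in $t$ matches Lemma \ref{termbyterm}, but you are missing the quantitative Lipschitz estimate (Lemma \ref{lipschitz}): the holonomy $\phi(\varphi)^{-1}$ distorts transverse distances by a factor up to $2^{kn}$ (Lemma \ref{lenghtlift}), so the differentiated series only converges when $s>2(k+2)$ roughly, which is why smoothness requires a larger $s$ than continuity. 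Without the group decomposition and these tail estimates, the proof does not go through.
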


\begin{remark}
A section/form being in $L^2_{loc}$ means that it is locally integrable in the leaf-direction
for each leaf. 
\end{remark}

\subsection{The construction of suspensions}

Let $X$ be a compact Riemann surface of genus greater than or equal to two (resp. one), let $f:\mathbb D\rightarrow X$ (resp. $f:\mathbb C\rightarrow X$)
be a universal covering map, and let $\Gamma$ be the corresponding 
Deck-group.  Let $\mathbb T$ be a compact smooth 
manifold, and assume that we are given a homomorphism
$\phi:\Gamma\rightarrow Diff(\mathbb T)$.  We let $\tilde\Gamma$
denote the group of diffeomorphisms of $\mathbb D\times\mathbb T$ (resp. $\mathbb C\times\mathbb T$)
consisting of elements $\tilde\varphi:=(\varphi,\phi(\varphi))$ for $\varphi\in\Gamma$, 
we consider the quotient $Y:=(\mathbb D\times\mathbb T)/\tilde\Gamma$
(resp. $Y:=(\mathbb C\times\mathbb T)/\tilde\Gamma$), and 
denote the quotient map by $\pi:\mathbb D\times\mathbb T\rightarrow Y$
(resp. $\pi:\mathbb C\times\mathbb T\rightarrow Y$).     \

For genus $g_X\geq 2$, coordinate charts on $Y$ are given as follows: for a point $(\zeta,t)\in\mathbb D\times\mathbb T$
let $U\subset\mathbb D$ be a domain such that $\varphi(U)\cap U\neq\emptyset,\varphi\in\Gamma\Rightarrow\varphi=id$.  Let $\tilde U:=\{[(\zeta,t)]:\zeta\in U,t\in\mathbb T\}$ and let 
$\Phi_{\tilde U}:\tilde U\rightarrow U\times\mathbb T$ be defined by $[(\zeta,t)]\mapsto (\zeta,t)$. 
Let $\tilde V$ be another chart with $\tilde U\cap\tilde V\neq\emptyset$.  Then 
there is a point $(\zeta_1,t_1)\in U\times\mathbb T$ and a point $(\zeta_2,t_2)\in V\times\mathbb T$ such that $[(\zeta_1,t_1)]=[(\zeta_2,t_2)]$, \emph{i.e.}, there is an element $\varphi\in\Gamma$
such that $\zeta_2=\varphi(\zeta_1)$ and $t_2=\phi(\varphi)(t_1)$.  So the transition 
$\Phi_{\tilde V,\tilde U}$ between $\Phi_{\tilde U}(\tilde U\cap\tilde V)$ and $\Phi_{\tilde V}(\tilde U\times\tilde V)$ is given by $(\zeta,t)\mapsto(\varphi(\zeta),\phi(\varphi)(t))$.  This gives $Y$ the 
structure of a Riemann surface lamination, and the leaves are the images $\pi(\mathbb D\times\{t\}), t\in\mathbb T$.  There is a natural projection $g:Y\rightarrow X$, given 
by $[(\zeta,t)]\mapsto [\zeta]$, and each fiber $Y_x:=g^{-1}(x)$ is diffeomorphic to $\mathbb T$.  
The lamination $Y$ is called a \emph{suspension} over $X$.   \

Now we want to define a transversal metric on $Y$ and describe a relationship 
with the Poincar\'{e} metric $d_P$ on $X$.   Let $\{U_j\}_{j=1}^m$ be a cover of $X$ 
by smoothly bounded disks.   We have charts 
$$
\Phi_j:g^{-1}(U_j)\rightarrow U_j\times\mathbb T, 
$$
respecting the projection to $U_j$.   Let $d_\mathbb T$ be any smooth 
Riemannian distance on $\mathbb T$, and for each $j$ let $d_j$ denote the transversal 
metric $d_j:=\Phi_j^*d$.   Note that any two distances $d_i$ and $d_j$ 
are comparable on a common domain of definition.  \

Let $\{\psi_j\}_{j=1}^m$ be a partition of unity with 
respect to the given cover of $X$, and define a
global transversal distance $d_x(t_1,t_2)$ by 
$$
d_x(t_1,t_2):=\sum_j \psi_j(x)d_j(t_1,t_2).
$$
For each $j$ and for each $i$ let $d_{ij}=(\Phi_j)_*d_i$.  
Then on $U_j\times\mathbb T$ we have that $(\Phi_j)_*d$
is given by 
$$
[(\Phi_j)_*d]_x(t_1,t_2)=\sum_i \psi_i(x)\cdot d_{ij}(t_1,t_2).
$$
For each $j$, let $C_j>0$ be a constant such that the following holds: if $x$ and $y$ 
are points in $U_j$, $t_1,t_2\in\mathbb T$, and $\gamma$ is a smooth curve connecting $x$ and $y$, then 

$$
[(\Phi_j)_*d]_y(t_1,t_2)\leq C_j^{l_P(\gamma)}\cdot [(\Phi_j)_*d]_x(t_1,t_2), 
$$
where $l_P$ denote the Poincar\'{e} length.  Choosing a constant $C$
which is greater than $C_j$ for all $j$ we obtain:  

\begin{lemma}
Given a $\mathcal C^1$-smooth suspension $g:Y\rightarrow X$, and a transversal metric $d_x$ as described above, there exists a constant $C>0$ such that the following holds: 

Let $x\in X$, 
let $t^x_1,t^x_2\in\mathbb T_x$, and let $\gamma: [0,1]\rightarrow X$ 
be a smooth immersion with $\gamma(0)=x$.   Let $y=\gamma(1)$,
and for $j=1,2$ let $t^y_j\in\mathbb T_y$ be the point obtained by lifting 
$\gamma$ to the leaf $\mathcal L_{t_j^x}$ with initial point $t_j^x$, \emph{i.e.},
$t^y_j$ is its end point.   Then 
$$
d_y(t_1^y,t_2^y)\leq C^{l_P(\gamma)}\cdot d_x(t_1^x,t_2^x).  
$$
\end{lemma}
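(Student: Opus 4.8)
The plan is to reduce the global inequality along $\gamma$ to a product of the local inequalities that define the constants $C_j$, using a subdivision of the curve $\gamma$ adapted to the cover $\{U_j\}$. First I would invoke the Lebesgue number lemma for the cover $\{g^{-1}(U_j)\}$ of the compact set $g(\gamma([0,1])) \subset X$: there is a partition $0 = s_0 < s_1 < \cdots < s_N = 1$ such that each subarc $\gamma([s_{k-1},s_k])$ lies entirely in a single $U_{j(k)}$. Writing $x_k := \gamma(s_k)$, and letting $t_1^{x_k}, t_2^{x_k} \in \mathbb{T}_{x_k}$ denote the points obtained by lifting $\gamma|_{[0,s_k]}$ along the two leaves starting from $t_1^x, t_2^x$, the key observation is that the defining inequality for $C_{j(k)}$ applies on the arc $\gamma|_{[s_{k-1},s_k]}$: since this arc connects $x_{k-1}$ and $x_k$ inside $U_{j(k)}$, and since the transversal lift is exactly the holonomy used in the definition of $C_{j(k)}$ (the chart $\Phi_{j(k)}$ trivializes the lamination over $U_{j(k)}$, so lifting along $\gamma$ agrees with the constant-$t$ lift read in that chart), we get
\[
[(\Phi_{j(k)})_* d]_{x_k}(t_1^{x_k}, t_2^{x_k}) \le C_{j(k)}^{\, l_P(\gamma|_{[s_{k-1},s_k]})} \cdot [(\Phi_{j(k)})_* d]_{x_{k-1}}(t_1^{x_{k-1}}, t_2^{x_{k-1}}).
\]

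Next I would convert between the chart-pushforward distances $[(\Phi_j)_* d]$ and the global transversal distance $d_x$. By construction $d_x = \sum_i \psi_i(x) d_i$ on the relevant domains, and any two of the $d_i$, hence $d_x$ and each $[(\Phi_j)_* d]_x = \sum_i \psi_i(x) d_{ij}$, are mutually comparable (uniformly, by compactness) on a common domain of definition. Strictly speaking, to avoid accumulating comparability constants at each of the $N$ steps — $N$ being unbounded as $\gamma$ gets long — I would instead follow the paper's own hint and absorb all such comparisons into the choice of $C$: for each $j$, one first enlarges $C_j$ so that the displayed inequality holds with $[(\Phi_j)_* d]$ replaced by $d$ itself on both sides (this is possible because passing from $[(\Phi_j)_* d]_x$ at $x$ to $d_x$ at $x$, and likewise at $y$, costs only fixed multiplicative constants depending on $j$, which can be folded into the exponential base at the price of enlarging $C_j$, using that $l_P(\gamma) \ge$ some positive lower bound is \emph{not} available, so one instead notes $C_j^{l_P(\gamma)} \ge 1$ and multiplies the base by a fixed factor — more carefully, replace $C_j$ by $C_j' := C_j \cdot M_j$ where $M_j$ bounds the comparability constants, which works on each single subarc since there the inequality is used once). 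With $C := \max_j C_j'$ one then has, for each $k$,
\[
d_{x_k}(t_1^{x_k}, t_2^{x_k}) \le C^{\, l_P(\gamma|_{[s_{k-1},s_k]})} \cdot d_{x_{k-1}}(t_1^{x_{k-1}}, t_2^{x_{k-1}}).
\]

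Finally, telescoping over $k = 1, \dots, N$ and using additivity of Poincaré length, $\sum_k l_P(\gamma|_{[s_{k-1},s_k]}) = l_P(\gamma)$, yields
\[
d_y(t_1^y, t_2^y) = d_{x_N}(t_1^{x_N}, t_2^{x_N}) \le C^{\, l_P(\gamma)} \cdot d_{x_0}(t_1^{x_0}, t_2^{x_0}) = C^{\, l_P(\gamma)} \cdot d_x(t_1^x, t_2^x),
\]
which is the claim. The main obstacle is the bookkeeping in the middle step: one must set up the constants $C_j$ so that the comparability between the chart distances and the global distance $d_x$ does not produce a factor that compounds with the number of subdivisions $N$ (which is not controlled by $l_P(\gamma)$ alone). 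The clean way around this is to bake those fixed comparison constants into the exponential base once and for all, as above, so that each application on a single subarc is lossless and the telescoping goes through with a single uniform $C$.
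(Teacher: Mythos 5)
Your overall strategy --- subdivide $\gamma$ so that each subarc lies in a single chart $U_{j(k)}$, apply the defining inequality for $C_{j(k)}$ on that subarc, and telescope using additivity of $l_P$ --- is exactly what the paper intends (the paper offers no more than ``choosing a constant $C$ which is greater than $C_j$ for all $j$ we obtain''), and the first and last steps of your argument are fine.

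The middle step, however, is both confused and, as written, incorrect. You treat $[(\Phi_j)_*d]_x$ as a metric merely \emph{comparable} to $d_x$ and then try to absorb the comparison constants $M_j$ into the exponential base, replacing $C_j$ by $C_j':=C_j\cdot M_j$. That absorption cannot work: on a subarc of Poincar\'e length $\ell$ the conversion would cost a fixed multiplicative factor (something like $M_j^2$), while enlarging the base only gains a factor $M_j^{\ell}$, which tends to $1$ as $\ell\to 0$; since the number of subdivision points $N$ is not controlled by $l_P(\gamma)$, the losses would still compound, as you yourself worry. Fortunately no conversion is needed at all: by the paper's construction, $[(\Phi_j)_*d]_x=\sum_i\psi_i(x)\,d_{ij}$ with $d_{ij}=(\Phi_j)_*d_i$ is precisely the pushforward of the global distance $d_x=\sum_i\psi_i(x)\,d_i$ under the chart $\Phi_j$, i.e.\ $[(\Phi_j)_*d]_x(t_1,t_2)=d_x\bigl(\Phi_j^{-1}(x,t_1),\Phi_j^{-1}(x,t_2)\bigr)$; it is the same metric read in chart coordinates, not one of the auxiliary metrics $d_j$. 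Since $\Phi_j$ trivializes the suspension over $U_j$ (so the holonomy lift is the constant-$t$ lift in these coordinates, as you correctly note), the local inequality defining $C_j$ is already the statement of the lemma for curves contained in $U_j$, with the same global metric $d$ on both sides. The telescoping is therefore lossless with $C:=\max_j C_j$, and your proof closes once the middle paragraph is replaced by this observation.
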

\begin{lemma}\label{lenghtlift}
Given a $\mathcal C^1$-smooth suspension $g:Y\rightarrow X$ there exists a constant $k\in\mathbb N$
such that the following holds: 

If $\varphi\in\Gamma$ satisfies $\varphi(0)\in A_n$ then 
$$
d_0(\phi(\varphi)^{-1}(t^0_1),\phi(\varphi)^{-1}(t^0_2))\leq 2^{kn}\cdot d_0(t^0_1,t^0_2).
$$ 
for all points $t_1^0,t_2^0\in\mathbb T$.
(Here $d_0$ is the transversal metric constructed above lifted to $\mathbb D\times\mathbb T$ and restricted to $\{0\}\times\mathbb T=:\mathbb T_0$.)
\end{lemma}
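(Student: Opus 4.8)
The plan is to recognise $\phi(\varphi)^{-1}$ as the holonomy of the suspension along a short loop in $X$, and then to quote the preceding lemma together with estimate iii) for the annuli $A_n$.

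First I would set $x_0:=f(0)\in X$ and use $\pi$ to identify the fibre $\mathbb{T}_{x_0}=g^{-1}(x_0)$ with $\mathbb{T}_0=\{0\}\times\mathbb{T}$, so that under this identification the transversal distance $d_{x_0}$ becomes $d_0$. If $\varphi=\mathrm{id}$ the inequality is trivial, so assume $\varphi(0)\neq 0$ and let $\delta\colon[0,1]\to\mathbb{D}$ be the Poincar\'e geodesic from $0$ to $\varphi(0)$, parametrised proportionally to arclength. Since $f(\varphi(0))=f(0)=x_0$, the curve $\gamma:=f\circ\delta$ is a smooth immersion which is a loop based at $x_0$, and since $f$ is a local isometry for the Poincar\'e metrics we have $l_P(\gamma)=d_P(0,\varphi(0))\leq n+2$ by iii).

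Next I would verify the holonomy claim. Lifting $\gamma$ into the leaf of $Y$ through a point $[(0,t)]$ means following $s\mapsto[(\delta(s),t)]$, whose endpoint is $[(\varphi(0),t)]$; applying the deck transformation $\tilde\varphi^{-1}=(\varphi^{-1},\phi(\varphi)^{-1})$ gives $[(\varphi(0),t)]=[(0,\phi(\varphi)^{-1}(t))]$. Hence, in the notation of the preceding lemma with $x=y=x_0$ and $\gamma$ as above, the point obtained by lifting $\gamma$ with initial point $t_j^0\in\mathbb{T}_0$ is exactly $\phi(\varphi)^{-1}(t_j^0)$. The lemma therefore gives
$$
d_0(\phi(\varphi)^{-1}(t_1^0),\phi(\varphi)^{-1}(t_2^0))\leq C^{l_P(\gamma)}\,d_0(t_1^0,t_2^0)\leq C^{n+2}\,d_0(t_1^0,t_2^0).
$$

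Finally, choosing any $k\in\mathbb{N}$ with $2^{k}\geq\max(C,2)^{3}$ we get $2^{kn}\geq\max(C,2)^{3n}\geq C^{n+2}$ for all $n\in\mathbb{N}$, which is the assertion. I do not expect a real obstacle here: the only two points requiring care are the direction of the holonomy (handled by the computation with $\tilde\varphi^{-1}$) and the fact that the relevant loop has Poincar\'e length at most $n+2$ (which is exactly property iii) of $A_n$); everything else is bookkeeping, and the lemma is in essence a repackaging of the previous one.
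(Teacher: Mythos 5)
Your proof is correct and follows essentially the same route as the paper's: realize $\phi(\varphi)^{-1}$ as the holonomy of the loop $f\circ\delta$ obtained by projecting the geodesic from $0$ to $\varphi(0)$ (whose Poincar\'e length is at most $n+2$ by property iii) of $A_n$), and then apply the preceding comparison lemma. If anything you are slightly more careful than the paper, both in verifying the direction of the holonomy via the deck transformation and in choosing $k$ so that $C^{n+2}\leq 2^{kn}$, where the paper settles for the bound $2^{k(n+2)}$.
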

\begin{proof}
Choose $k$ such that $2^k\geq C$ from the previous lemma.  Write $y=\varphi(0)$.
The points $t_1^y=\phi(\varphi)^{-1}(t^0_1)$ and $t_2^y=\phi(\varphi)^{-1}(t^0_2)$
are the points that are identified with $t_1^0$ and $t_2^0$ respectively 
by the map $\phi(\varphi)$, \emph{i.e.}, $(y,t_1^0)\sim (0,t_1^y)$ and $(y,t_2^0)\sim (0,t_2^y)$.   One way to locate the points $t_j^y$ is then to project 
the points $(y,t_j^0)$ to $Y$ by $\pi$ and the lift them back to $\mathbb T_0$.  \

Let $\tilde\gamma:[0,1]\rightarrow\mathbb D$ parametrize 
the straight line segment between $0$ and $y$, and let 
$\tilde\gamma_1(t)=(\tilde\gamma(t),t_1^0)$, $\tilde\gamma_2(t)=(\tilde\gamma(t),t_2^0)$.  The Poincar\'{e} length of $\tilde\gamma$ is 
less than $2^{n+2}$.   Projecting 
these curves to $Y$ and using  
the previous lemma we see that the transversal distance between the two points 
$\pi((y,t_j^0))$ is less than $2^{k(n+2)}\cdot d_0(t_1^0,t_2^0)$.  
\end{proof}

\subsection{Proof of Theorem \ref{suspension}}
We will first prove that we obtain transversally continuous solutions 
to the equation $\overline\partial_b u=v$ under the assumption 
that $v$ is transversally continuous and $dd^c(\tilde\sigma + s\psi)$ is positive for $s\geq 5$.  \

\subsection{Continuous solutions}
Let $\mathcal U:=\{U_{j,*}\}_{j=1}^m$ be a cover of $X$ by simply connected open sets, 
and let $\{\alpha_j\}_{j=1}^m$ be a partition of unity with respect to $\mathcal U$.  
Writing $v=\sum_j v_j:=\sum_j(\alpha_j\circ g)\cdot v$ we have reduced 
to solving $\overline\partial_b u_j=v_j$ for each $j$.   We focus on such a $v_j$
and drop the subscript $j$.  \

Let $\tilde v$ denote the form $\tilde v:=\pi^*v$ with coefficients in $\tilde L$.    
For a fixed $t\in\mathbb T$ we let $\tilde v_t$ denote the $(0,1)$-form $\tilde v_t:=v(\cdot,t)$
on the leaf $\mathbb D\times\{t\}$.  
The form satisfies 
\begin{itemize}
\item[a.] $\tilde v_t=\varphi^*\tilde v_{\phi(\varphi)(t)}$
\end{itemize}
for all $\varphi\in\Gamma$ and all $t\in\mathbb T$.  We will find transversally 
continuous solutions $\tilde u_t$ to the equations $\overline\partial\tilde u_t=\tilde v_t$
such that 

\begin{itemize}
\item[b.] $\tilde u_t=\varphi^*\tilde u_{\phi(\varphi)(t)}$
\end{itemize}
for all $\varphi\in\Gamma$ and all $t\in\mathbb T$.  We get that $u:=\pi_*\tilde u$
is well defined and solves $\overline\partial_b u=v$.  \

Let $U_{id}$ be one of the pre-images $f^*{U_*}$.   For simplicity of notation 
we assume that $0\in U_{id}$.  For any $\varphi\in\Gamma$
we let $U_{\varphi}:=\varphi(U_{id})$.    We write $\tilde v_{\varphi}:=\tilde v|_{U_\varphi\times\mathbb T}$.   Then $\tilde v=\sum_\varphi\tilde v_\varphi$.   We will solve $\overline\partial_b\tilde u_\varphi=\tilde v_\varphi$ for each $\varphi$ and then define $\tilde u:=\sum_\varphi\tilde u_\varphi$. \

For a fixed $\varphi$ and a fixed $t$ we do this as follows.  
Let $\tilde v_{\varphi,t}^*:=\varphi^*\tilde v_{\varphi,t}$, and let $\tilde u_{\varphi,t}^*$
be the $L^2(\tilde\sigma + s\psi)$-minimal solution to the equation $\overline\partial u=\tilde u_{\varphi,t}^*$ (note that $L$ is $\Gamma$-invariant).  Define $\tilde u_{\varphi,t}=\varphi_*\tilde u^*_{\varphi,t}$. We need to check that 
\begin{itemize}
\item[1.] the sum $\tilde u_t:=\sum_\varphi\tilde u_{\varphi,t}$ converges for each fixed $t$, 
\item[2.] the solutions vary continuously with $t$, and 
\item[3.] the solutions satisfy $\tilde u_t=\varphi^*\tilde u_{\phi(\varphi)(t)}$.
\end{itemize}

To show $1.$ it is enough to show that the sum converges in $L^2(\tilde\sigma)$ 
for each fixed $t$.  Let 
$c_1$ be a constant such that $\|\tilde v_{\varphi,t}^*\|_{L^2(\tilde L,\tilde\sigma + s\psi)}\leq c_1$
for all $\varphi$ and all $t$.  According to H\"{o}rmander there exists a constant 
$c_2$ such that $\|\tilde u_{\varphi,t}^*\|_{L^2(\tilde L,\tilde\sigma+s\psi)}\leq c_1\cdot c_2$
for all $\varphi$ and all $t$.  Fix $0<r<1$.   According to Lemma \ref{derivativer}
there exists a constant $c_3$ such that if $\varphi^{-1}(0)\in A_n$, then 
$e^{-\psi(\zeta)}\geq c_3\cdot 2^{n}$ for all $\zeta\in\varphi^{-1}(\mathbb D_r)$.   
According to Lemma \ref{distconst} there exists a constant $c_4$ such that 
the number of $\varphi$-s such that $\varphi^{-1}(0)\in A_n$ is no more than $c_4\cdot 2^n$.
According to Lemma \ref{derivativeupper} we have that $|\varphi'(\zeta)|\leq 2^{n+2}$
for all $\zeta\in\mathbb D$ if $\varphi(0)\in A_n.$

We get that 
\begin{align*}
\sum_{\varphi}\|\tilde u_{\varphi,t}\|_{L^2(L|_{\mathbb D_r},\tilde\sigma)} &=
\sum_\varphi
\sqrt{\underset{\mathbb D_r}{\int\int}|\tilde u_{\varphi,t}|^2e^{-\tilde\sigma}dV}\\
& =
\sum_\varphi\sqrt{\underset{\varphi^{-1}(\mathbb D_r)}{\int\int}\varphi^*[|\tilde u_{\varphi,t}|^2e^{-\tilde\sigma}dV]} \\
& = \sum_n \underset{\varphi^{-1}(0)\in A_n}{\sum}\sqrt{\underset{\varphi^{-1}(\mathbb D_r)}{\int\int}
\varphi^*[|\tilde u_{\varphi,t}|^2e^{-\tilde\sigma}dV]}\\
& \leq \sum_n \underset{\varphi^{-1}(0)\in A_n}{\sum}2^{n+2}\cdot\sqrt{\underset{\varphi^{-1}(\mathbb D_r)}{\int\int}|\tilde u^*_{\varphi,t}|^2e^{-\tilde\sigma}\cdot dV} \\
& \leq \sum_n  \underset{\varphi^{-1}(0)\in A_n}{\sum} 4c_3^{-s/2}\cdot (\frac{1}{2})^{n(s-2)/2}\sqrt{\underset{\varphi^{-1}(\mathbb D_r)}{\int\int}|\tilde u^*_{\varphi,t}|^2e^{-\tilde\sigma - s\psi}dV} \\
& \leq 4c_1c_2c_4c_3^{-s/2}\cdot\sum_n(\frac{1}{2})^{n(s-4)/2}. 
\end{align*}
This concludes the proof of $1.$   \

To show $2.$ fix $t_1\in\mathbb T$ and $\epsilon>0$.  Fix 
any integer $N\in\mathbb N$ such that $8c_1c_2c_4c_3^{-s/2}\cdot\sum_{n\geq N}(\frac{1}{2})^{n(s-4)/2}<\frac{\epsilon}{2}$.  For any $\delta>0$ we get, by the transversal continuity of $v$, that for 
all $t_2$ close enough to $t_1$ we have 
$$
\|\tilde v^*_{\varphi,t_1}-\tilde v^*_{\varphi,t_2}\|_{L^2(\tilde L,\tilde\sigma+s\psi)}\leq\delta 
\mbox { for all } \varphi \mbox{ with } \varphi^{-1}(0)\in A_n, n\leq N
$$
and so by H\"{o}rmander we get that 
$$
\|\tilde u^*_{\varphi,t_1}-\tilde u^*_{\varphi,t_2}\|_{L^2(\tilde L,\tilde\sigma+s\psi)}\leq\delta c_2
\mbox { for all } \varphi \mbox{ with } \varphi^{-1}(0)\in A_n, n\leq N.
$$
By a calculation similar to that above we see that 
$$
\sum_\varphi \|\tilde u_{\varphi,t_1}-\tilde u_{\varphi,t_2}\|_{L^2(L|_{\mathbb D_r},\tilde\sigma)}
\leq 4\delta c_2c_4c_3^{-s/2}\cdot\underset{n}{\sum}(\frac{1}{2})^{n(s-4)/2}+ \frac{\epsilon}{2}, 
$$
hence the solutions vary continuously with $t$.  \

To show $3.$ note first that $a.$ amounts to saying that 
$$
\tilde v_{\varphi,t}=\tau^*\tilde v_{(\tau\circ\varphi),\phi(\tau)(t)},
$$
for all $\varphi,\tau\in\Gamma$ and $t\in\mathbb T$, and consequently 
$$
\varphi^*\tilde v_{\varphi,t}=(\tau\circ\varphi)^*\tilde v_{(\tau\circ\varphi),\phi(\tau)(t)}.
$$
It follows that 
$$
\tilde u_{\varphi,t}=\varphi_*\tilde u^*_{\varphi,t}=\tau^*[(\tilde\tau\circ\varphi)_*\tilde u^*_{(\tau\circ\varphi),\phi(\tau)(t)}]=\tau^*\tilde u_{(\tau\circ\varphi),\phi(\tau)(t)}, 
$$
for all $\varphi,\tau\in\Gamma$ which is equivalent to $3.$ \

The proof that we obtain transversally continuous solutions with $s\geq 5$ is 
complete, and we proceed to show that the solutions are transversally 
smooth if $s$ is large enough.   

\subsection{A Lipschitz estimate}

\begin{lemma}\label{lipschitz}
Fix $0<r<1$.  Then there exists a constant $c>0$ such that  
if $\varphi\in\Gamma$ satisfies $\varphi(0)\in A_n$, and $t_1,t_2\in\mathbb T_0$, then 
$$
\|u_{\varphi,t_2}-u_{\varphi,t_1}\|_{L^2(\tilde L|_{\mathbb D_r},\tilde\sigma)}\leq c\cdot(\frac{1}{2})^{n(s-2(k+1))/2}\cdot d_0(t_1,t_2).
$$  
\end{lemma}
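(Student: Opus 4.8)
The plan is to reduce the Lipschitz estimate to a $\overline\partial$-estimate on the unit disk by the same lifting procedure used in the proof of parts $1$--$3$ above, and then to track how the transversal metric $d_0$ controls the $L^2$-norm of the difference $\tilde v^*_{\varphi,t_2}-\tilde v^*_{\varphi,t_1}$ of the lifted data. Fix $\varphi\in\Gamma$ with $\varphi(0)\in A_n$. By definition $\tilde u_{\varphi,t_j}=\varphi_*\tilde u^*_{\varphi,t_j}$ where $\tilde u^*_{\varphi,t_j}$ is the $L^2(\tilde\sigma+s\psi)$-minimal solution of $\overline\partial u=\tilde v^*_{\varphi,t_j}=\varphi^*\tilde v_{\varphi,t_j}$. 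Since the minimal solution depends linearly on the data, $\tilde u^*_{\varphi,t_2}-\tilde u^*_{\varphi,t_1}$ is the minimal solution of $\overline\partial u=\tilde v^*_{\varphi,t_2}-\tilde v^*_{\varphi,t_1}$, so by H\"ormander (with the positive curvature bound for $\tilde\sigma+s\psi$, cf. Remark \ref{invariantconstant}) we have
\[
\|\tilde u^*_{\varphi,t_2}-\tilde u^*_{\varphi,t_1}\|_{L^2(\tilde L,\tilde\sigma+s\psi)}\leq c_2\cdot\|\tilde v^*_{\varphi,t_2}-\tilde v^*_{\varphi,t_1}\|_{L^2(\tilde L,\tilde\sigma+s\psi)}.
\]
Then I would push forward exactly as in the proof of $1.$: writing the integral over $\varphi^{-1}(\mathbb D_r)$, using $|\varphi'|\leq 2^{n+2}$ (Lemma \ref{derivativeupper}) and $e^{-\psi}\geq c_3\cdot 2^n$ on $\varphi^{-1}(\mathbb D_r)$ (Lemma \ref{derivativer}), I get
\[
\|\tilde u_{\varphi,t_2}-\tilde u_{\varphi,t_1}\|_{L^2(\tilde L|_{\mathbb D_r},\tilde\sigma)}\leq c'\cdot 2^{n+2}\cdot (c_3 2^n)^{-s/2}\cdot\|\tilde v^*_{\varphi,t_2}-\tilde v^*_{\varphi,t_1}\|_{L^2(\tilde L,\tilde\sigma+s\psi)}\leq c''\cdot (\tfrac12)^{n(s-2)/2}\cdot\|\tilde v^*_{\varphi,t_2}-\tilde v^*_{\varphi,t_1}\|_{L^2(\tilde L,\tilde\sigma+s\psi)}.
\]

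The remaining point — and the main obstacle — is to bound $\|\tilde v^*_{\varphi,t_2}-\tilde v^*_{\varphi,t_1}\|_{L^2(\tilde L,\tilde\sigma+s\psi)}$ by $2^{kn}\cdot d_0(t_1,t_2)$, which will account for the extra factor $2^{kn}$ and hence the exponent $s-2(k+1)$ in the statement. The forms $\tilde v^*_{\varphi,t_j}=\varphi^*\tilde v_{\varphi,t_j}$ are supported in the fixed fundamental domain $\varphi^{-1}(U_\varphi\times\mathbb T)=U_{id}\times\mathbb T$, on which the metric $\tilde\sigma$ is comparable to a $\varphi$-independent reference metric and $\psi$ is bounded, so the $L^2(\tilde\sigma+s\psi)$-norm over this set is comparable to an unweighted $\mathcal C^0$-controlled norm uniformly in $\varphi$. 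The data $v$ is $\mathcal C^1$ along the leaves and transversally $\mathcal C^1$, so on the original lamination $v$ has a Lipschitz constant in the transverse direction measured by $d_x$; pulling back to $U_{id}\times\mathbb T$ via $\varphi$ and comparing the transversal parameters $\tilde u^*_{\varphi,t_j}$ requires comparing the points $\phi(\varphi)^{-1}(t_j)$ (or the relevant identifications) in $\mathbb T_0$, and here Lemma \ref{lenghtlift} gives precisely $d_0(\phi(\varphi)^{-1}(t_1),\phi(\varphi)^{-1}(t_2))\leq 2^{kn}\cdot d_0(t_1,t_2)$. Combining this with the (uniform in $\varphi$, $n$) Lipschitz bound for $v$ in the transverse direction yields
\[
\|\tilde v^*_{\varphi,t_2}-\tilde v^*_{\varphi,t_1}\|_{L^2(\tilde L,\tilde\sigma+s\psi)}\leq c'''\cdot 2^{kn}\cdot d_0(t_1,t_2),
\]
and substituting into the previous display gives the claimed bound with exponent $(s-2)/2-k=n(s-2(k+1))/2$ after absorbing the constant $2$.

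The delicate bookkeeping is making the comparison of $v$ at nearby transverse parameters genuinely uniform in $\varphi$ and $n$: one must express $\tilde v^*_{\varphi,t}$ as $(\text{something }\varphi\text{-independent and }\mathcal C^1)$ evaluated at the transverse coordinate, isolate the dependence on $t$ through the deck action $\phi(\varphi)$, and then feed in Lemma \ref{lenghtlift}. Once that is set up, H\"ormander plus the push-forward computation above (which is a verbatim repeat of the estimate proving $1.$) finishes the proof. I would also remark that the hypothesis $s>2(k+1)$, needed for the exponent to be positive, is where the "$s$ large enough" in Theorem \ref{suspension} comes from in the smooth case.
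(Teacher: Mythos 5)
Your proposal is correct and follows essentially the same route as the paper: a uniform transverse Lipschitz bound for $v_{id,t}$ (from $\mathcal C^1$-smoothness and compactness of $\mathbb T$), the identification $\varphi^*\tilde v_{\varphi,t_j}=\tilde v_{id,t_j'}$ combined with Lemma \ref{lenghtlift} to produce the $2^{kn}$ factor, H\"ormander applied to the minimal solution of the difference, and the same push-forward computation as in the proof of convergence to convert the exponent $(s-2)/2$ into $(s-2(k+1))/2$. The only difference is cosmetic ordering (you apply H\"ormander before estimating the data, the paper does the reverse), so nothing further is needed.
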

\begin{proof}
Note first that by the assumption that the family $v_{id,t}$ is smooth, and 
$\mathbb T$ is compact, there exists a constant $c_1>0$ such that 
\begin{itemize}
\item[1)] $\|v_{id,t_1'}-v_{id,t_2'}\|_{U_{id}}\leq c_1\cdot d_0(t_1',t_2')$,  
\end{itemize}
for all $t_1',t_2'\in\mathbb T$ (note that we are taking the sup-norm).  By possibly 
having to increase $c_1$ depending on $s$, we get the corresponding $L^2$-estimate
\begin{itemize}
\item[2)] $\|v_{id,t_1'}-v_{id,t_2'}\|_{L^2(\tilde L,\tilde\sigma + s\psi)}\leq c_1\cdot d_0(t_1',t_2')$.
\end{itemize}
Let $v_{id,t_1'}=\varphi^*v_{\varphi,{t_1}}$ and $v_{id,t_2'}=\varphi^*v_{\varphi,{t_2}}$.
By Lemma \ref{lenghtlift} we have that $d_0(t_1',t_2')\leq 2^{kn}\cdot d_0(t_1,t_2)$, 
and so by $2)$ we get that 
\begin{itemize}
\item[2')] $\|\varphi^*v_{\varphi,{t_1}}-\varphi^*v_{\varphi,{t_2}}\|_{L^2(\tilde L,\tilde\sigma + s\psi)}\leq c_1\cdot 2^{kn}\cdot d_0(t_1,t_2)$.
\end{itemize}
Since $u^*_{{\varphi,t_1}}-u^*_{\varphi,t_2}$ is the 
$L^2(\tilde\sigma+s\psi)$-minimal solution to the equation $\overline\partial u=
\varphi^*v_{\varphi,{t_1}}-\varphi^*v_{\varphi,{t_2}}$ we get that 
\begin{itemize}
\item[3)] $\|u^*_{{\varphi,t_1}}-u^*_{\varphi,t_2}\|_{L^2(\tilde L,\tilde\sigma+s\psi)}\leq c_1c_2\cdot 2^{kn}\cdot d_0(t_1,t_2)$.
\end{itemize}
A calculation similar to that above gives that  
$$
\|u_{{\varphi,t_2}}-u_{\varphi,t_1}\|_{L^2(\tilde L|_{D_r},\tilde\sigma)} \leq
c\cdot(\frac{1}{2})^{n(s-2(k+1))/2}\cdot d_0(t_1,t_2).
$$
\end{proof}

\subsection{Smoothness term by term}
\begin{lemma}\label{termbyterm}
Let $U\subset\subset\mathbb D$ and let $\tilde v_t$ be a smooth family of smooth $(0,1)$-forms 
with coefficients if $\tilde L$, each one supported in $U$.  For each $t\in\mathbb T$
let $\tilde u_t$ be the $L^2(\tilde\sigma+s\psi)$-minimal solution to the equation $\overline\partial u=\tilde v_t$.
Then $\tilde u_t$ is a smooth family of sections of $\tilde L$.  
\end{lemma}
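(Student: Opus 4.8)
\emph{Plan of proof.} The first thing to notice is that the weight is in fact independent of $t$: since $g\circ\pi\colon\mathbb D\times\mathbb T\to X$ is the map $(\zeta,t)\mapsto f(\zeta)$, we have $\tilde\sigma=\pi^*g^*\sigma_*=(g\circ\pi)^*\sigma_*$, a function of $\zeta$ alone, and likewise $\tilde L=(g\circ\pi)^*L_*$ is trivialised over $\mathbb D$ by a $t$-independent cocycle. Trivialising, the lemma reduces to the following statement: given a fixed weight $\varphi:=\tilde\sigma+s\psi$ on $\mathbb D$ with $dd^c\varphi\geq c_0\,dV$ for some $c_0>0$ (the positivity hypothesis), a fixed $U\Subset\mathbb D$, and scalar $(0,1)$-forms $v_t=a_t\,d\overline\zeta$ with $a_t$ supported in $U$ and $(\zeta,t)\mapsto a_t(\zeta)$ smooth on $\mathbb D\times\mathbb T$, the $L^2(e^{-\varphi})$-minimal solution $\tilde u_t$ of $\overline\partial\tilde u_t=v_t$ is smooth as a function of $(\zeta,t)$. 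The key structural point is that, $\varphi$ being fixed, the Hilbert space $H:=L^2(\mathbb D,e^{-\varphi}\,dV)$, its closed subspace $H_0$ of holomorphic elements, and the orthogonal projection $H\to H_0^\perp$ are all independent of $t$; hence $T\colon v\mapsto(\text{the }L^2(e^{-\varphi})\text{-minimal solution of }\overline\partial u=v)$ is a \emph{fixed} operator, it is \emph{linear} (the minimal solution is the unique solution orthogonal to $H_0$, and a difference of solutions lies in $H_0$), and it is \emph{bounded} with $\|T\|\leq c_0^{-1/2}$ by H\"ormander.

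Next I would show that $t\mapsto\tilde u_t$ is a $\mathcal C^\infty$ map $\mathbb T\to H$. Indeed $\tilde u_t=Tv_t$, and $t\mapsto v_t$ is $\mathcal C^\infty$ into $H$: since the $a_t$ are jointly smooth and uniformly supported in $U$, all difference quotients and derivatives $\partial_t^\alpha a_t$ converge uniformly on $\mathbb D$, and as $e^{-\varphi}$ is integrable on $U$, uniform convergence of functions supported in $U$ forces convergence in $H$. Composing with the bounded linear map $T$ gives that $\partial_t^\alpha\tilde u_t=T(\partial_t^\alpha v_t)$ exists and is continuous (in fact smooth) in $t$; moreover each $\partial_t^\alpha\tilde u_t$ is itself the $L^2(e^{-\varphi})$-minimal solution of $\overline\partial(\partial_t^\alpha\tilde u_t)=\partial_t^\alpha v_t$, where $\partial_t^\alpha v_t$ is again jointly smooth and uniformly supported in $U$. (Joint continuity of $(\zeta,t)\mapsto\tilde u_t(\zeta)$ is already the content of Theorem \ref{continuous} in the degenerate case of a constant family of bundles; what follows supplies the transverse derivatives.)

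The last step upgrades this to joint smoothness by interior $\overline\partial$-regularity. Fix $K\Subset\mathbb D$ and $\chi\in\mathcal C^\infty(\mathbb D)$ with compact support and $\chi\equiv1$ on a neighbourhood of $K$. For each $t$ the function $\chi\tilde u_t$ has compact support in $\mathbb C$ and $\overline\partial(\chi\tilde u_t)=(\overline\partial\chi)\tilde u_t+\chi v_t$ is an $L^1$ form with compact support, so $\chi\tilde u_t=\mathcal C[(\overline\partial\chi)\tilde u_t]+\mathcal C[\chi v_t]$ with $\mathcal C$ the Cauchy transform; hence on $K$
\[
\tilde u_t=\mathcal C[\chi v_t]+\mathcal C[(\overline\partial\chi)\tilde u_t].
\]
The term $\mathcal C[\chi v_t]$ is smooth on $\mathbb C\times\mathbb T$ because $\chi v_t$ is jointly smooth with support in the fixed compact set $U$ (differentiate under the integral sign). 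In $\mathcal C[(\overline\partial\chi)\tilde u_t]$ the integration runs over $\mathrm{supp}\,\overline\partial\chi$, which is disjoint from $K$, so for $\zeta\in K$ the Cauchy kernel is smooth in $\zeta$; and $t\mapsto\tilde u_t|_{\mathrm{supp}\,\overline\partial\chi}$ is $\mathcal C^\infty$ into $L^2$ by the previous step, so this term too is smooth on a neighbourhood of $K$ times $\mathbb T$. Hence $\tilde u_t$ is smooth on $K\times\mathbb T$, and $K$ being arbitrary, it is a smooth family on $\mathbb D\times\mathbb T$; applying the same to each $\partial_t^\alpha\tilde u_t=T(\partial_t^\alpha v_t)$ controls all mixed derivatives locally uniformly.

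The hard part is precisely this interplay in the last step. One cannot manufacture a globally $L^2$, jointly smooth particular solution by an explicit formula, because the Cauchy transform of $v_t$ need not lie in $L^2(e^{-\varphi})$ near $\partial\mathbb D$ on account of the factor $(1-|\zeta|^2)^{-s}$ in $e^{-\varphi}$; so the $t$-regularity has to be obtained abstractly from boundedness and linearity of the canonical-solution operator $T$ on $H$, while the explicit Cauchy representation may only be invoked on compact subsets of $\mathbb D$. A minor point to verify en route is that $e^{-\tilde\sigma}$ is locally integrable, so that compactly supported data do lie in $L^2$ of the (possibly singular) weight; this is implicit in the hypotheses of Theorem \ref{suspension}.
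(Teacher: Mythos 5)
Your proposal is correct and follows essentially the same route as the paper: the transverse derivatives are obtained by applying the (fixed, bounded, linear) minimal-solution operator to the $t$-derivatives of the data — the paper phrases this as showing the difference quotients $(\tilde u_{t+\delta\gamma}-\tilde u_t)/\delta$ are themselves minimal solutions converging by H\"ormander's estimate — and the leaf-direction regularity is then recovered from the Cauchy integral formula plus Cauchy estimates. Your write-up is more explicit than the paper's (in particular about the $t$-independence of $\tilde L$ and $\tilde\sigma+s\psi$ and about the interior-regularity bootstrap on compacta), but the underlying argument is the same.
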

\begin{proof}
Let $t_0\in\mathbb T$ and let $\gamma_t$ be a continuous vector field on a neighborhood $\Omega$
of $t_0$.  For each $t\in\Omega$ let $\tilde v_t^{\gamma_t}$ be the $(0,1)$-form obtained 
by differentiating $\tilde v_t$ with respect to $\gamma_t$.  Then $\tilde v_t^{\gamma_t}$ is 
a continuous family of $(0,1)$-forms.  Note that if $\gamma_{t,s}$ is 
a continuous family of vector fields parametrized by $s$, then $\tilde v_t^{\gamma_{t,s}}$
is continuous also in $s$.  \

For each $t$ let $\tilde u^{\gamma_t}_t$ be the $L^2(\tilde\sigma+s\psi)$-minimal solution 
to the equation $\overline\partial u=\tilde v_t^{\gamma_t}$.   Then $\tilde u^{\gamma_t}_t$
is a continuous family of sections.   We claim that at any point $(\zeta,t)$
we have that $\gamma_t(\tilde u_t)(\zeta)=\tilde u^{\gamma_t}_t(\zeta)$.  \

Let $\gamma:(-1,1)\rightarrow\mathbb T$ be a smooth curve with $\gamma(0)=t$
and $\gamma'(0)=\gamma_t$.  Let $\delta\gamma$ denote the point $\gamma(\delta)$.
Clearly 
$$
\overline\partial(\frac{\tilde u_{t+\delta\gamma}-\tilde u_t}{\delta})=\frac{\tilde v_{t+\delta\gamma}-v_t}{\delta},
$$
and it is also the $L^2(\tilde\sigma+s\psi)$-minimal solution.  Since the right-hand side 
converges uniformly it follows by H\"{o}rmanders estimate that 
$$
\frac{\tilde u_{t+\delta\gamma}-\tilde u_t}{\delta}
$$
converges uniformly to $\tilde u_t^{\gamma_t}$. \

Finally we could check the continuity of the disk-derivatives by the same method, or 
we could produce another solution whose disk-derivatives vary continuously
using the Cauchy-formula, and then conclude by the Cauchy-estimates.   

\end{proof}

\subsection{Smoothness of the sum}

Fix $t\in\mathbb T$.  We will show that $u$ is smooth near $\mathbb D\times\{t\}$. 
Using local coordinates on $\mathbb T$ we may assume that the point $t$ is the origin
in $\mathbb R^n$ and that the metric is the usual one (since everything is comparable).  Let $\gamma$ be a vector of norm one in $\mathbb R^n$.  
We need
to estimate 
$$
\|\frac{\tilde u_{t+\delta\gamma}-\tilde u_t}{\delta}\|_{L^2(\tilde L|_{\mathbb D_r},\tilde\sigma)},
$$
where $\tilde u_t$ is defined by 1. above.  \

Using Lemma \ref{lipschitz} and following the arguments for continuity we see that 
\begin{align*}
\|\frac{\tilde u_{t+\delta\gamma}-\tilde u_t}{\delta}\|_{L^2(\tilde L|_{\mathbb D_r},\tilde\sigma)}
& \leq\underset{\varphi^{-1}(0)\in A_n,n\leq N}{\sum}\|\frac{\tilde u_{\varphi,t+\delta\gamma}-\tilde u_{\varphi,t}}{\delta}\|_{L^2(\tilde L|_{\mathbb D_r},\tilde\sigma)}\\
& + \underset{n>N}{\sum}c\cdot(\frac{1}{2})^{n(s-2(k+2))/2},
\end{align*}
for any $N\in\mathbb N$.   Note also that all but a finite number of $(\tilde u_{t+\delta\gamma}-\tilde u_t)$-s are holomorphic on $\mathbb D_r$.  Smoothness 
follows by Lemma \ref{termbyterm}, and the proof of Theorem \ref{suspension}
is complete. 
$\hfill\square$

\begin{remark}
It is also possible to solve $\overline\partial_b$ on suspensions over tori.  In that 
case it can be done more explicitly by following the above procedure, but using 
a weighted Cauchy integral formula for solving $\overline\partial$:
$$
u(z):=\frac{-1}{\pi\cdot z^k}\underset{\mathbb C}{\int\int}\frac{v(\zeta)\zeta^k}{\zeta-z}d\zeta\wedge d\overline\zeta,
$$
on the universal cover $\mathbb C\rightarrow\mathbb C/\Gamma$. 
\end{remark}

\section{Proof of Theorem \ref{furstenberg}}\label{embedding}

In \cite{FornaessSibonyWold} we proved with Sibony that there 
exists a smooth suspension over a compact Riemann surface of 
genus two, which is a minimal lamination supporting
uncountably many extremal closed laminated currents which are mutually singular. 
The lamination is of real transverse dimension two.  It therefore 
suffices to prove the following: 

\begin{theorem}
Let $g:Y\rightarrow X$ be a $\mathcal C^1$-smooth suspension over a 
compact Riemann surface $X$ of genus $g_X\geq 2$.   Then 
$Y$ is projective.  
\end{theorem}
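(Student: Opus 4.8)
The plan is to mimic the Kodaira embedding theorem in the laminated setting, using Theorem~\ref{suspension} as the substitute for the classical $L^2$ estimates with transverse control. First I would produce, on $X$ itself, a positive line bundle $L_*\to X$ with a smooth metric $\sigma_*$ of strictly positive curvature; then pull it back to $L=g^*L_*\to Y$ with metric $\sigma=g^*\sigma_*$. Because $g$ maps each leaf locally biholomorphically onto $X$ and $X$ is Poincar\'e-hyperbolic with bounded geometry, the leafwise curvature of $\sigma$ is bounded below by a positive multiple of the leafwise Poincar\'e form; hence for $s$ large enough $dd^c_b(\tilde\sigma+s\psi)>0$ on the universal covers, which is exactly the hypothesis of Theorem~\ref{suspension}. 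Replacing $L$ by a high tensor power $L^{\otimes p}$ I get plenty of positivity, and Theorem~\ref{suspension} provides $\mathcal C^1$-smooth solutions of $\overline\partial_b u = v$ with transverse smoothness for $(0,1)$-forms $v$ with coefficients in $L^{\otimes p}$.

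Next I would run the standard bump-function construction for sections. Given a point $y_0\in Y$ and a prescribed $1$-jet along the leaf through $y_0$, choose a local holomorphic frame and a cutoff $\chi$ supported in a flow box around $y_0$; form the smooth (non-holomorphic) section $\chi\cdot(\text{local jet data})$, take $v:=\overline\partial_b(\chi\cdot(\cdots))$, which is a smooth $(0,1)$-form with coefficients in $L^{\otimes p}$, supported away from $y_0$, and solve $\overline\partial_b u=v$ by Theorem~\ref{suspension}. For $p$ large the $L^2$-smallness of $u$ near $y_0$ (coming from the strong positivity, i.e.\ from the extra factor that a large tensor power provides in the H\"ormander constant) forces $u(y_0)=0$ and $d_b u(y_0)=0$ along the leaf; then $\chi\cdot(\cdots)-u$ is a global $\mathcal C^1$ section of $L^{\otimes p}$, holomorphic along leaves, with the prescribed leafwise $1$-jet at $y_0$. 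Doing this at finitely many points (compactness of $Y$) and combining with separation of transverse directions — here I would invoke that a suspension's transverse structure is governed by the group action and that $g:Y\to X$ together with leafwise-holomorphic sections already separates the $X$-direction, while points in the same fiber $Y_x\cong\mathbb T$ are separated by sections whose transverse variation is controlled by Lemma~\ref{lenghtlift} and the transverse smoothness in Theorem~\ref{suspension} — I obtain finitely many global $\mathcal C^1$ sections $s_0,\dots,s_M$ of $L^{\otimes p}$, leafwise holomorphic, giving a map $\Phi:Y\to\mathbb P^M$ which is a leafwise-holomorphic, $\mathcal C^1$ immersion and injective.

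Finally I would check that $\Phi$ is a laminated embedding into $\mathbb P^M$ and then cut down to $\mathbb P^5$ (projecting generically, possible once the transverse real dimension is two and $Y$ is compact, exactly as in the argument leading to Theorem~\ref{furstenberg}), so that $Y$ is realized as a $\mathcal C^1$-smooth Riemann surface lamination in projective space with each leaf an immersed Riemann surface — i.e.\ $Y$ is projective. The main obstacle I anticipate is the \emph{transverse} separation and immersion: solving $\overline\partial_b$ gives leafwise-holomorphic data and $\mathcal C^1$ transverse regularity, but to separate two points of the \emph{same} leaf's closure lying in different leaves, and to get the differential of $\Phi$ injective in the transverse ($\mathbb T$-) directions, one must exploit that for suspensions the transverse coordinate is essentially locally constant for leafwise-holomorphic objects, so that pulling back from $X$ plus a partition-of-unity/bump argument in the transverse variable — made rigorous via Lemma~\ref{lenghtlift} controlling the deck action on $\mathbb T$ — actually produces sections distinguishing nearby leaves; verifying that this bump-in-$t$ construction stays within the $L^2_{loc}$ and $\mathcal C^1$ hypotheses of Theorem~\ref{suspension} is the delicate point, the rest being the routine Kodaira machinery.
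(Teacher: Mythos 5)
Your overall strategy (Kodaira-type embedding via Theorem~\ref{suspension}) matches the paper, but there are two genuine gaps at exactly the points where the real work lies.

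First, your mechanism for prescribing jets is wrong as stated: you claim that the $L^2$-smallness of the correction $u$ near $y_0$, coming from a large tensor power in the H\"ormander constant, ``forces $u(y_0)=0$ and $d_bu(y_0)=0$.'' Smallness of an $L^2$ norm never forces exact vanishing of a value or a differential at a point. The paper's mechanism is different and is the reason Theorem~\ref{suspension} is stated for \emph{singular} metrics: one adds to $k\sigma$ a weight $\tau=g^*\tau_*$ where $\tau_*$ has Lelong number $2$ at the chosen base points and $dd^c\tau_*\geq -c_1\omega_*$, chooses $k$ with $kc_2>c_1+4s$ so that positivity survives, and then the requirement $u\in L^2_{loc}(k\sigma+\tau)$ forces $u$ to vanish to order two in the leaf direction along the transversals over those points. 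Without a singular weight (or some equivalent device) your bump sections $\chi\cdot(\cdots)-u$ need not have the prescribed leafwise $1$-jets, and the separation/immersion argument in the leaf direction does not close.

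Second, the transverse direction, which you correctly flag as the delicate point, is left without an actual construction: sections pulled back from $X$ are constant on the fibers $Y_x\cong\mathbb T$ and cannot separate points of the same fiber or immerse $\mathbb T_x$, and Lemma~\ref{lenghtlift} only controls the deck action on the transverse metric; it produces no sections. The paper's resolution is to start from smooth sections $\xi_1,\dots,\xi_m$ of $L^{\otimes k}|_{\mathbb T_x}$ embedding the compact manifold $\mathbb T_x$ into projective space, extend them constantly along leaves near $\mathbb T_x$ with a cut-off on the base, and correct by solving $\overline\partial_b u_j=\overline\partial_b\xi_j$ with a weight singular along all of $\mathbb T_x$; the Lelong-number argument makes $u_j$ vanish to second order along $\mathbb T_x$, so $s_j=\xi_j-u_j$ has the same full differential as $\xi_j$ there, and the $\mathcal C^1$ transverse regularity of Theorem~\ref{suspension} propagates the embedding to nearby transversals. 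Your ``bump in $t$'' idea gestures at the extension step but omits the singular-weight correction that preserves the transverse jet, which is the whole point. (A minor remark: projecting down to $\mathbb P^5$ is only needed for Theorem~\ref{furstenberg}, not for projectivity itself.)
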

\begin{proof}
We need to find a line bundle $L\rightarrow Y$ where we can find enough 
sections to separate points and to have non-vanishing differentials.  We will do this by 
constructing local sections and solving $\overline\partial_b$ with singular weights.  \

Fix $s$ according to Theorem \ref{suspension}, and define 
$$
\tilde\omega (\zeta):=\frac{1}{(1-|\zeta|^2)^2}dV, \zeta\in\mathbb D.
$$
Since $\tilde\omega$ is $\Gamma$-invariant it defines a volume form $\omega_*:=f_*\tilde\omega$ on $X$.  \

Note that there exists a constant $c_1>0$ such that the following holds 
\begin{itemize}
\item[a.] for every pair $x_1,x_2\in X$ there exists a function $\tau_*\in L^2_{loc}(X)$
with $\nu(\tau_*,x_j)=2$ for $j=1,2$, and $dd^c\tau_*\geq -c_1\cdot\omega_*$. 
\end{itemize}  
Here $\nu$ denotes the Lelong number.  The claim follows by compactness of $X$ and the construction of such a $\tau$ for fixed $x_1,x_2$ (see 2.2.1 of \cite{FornaessSibonyWold} for details).  \

Since $X$ is projective there exists a line bundle $L_*\rightarrow X$
with a smooth metric $\sigma_*$ such that 
\begin{itemize}
\item[b.] $dd^c\sigma_*\geq c_2\cdot\omega_*$, for some $c_2>0$.
\end{itemize}  
At this point we fix $N\in\mathbb N$ such that 
\begin{itemize}
\item[c.] $N\cdot c_2>c_1 + 4s$.
\end{itemize}
Define $\tilde\sigma:=f^*\sigma_*$ and $\sigma:=g^*\sigma_*$, and for any such $\tau_*$ we define $\tilde\tau:=f^*\tau_*$ and $\tau:=g^*\tau_*$. By a. and b. 
we have that 
$$
dd^c(k\cdot\tilde\sigma + \tilde\tau + s\psi)\geq (k\cdot c_2 - c_1 - 4s)\cdot\tilde\omega
, 
$$
and so by $c.$ we have that for all $k\geq N$ we may solve $\overline\partial_b$
for $L^2$-sections of $L^{\otimes k}$ over $Y$, and the solutions will 
be in $L^2_{loc}(k\sigma + \tau)$ as well as being $\mathcal C^1$-smooth on 
the total space.  The main point is that this will force the solutions to vanish 
to order two in the leaf direction along the transversals over the points $x_1$ and 
$x_2$.  We now sketch the steps to produce sufficiently many sections of $L^{\otimes k}$ to produce an embedding. 
\begin{itemize}
\item[i.] \emph{Non-vanishing differentials in the leaf direction:}  Here we 
can use sections of the bundle $L^{\otimes k}_*$.  Given a point $x\in X$ let 
$\xi_1$ be a smooth section of  $L^{\otimes k}_*$, holomorphic near $x$, 
which in local coordinates ($x=0$) looks like $\xi_1(z)=z + O(|z|^2)$.  Let 
$\xi_2$ be a section that looks like $\xi_2(z)=1+O(|z^2|)$.   Let $v_j:=\overline\partial\xi_j$, solve $\overline\partial u_j=v_j$ with metric which 
is singular at $x$, and define $s_j:=\xi_j-u_j$.  The quotient 
$s_1/s_2$ has a non-vanishing differential in the leaf direction on a full neighborhood 
of the transversal $\mathbb T_x$.  By compactness we cover all of $X$.   
\item[ii.] \emph{Separate points over different points in the base:}  Use a similar construction as i. to separate transversals $\mathbb T_{x_j}$ and $\mathbb T_{x_i}$ for $x_i\neq x_j$.  Use compactness and i. to cover everything.  

\item[iii.] \emph{The transversal direction:}  For any given $x\in X$ start with 
smooth sections $\xi_1,...,\xi_m$ of $L^{\otimes k}|_{\mathbb T_x}$ providing 
an embedding of $\mathbb T_x$ into projective space.  Extend the sections 
$\xi_j$ constantly along leaves near $\mathbb T_x$ and use a cut-off function 
on the base to extend each $\xi_j$ to a section of $L^{\otimes k}$.   Define 
$v_j:=\overline\partial_b\xi_j$, solve $\overline\partial_b u_j=v_j$ with a weight 
which is singular along $\mathbb T_x$, and define $s_j:=\xi_j-u_j$.   Then 
each $s_j$ will have the same differential as $\xi_j$ along $\mathbb T_x$, and 
since they are all $\mathcal C^1$-smooth, they provide an embedding of 
all transversals near $\mathbb T_x$.  By compactness we cover all transversals. 
\end{itemize}
Note that it was only for iii. we used Theorem \ref{suspension}.

\end{proof}

\section{Proof of the Main Theorem}

Let 
$$
\phi_{\alpha}=(z_\alpha,t_\alpha):U_{\alpha}\rightarrow\mathbb D\times\mathbb T
$$
be a flow-box.  Let $\mathbb T_0$ denote the transversal $\phi_{\alpha}^{-1}(\{0\}\times\mathbb T)$.  By the continuity of the Kobayashi metric 
we may choose a constant $r>0$ such that 
\begin{itemize}
\item[a.] if $\gamma:[0,1]\rightarrow\mathcal L_t$ is a smooth curve such that,
$\gamma(0),\gamma(1)\in U_\alpha, z_\alpha(\gamma(0))=z_\alpha(\gamma(1))=0$
and $t_\alpha(\gamma(0))\neq t_\alpha(\gamma(1))$,  
then the Kobayashi length of $\gamma$ is greater than $r$.  
\end{itemize}
Simply let 
$r$ be the infimum of the Kobayashi radii of the plaques $\mathcal L_{\alpha,t_\alpha(t)}$ for $t\in\mathbb T_0$.  
Similarly we may choose $r$ such that 
\begin{itemize}
\item[b.] if $\gamma:[0,1]\rightarrow\mathcal L_t$ is a 
non-trivial curve with $\gamma(0)=\gamma(1)=t\in\mathbb T_0$, 
then the length of $\gamma$ is greater than or equal to $r$.
\end{itemize}
Choose $0<r'<r$, and we get that  
\begin{itemize}
\item[c.] the K-disk $\triangle_{K,r'}(t)$ in $\mathcal L_t$ of 
radius $r'$ centered at $t$ is contained in $\mathcal L_{\alpha,t_\alpha(t)}$ for all $t\in\mathbb T_0$. 
\end{itemize}
By compactness it is enough to solve $\overline\partial_b u=v$ when $v$ is compactly 
supported in 
$\underset{t\in\mathbb T_0}{\cup}\triangle_{K,r'}(t). $

\

Let $\tilde\xi(t)$ be the vector field $\phi_\alpha^*(\partial/\partial\zeta|_{\zeta=0})$, 
and let $\xi(t)$ be the corresponding vector field normalized by the Kobayashi metric.  
For each $t\in\mathbb T_0$ let $f_t:\mathbb D\rightarrow\mathcal L_t$ be the universal covering map with 
$f_t(0)=t$ and $f_t'(0)=\xi(t)$.
Let $L_t$
denote the line bundle $L_t:=f_t^*L$ over $\mathbb D$.  Let $\sigma_t$
denote the metric $f_t^*\sigma$.  By continuity of the Kobayashi metric we have that 
$$
dd^c\sigma_t(\zeta)=\frac{g_t(\zeta)}{(1-|\zeta|^2)^2}dV, 
$$
where $g_t$ is bounded from below $g_t(\zeta)\geq c>0$ independently of $t$ and $\zeta$.
By passing to a power of $L$ we may assume that $g_t(\zeta)\geq c>20$, and so 
$dd^c(\sigma_t + 5\psi)$ is strictly positive independently of $t$, \emph{i.e.}, we 
may solve $\overline\partial$ with estimates using the metric $\sigma_t+5\psi$.  
Let $v_t$
denote the form $v_t:=f_t^*v$.   \

For each $t\in\mathbb T$ let $E_t$ denote the discrete set of points $E_t:=\{f_t^{-1}(\mathbb T_0)\}$, and note that  
the Kobayashi distance between any two points in $E_t$ is greater than $r$.  
For each point $\zeta\in E_t$ let $U_{t,\zeta}$ denote the connected component 
of $f_t^{-1}(\mathcal L_{\alpha,t_\alpha(f(\zeta))})$ containing $\zeta$.  Let $v_{t,\zeta}:=v_t|_{U_\zeta}$
and note that $v_{t,\zeta}$ is compactly supported in $U_{t,\zeta}$.  In fact,  
$v_{t,\zeta}$ is compactly supported the K-disk of radius $r'$ centered at $\zeta$.  \

For each $n=0,1,2,...$, let $E_{t,n}$ denote the set 
$$
E_{t,n}=\{\zeta\in E_t:1-(\frac{1}{2})^n\leq |\zeta|<1-(\frac{1}{2})^{n+1}\}.
$$

By Lemma \ref{distconst} the following holds: 

\begin{itemize}
\item[d.] there exists a constant $c>0$ such that 
$\sharp(E_{t,n})\leq c\cdot 2^n$ for all $n$, and for all $t$.    
\end{itemize}

For each $\zeta\in E_t$ let $\varphi_{t,\zeta}$ denote \emph{the} element 
$\varphi_{t,\zeta}\in\Aut_{hol}\mathbb D$ with $\varphi_{t,\zeta}(0)=\zeta$
and $(f_t\circ\varphi_{t,\zeta})'(0)=\xi(f_t(\zeta))$.  Let $v_{t,\zeta}^*$ 
denote the form $\varphi_{t,\zeta}^*v_{t,\zeta}$, let $u_{t,\zeta}^*$
denote the $L^2(\varphi_{t,\zeta}^*(\sigma_t) + 5\psi)$-minimal solution 
to the equation $\overline\partial u=v^*_{t,\zeta}$, and 
let $u_{t,\zeta}:=(\varphi_{t,\zeta})_*u^*_{t,\zeta}$.  We define 
\begin{itemize}
\item[$(*)$] $u_t:=\underset{\zeta\in E_t}{\sum}u_{t,\zeta}$, 
\end{itemize}
and then finally 
\begin{itemize}
\item[$(**)$] $u:=(f_t)_*(\underset{\zeta\in E_t}{\sum}u_{t,\zeta})$ on $\mathcal L_t$.  
\end{itemize}
We need to check that 
\begin{itemize}
\item[i)] the sum $(*)$ converges for each $t\in\mathbb T_0$, 
\item[ii)] the push-forward $(**)$ is well defined, and 
\item[iii)] the solutions vary continuously between leaves.  
\end{itemize}

$i)$ and $iii)$ are proved essentially as in the special case of a suspension over a genus 
$g$ surface, $g\geq 2$, but we need Theorems \ref{kmetric} and \ref{continuous}.  
For convergence we need to note that, due 
to continuity of the Kobayashi metric, there exists a constant $c_1>0$
such that 
\begin{itemize}
\item[e.] $\|v^*_{t,\zeta}\|_{L^2(L,\sigma_t^*+5\psi)}\leq c_1$, 
\end{itemize}
for all $\zeta$ and all $t$.  A similar calculations as before then gives, for a fixed $0<r<1$, that,    
\begin{itemize}
\item[f.] $\underset{\zeta\in E_t}{\sum}\|u_{t,\zeta}\|_{L^2(L_t|_{\mathbb D_r},\sigma_t)}\leq C\cdot\underset{n}{\sum}\underset{\zeta\in E_{t,n}}{\sum}(\frac{1}{2})^{n(s-2)/2}
\cdot \|v^*_{t,\zeta}\|_{L^2(L,\sigma_t^*+5\psi)}$, 
\end{itemize}
where the constant $C$ is independent of $t$.   Then d. and e. gives convergence.  \

For continuity note first that, by the same calculation as the one leading to f., we get for any $N\in\mathbb N$ that
\begin{itemize}
\item[g.] $\underset{n\geq N, \zeta\in E_{t,n}}{\sum}\|u_{t,\zeta}\|_{L^2(L_t|_{\mathbb D_r},\sigma_t)}\leq C\cdot\underset{n\geq N}{\sum}\underset{\zeta\in E_{t,n}}{\sum}(\frac{1}{2})^{n(s-2)/2}
\cdot \|v^*_{t,\zeta}\|_{L^2(L,\sigma_t^*+5\psi)}$.
\end{itemize}
So using d. and e. it follows that for any $\epsilon>0$ there exists an $N\in\mathbb N$ such that 
\begin{itemize}
\item[h.] $\underset{n\geq N, \zeta\in E_{t,n}}{\sum}\|u_{t,\zeta}\|_{L^2(L_t|_{\mathbb D_r},\sigma_t)}<\epsilon$, 
\end{itemize}
for all $t$.  So proving convergence is reduced to proving that finite sums converge.  Since 
 $f_{t}\underset{t \rightarrow t_0}{\longrightarrow} f_{t_0}$
u.o.c. in $\mathbb D$ (Theorem \ref{kmetric}), this is easily reduced to showing that 
$u_{t,0}\rightarrow u_{t_0,0}$ as $t\rightarrow t_0$.  This is the content of 
Theorem \ref{continuous}.   \

\medskip

$ii)$ 
Let $t_1, t_2\in\mathbb T_0$ both be contained in the same leaf $\mathcal L_t$ (we allow them to be the same point).    
Let $\psi$ be any element $\psi\in Aut_{hol}\mathbb D$ such that $f_{t_2}=f_{t_1}\circ\psi$.  We need to show that $u_{t_2}=\psi^*u_{t_1}$.

Let $\zeta\in E_{t_2}$ and note that $v_{t_2,\zeta}=\psi^*v_{t_1,\psi(\zeta)}$, and that

$\varphi_{t_2,\zeta}=\psi^{-1}\circ\varphi_{t_1,\psi(\zeta)}$. 
We have that $v_{t_2,\zeta}^*=\varphi_{t_2,\zeta}^*v_{t_2,\zeta}$, and so 
$$
v_{t_2,\zeta}^*=(\psi^{-1}\circ\varphi_{t_1,\psi(\zeta)})^*(\psi^*v_{t_1,\psi(\zeta)})
= \varphi_{t_1,\psi(\zeta)}^*v_{t_1,\psi(\zeta)}=v^*_{t_1,\psi(\zeta)}.
$$
We get 
$$
u_{t_2,\zeta}=(\varphi_{t_2,\zeta})_*u_{t_1,\psi(\zeta)}^*=\psi^{-1}_*(\varphi_{t_1,\psi(\zeta)})_*u_{t_1,\psi(\zeta)}^*=\psi^*u_{t_1,\psi(\zeta)}.  
$$
From this we see that $u_{t_2}=\psi^*u_{t_1}$.  This shows that $u_{t}$ is 
well defined on the quotient and that it is independent of the choice of 
transversal point.

\medskip

\noindent John Erik Forn\ae ss\\
University of Michigan\\
Mathematics Department\\
2074 East Hall, 530 Church Street\\
Ann Arbor, Michigan 48109-1043\\
USA\\
fornaess@umich.edu\\

\noindent Erlend Forn\ae ss Wold\\
Universitetet i Oslo\\
Matematisk Institutt\\
Postboks 1053, Blindern\\
NO-0316 Oslo\\
Norway\\
erlendfw@math.uio.no\\

 \end{document}